\theoremstyle{plain}
\newtheorem{theorem}{Theorem} 
\newtheorem{definition}{Definition}
\newtheorem{condition}{Condition}
\newtheorem{example}{Example}
\newtheorem{lemma}{Lemma}
\newtheorem{remark}{Remark}
\def\BState{\State\hskip-\ALG@thistlm}
\def\iid{\overset{\textnormal{iid}}{\sim}} 
\let\dolarger\relsize} 
\def\dolarger#1{\larger[#1]}} 
\newcommand*\@@bigtimes[2]{\vphantom{\prod} 
  \vcenter{\hbox{\dolarger{4}$\m@th#1\mkern-2mu\times\mkern-2mu$}}} 
\newcommand*\bigtimes{\mathop{\mathpalette\@@bigtimes\relax}\displaylimits} 
\def\G{\mathbb{G}}\def\H{\mathbb{H}}\def\N{\mathbb{N}}\def\R{\mathbb{R}}\def\T{\mathbb{T}}\def\Z{\mathbb{Z}}\def\1{\mathbbm{1}}
\def\Dcal{\mathcal{D}}\def\Fcal{\mathcal{F}}\def\Hcal{\mathcal{H}}\def\Lcal{\mathcal{L}}\def\Qcal{\mathcal{Q}}
\newcommand{\eps}{\varepsilon}
\def\supp{\textnormal{supp}}
\title{\bf Semiparametric Bernstein--von Mises theorems for reversible diffusions}
\author{Matteo Giordano and Kolyan Ray \\ \\ University of Turin and Imperial College London}
\date{} 
\begin{document}

\maketitle

\abstract{
We establish a general semiparametric Bernstein--von Mises theorem for Bayesian nonparametric priors based on continuous observations in a periodic reversible multidimensional diffusion model. We consider a wide range of functionals satisfying an approximate linearization condition, including several nonlinear functionals of the invariant measure. Our result is applied to Gaussian and Besov-Laplace priors, showing these can perform efficient semiparametric inference and thus justifying the corresponding Bayesian approach to uncertainty quantification. Our theoretical results are illustrated via numerical simulations.\\

\noindent \textit{Keywords}: Bernstein--von Mises, multidimensional diffusions, reversibility, semiparametric inference, uncertainty quantification. 
}

%



%

\tableofcontents

%
%
%
%
%

    \section{Introduction}
\label{Sec:Introduction}

Let $(X_t = (X_t^1,\dots,X_t^d): t \geq 0)$ be the multidimensional diffusion process arising as the solution to the SDE 
\begin{equation}\label{Eq:SDE}
    dX_t = \nabla B(X_t)dt + dW_t, \qquad t\ge 0, \qquad X_0=x_0\in\R^d,
\end{equation}
where $(W_t = (W_t^1,\dots,W_t^d): \ t\ge0)$ is a standard $d$-dimensional Brownian motion and $B:\R^d \to \R$ is a twice-continuously differentiable scalar potential function. We observe the continuous trajectory $X^T = (X_t: 0 \leq t \leq T)$ over a time horizon $T>0$, and consider statistical inference for low-dimensional functionals of the potential $B$ when this is modelled using a Bayesian nonparametric prior, leading to a semiparametric inference problem.

The SDE \eqref{Eq:SDE} describes the position of a particle diffusing in a potential energy field that exerts a force directed towards its local extrema, see Figure \ref{Fig:RevDiff}. By a classic result of Kolmogorov, the drift taking the form of a gradient vector field $\nabla B$ is equivalent to time reversibility of the process $X$ (e.g.~\cite{bakry2014}, p.~46). Reversible systems are widespread in the natural sciences \cite{K40,S80,PS10,pinski2012}, and one must thus model the scalar potential $B$ to correctly incorporate such physical dynamics. Furthermore, $B$ typically has a strong physical interpretation and estimating various aspects of it is often of significant interest. This supports directly modelling $B$, which is the approach we take here, assigning to $B$ a Bayesian prior.

We study statistical inference for low-dimensional functionals $\Psi(B)$ satisfying an approximate linearization condition, which includes several interesting nonlinear functionals. In the reversible setting \eqref{Eq:SDE}, there is a one-to-one correspondence between the potential $B$ and invariant measure $\mu_B$, see \eqref{Eq:invariant_measure} below, so that one can further embed functionals of the invariant measure into this framework. This allows us to treat several new and physically interesting cases, such as the entropy or integrated square root of the invariant measure.

The natural Bayesian approach is to assign a nonparametric prior to $B$ and consider the induced marginal posterior for $\Psi(B)$. We provide rigorous frequentist guarantees for this approach in the shape of a semiparametric Bernstein--von Mises (BvM) theorem as the time-horizon $T\to\infty$. It gives general conditions under which the marginal posterior for $\Psi(B)$ behaves asymptotically like a normal distribution centered at an efficient estimator of $\Psi(B)$ and with posterior variance equal to the inverse efficient Fisher information, which in model \eqref{Eq:SDE} can be expressed as the abstract solution to an elliptic PDE. This implies that the limiting covariances obtained coincide with the semiparametric information lower bounds for these estimation problems. In particular, such a result guarantees the validity of semiparametric Bayesian uncertainty quantification in the sense that posterior credible intervals for $\Psi(B)$ are also asymptotic frequentist confidence intervals of the correct level, see for instance \cite{castillo2012}. This is especially relevant since uncertainty quantification is a major motivation for using Bayesian methods in practice.

%

\begin{figure}[t]
\centering
\includegraphics[height=5cm,width=7cm]{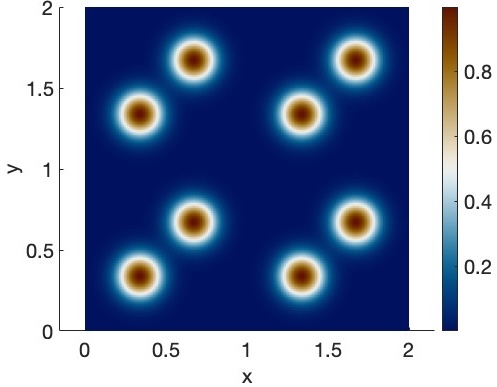}
\includegraphics[height=5cm,width=7cm]{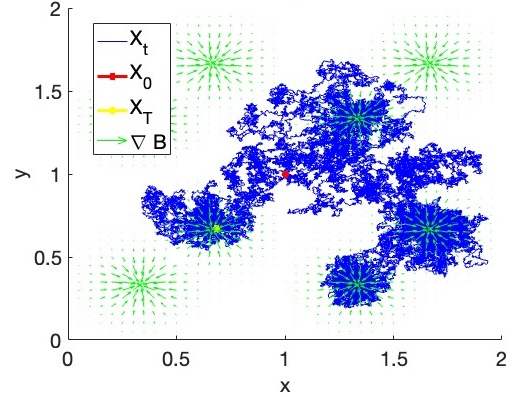}
\caption{Left: a periodic potential energy field $B$. Right: a continuous trajectory $(X_t: 0\le t\le T)$ started at $X_0=(1,1)$ and run until time $T=1$.}
\label{Fig:RevDiff}
\end{figure}

We apply our general theorem to two classes of priors: Gaussian processes and Besov-Laplace priors. Gaussian processes are widely used in diffusion models, partly due to computational advances in applying them to `real world' discrete data \cite{PPRS12,RBO13,GS14,vdMS17,BRO18,giordano2024statistical}. While often the standard Bayesian approach for such problems, Gaussian priors are known to be unsuited to modelling spatially inhomogeneous functions \cite{agapiou2021rates,giordano2022inability,AW21}, which motivated using heavier
tailed priors, such as Besov-Laplace priors, in the inverse problems and imaging communities \cite{LSS09,DHS12,KLNS12,BH15,ADH20,AW21}. The latter priors are better suited to modelling such inhomogeneities due to several attractive properties, such as edge-preservation and sparse solution representations, whilst also maintaining a log-concave structure amenable to posterior sampling. Our results provide first statistical guarantees for semiparametric inference using both these prior classes in a reversible diffusion setting. We further illustrate the applicability of results in numerical simulations.

We study statistical inference in the $T\to \infty$ regime, where one can use the average behaviour of the particle trajectory for inference. This requires a suitable notion of statistical ergodicity to ensure the particle exhibits enough recurrence to use long-time averages. We ensure this by following \cite{PPRS12,PSvZ13,vWvZ16,A18,GSMH19,nickl2020nonparametric,giordano2022nonparametric} in restricting to periodic potentials $B$. Periodicity simplifies several technical arguments, notably the elliptic PDE techniques, leading to a cleaner exposition of the main statistical ideas. Note that a periodic potential $B$ still implies the corresponding (periodised) diffusion is reversible (\cite{GSMH19}, Proposition 2), which maintains the modelling link between reversibility and potential functions behind our approach. For further discussion on alternative approaches to recurrence, such as confining potentials or reflecting boundary conditions, see Section 3.2 of \cite{giordano2022nonparametric}.

Bayesian nonparametric theory for drift estimation of diffusions is well-studied in dimension $d=1$ (e.g.~\cite{vdMeulen2006,PSvZ13,vWvZ16,NS17,A18}). In general dimensions $d\geq 2$, there has been recent progress on posterior contraction rates for continuous observations \cite{nicklray2020,giordano2022nonparametric} and discrete data \cite{nickl2024,hoffmann2025}, though little is known regarding the performance of Bayesian uncertainty quantification. For $d\leq 3$, Nickl and Ray \cite{nickl2020nonparametric}  obtain nonparametric BvM results for certain \textit{non-reversible} drift vector fields, which amongst other things imply semiparametric BvMs for smooth linear functionals. However, their approach relies on specific properties of truncated Gaussian series product priors on the drift $b=(b_1,\dots,b_d)$, which crucially uses that the priors for each coordinate are independent and supported on the same finite-dimensional projection spaces. Since product priors for $b$ draw gradient vector fields $b = \nabla B$ with probability zero, these cannot model reversibility. Their approach thus deals with a fundamentally different physical model, whereas here we deal with reversible dynamics, general dimension $d\geq 1$ and possibly nonlinear functionals. 
Convergence guarantees have also been obtained in multidimenions for various frequentist methods, typically kernel methods, see for instance \cite{DR07,S13,S15,S16,S18,aeckerle2022} and the references therein. For $d=1$, frequentist semiparametric inference for functionals of the invariant measure has been studied in \cite{aeckerle2022} and, for moment-type functionals, in \cite{KutoyantsYoshida2007}. We are not aware of analogous frequentist results for $d\geq 2$.

Our proof builds on the semiparametric BvM ideas of Castillo and Rousseau \cite{castillo2015}, extending these to the reversible diffusion model \eqref{Eq:SDE}. Elliptic PDE theory plays a key role in our results and proofs, for instance using mapping properties of the generator of the underlying semigroup
(via the Poisson equation and It\^o’s formula) to access martingale techniques to deal with the dependent data. Moreover, the representor $\psi_L$ of the linearization of the functional $\Psi(B)$ in the information (LAN) norm in model \eqref{Eq:SDE} can typically only be expressed implicitly as the solution to an elliptic PDE, whose regularity properties therefore play a central role in the analysis of semiparametric inference in this setting. In particular, the usual ``prior invariance condition'' \cite{castillo2012,castillo2015}, which reflects how well the prior aligns with the model and functional and is known to play an key role in the quality of semiparametric Bayesian inference, involves regularity properties of the solution map $\psi_L$.

The present \textit{reversible} diffusion setting also shares many connections with nonlinear inverse problems, where Bayesian methods have found significant use \cite{Stuart2010,nickl2023bayesian}. Modelling $B$ is equivalent to modelling the invariant measure (see \eqref{Eq:invariant_measure}), which leads to a nonlinear regression problem, see \cite{giordano2022nonparametric} for discussion regarding posterior contraction rates. In our context, this connection can be seen via the semiparametric information bounds in Section \ref{Sec:general_BvM}, which involve the solution to an elliptic PDE, see \cite{giordano2020bernstein} for a similar situation in inverse problems. In contrast, in the non-reversible case studied in \cite{nickl2020nonparametric}, such bounds are simpler weighted $L^2$-norms.


\paragraph{Notation.} Let $\T^d = (0,1]^d$ denote the $d$-dimensional torus, $L^p(\T^d)$ the usual Lebesgue spaces on $\T^d$, $\langle \cdot,\cdot \rangle_2$ the inner product on $L^2(\T^d)$ and
$$\dot{L}^2(\T^d) = \left\{ f\in L^2(\T^d): \int_{\T^d} f(x) dx = 0 \right\},$$
the subspace of centered $L^2(\T^d)$ functions. For an invariant measure $\mu_0$,  set $\langle f,g \rangle_{\mu_0} = \int f g d\mu_0$ with corresponding norm $\|f\|_{\mu_0}^2 = \int_{\T^d} f^2 d\mu_0$. For $f\in L^2(\T^d)$, define the empirical process 
$$\G_T[f] = \frac{1}{\sqrt{T}}\int_0^T f(X_s) dx - \int_{\T^d} f(x) d\mu_0(x),$$
where $\mu_0$ is the invariant measure of $X$ on $\T^d$, see \eqref{Eq:invariant_measure} below.

Let $C(\T^d)$ be the space of continuous functions on $\T^d$ equipped with $\|\cdot\|_\infty$. For $s>0$, denote by $C^s(\T^d)$ the usual H\"older space of $\lfloor s \rfloor$-times continuously differentiable functions whose $\lfloor s \rfloor^{\text{th}}$-derivative is $(s- \lfloor s \rfloor)$-H\"older continuous. We let $H^s(\T^d)$, $s\in \R$, denote the usual $L^2$-Sobolev spaces on $\T^d$, defined by duality when $s<0$. We further define the Sobolev norms $\|f\|_{W^{1,q}} = \|f\|_q + \sum_{i=1}^d \|\partial_{x_i}f\|_q$, $q\ge1$, and note that $\|\cdot\|_{W^{1,2}}$ is equivalent to $\|\cdot\|_{H^1}$.
	
Let $\{\Phi_{l r}: ~ l \in \{-1,0\} \cup \N, ~r=0,\dots,\max(2^{l d}-1,0)\}$
be an orthonormal tensor product wavelet basis of $L^2(\T^d)$, obtained from a periodised Daubechies wavelet basis of $L^2(\T)$, which we take to be $S$-regular for $S\in \N$ large enough; see Section 4.3. in \cite{ginenickl2016} for details. For $0 \leq t \leq S$, $1\le p,q\le\infty$, define the Besov spaces via their wavelet characterisation:
$$
	B^t_{p q}(\T^d)=\left\{ f\in L^p(\T^d): \ \|f\|_{B^t_{p q}}^q
	:=\sum_{l}2^{q l \left(t+\frac{d}{2}-\frac{d}{p}\right)}
	\left(\sum_r|\langle f,\Phi_{l r}\rangle_2 |
	^p\right)^\frac{q}{p}<\infty \right\},
$$
replacing the $\ell_p$ or $\ell_q$-norm above with $\ell_\infty$ if $p=\infty$ or $q=\infty$, respectively. Recall that $H^t(\T^d)=B^t_{22}(\T^d)$ and the continuous embedding  $C^s(\T^d)\subseteq B^s_{\infty\infty}(\T^d)$ for $s\ge0$, see Chapter 3 in \cite{ST87}. We sometimes suppress the dependence of the function spaces on the underlying domain, writing for example $B^t_{pq}$ instead of $B^t_{pq}(\T^d)$. We also employ the same function space notation for vector fields $f=(f_1,\dots,f_d)$. For instance, $f \in H^s = (H^s)^{\otimes d}$ will mean each $f_i \in H^s$ and the norm on $H^s$ is $\|f\|_{H^s} = \sum_{i=1}^d \|f_i\|_{H^s}$. Similarly, $\|\nabla g\|_p = \sum_{i=1}^d \|\partial_{x_i} g\|_p$. Finally, for a function space $X\subset L^2$, we denote by $\dot{X}=X\cap \dot{L^2}$

	We write $\lesssim$, $\gtrsim$ and $\simeq$ to denote one- or two-sided inequalities up to multiplicative constants that may either be universal or `fixed' in the context where the symbols appear. We also write $a_+ = \max(a,0)$ and $a \vee b=\max(a,b)$ for real numbers $a,b$. The $\eps$-covering number of a set $\Theta$ for a semimetric $d$, denoted $N(\Theta,d,\eps)$, is the minimal number of $d$-balls of radius $\eps$ needed to cover $\Theta$.

%
%
%
%
%


\section{Main results}
\label{Sec:BayesInf}

\subsection{Reversible diffusions with periodic drift and Bayesian inference}

Consider the SDE \eqref{Eq:SDE} with $B:\R^d \to \R$ a twice-continuously differentiable and one-periodic potential, i.e.~$B(x+m) = B(x)$ for all $m\in\Z^d$. There exists a strong pathwise solution $X = (X_t:t \geq 0)$ to \eqref{Eq:SDE} on the path space $C([0,\infty),\R^d)$, see Chapter 24 and 39 in \cite{B11}. We denote the law of the corresponding observed process $X^T = (X_t: 0 \leq t \leq T)$ by $P_B = P_B^T$, omitting the depending on the initial condition $X_0 = x_0$ since this does not affect our results.

Using the periodicity, we can analogously consider $B$ as a function on $\T^d$. Since the law of $X$ in \eqref{Eq:SDE} depends on $B$ only through the drift $\nabla B$, potentials differing only by additive constants yield the same law. To ensure identifiability, we thus consider the unique equivalence class with $\int_{\T^d} B(x) dx = 0$, namely $B \in \dot{L}^2(\T^d).$

In the present model, the process lives on all of $\R^d$, but will \textit{not} be globally recurrent in this space. However, the periodicity of $B$ means that the values of $(X_t)_t$ modulo $\Z^d$ encode all the relevant statistical information about $\nabla B$ contained in the trajectory $X^T$. The periodic model thus effectively restricts the diffusion to the bounded state space $\T^d$, providing a notion of statistical recurrence that suffices to ensure ergodicity in the asymptotic limit $T\to\infty$. Specifically, one can define an invariant measure on $\T^d$ since it holds that (arguing as in the proof of Lemma 6 of \cite{nicklray2020}):
\begin{align*}
    \frac{1}{T}\int_0^T \varphi (X_s) ds \to^{P_B} \int_{\T^d} \varphi(x) d\mu_B(x), \qquad \forall \varphi \in C(\T^d),
\end{align*}
as $T \to\infty$, where $\mu_B$ is a uniquely defined probability measure on $\T^d$ and we identify $\varphi$ with its periodic extension on the left-side of the last display. In the reversible diffusion model \eqref{Eq:SDE}, the potential function $B$ uniquely defines the invariant measure via its density, also denoted $\mu_B$:
\begin{align}\label{Eq:invariant_measure}
        \mu_B(x) = \frac{e^{2B(x)}}{\int_{\T^d}e^{2B(z)}dz}, \qquad x\in\T^d,
\end{align}
see p.~45-47 of \cite{bakry2014}. As usual, $\mu_B$ can equivalently be identified as the solution to the PDE $L_B^* u = 0$ for $L_B^*$ the $L^2$-adjoint of the generator $L_B$ defined in eq.~\eqref{Eq:generator} in the Supplement \cite{supp}.


The log-likelihood of $B\in \dot{C}^2(\T^d)$ is given by Girsanov's theorem (e.g.~\cite{B11}, Section 17.7):
\begin{align}\label{Eq:likelihood}
        \ell_T(B):=\log \frac{dP_B}{dP_{B=0}}(X^T)=-\frac{1}{2}\int_0^T\|\nabla B(X_t)\|^2 dt + \int_0^T\nabla B(X_t).dX_t,
\end{align}
where $P_{B=0}$ is the law of a $d$-dimensional Brownian motion $(W_t: 0 \leq t \leq T)$. Assigning to $B$ a possibly $T$-dependent prior $\Pi = \Pi_T$, which is supported on $\dot{C}^2(\T^d)$, the posterior is given by Bayes formula:
$$
    d\Pi(B|X^T)=\frac{e^{\ell_T(B)}d\Pi(B)}{\int_{\dot{C}^2(\T^d)}e^{\ell_T(B')}d\Pi(B')},
    \qquad B\in \dot{C}^2(\T^d).
$$
For a given functional $\Psi:\dot{C}^2(\T^d) \to \R$, the Bayesian uses the marginal posterior distribution, whose law equals the pushforward $\Pi(\cdot|X^T) \circ \Psi^{-1}$, for inference on $\Psi(B)$. More concretely, we can generate posterior samples $\Psi(B)|X^T$ from $B \sim \Pi(\cdot|X^T)$ via evaluations of the functional. This provides a principled Bayesian approach to inference on $\Psi(B)$ that, as we will show below, can be optimal from an information theoretic point of view. Note that by the injectivity of the map $B \mapsto \mu_B$ in \eqref{Eq:invariant_measure}, functionals $\Phi(\mu_B)$ of the invariant measure can equivalently be considered as functionals of $B$, and hence are contained within this framework.

We will study the behaviour of the marginal posterior distribution assuming the data $X^T \sim P_{B_0}$ is generated from a `ground truth' law following the SDE \eqref{Eq:SDE} with potential $B_0$. We will often write $P_0 := P_{B_0}$ for notational convenience, differentiating this from the notation $P_{B=0}$ used in the likelihood \eqref{Eq:likelihood}.

%
%
%
%
%

\subsection{A general semiparametric Bernstein--von Mises theorem}\label{Sec:general_BvM}

We provide here a general result giving conditions on the prior and functional under which the marginal posterior provides asymptotically optimal semiparametric inference. Let $\Psi:\dot{C}^2(\T^d) \to \R$ be a one-dimensional functional admitting an expansion about $B_0$ of the form
\begin{equation}\label{Eq:psi_exp}
    \Psi(B) = \Psi(B_0) + \langle \psi, B-B_0 \rangle_2 + r(B,B_0),
\end{equation}
where $\psi \in L^2(\T^d)$ and $r(B,B_0)$ is a remainder term of size $o(T^{-1/2})$ for $B$ in a neighbourhood of $B_0$ in a sense made precise below. Since $B,B_0 \in \dot{C}^2(\T^d)$, we may take $\psi$ to satisfy $\int_{\T^d} \psi = 0$ since this does not change \eqref{Eq:psi_exp} (e.g.~by considering Parseval's theorem in the Fourier basis).
If $\Psi$ is a linear functional, then simply $r(B,B_0) = 0$. Thus $\psi$ is the Riesz representor of the linearization of $\Psi$ in the $L^2(\T^d)$-inner product.

The optimal asymptotic variance in semiparametric estimation theory is connected to the information or local asymptotic normality (LAN) norm induced by the statistical model (e.g.~Chapter 25 of \cite{vdvaart1998}). In the present reversible diffusion setting, the LAN inner product is $\langle B,\bar{B} \rangle_L := \langle \nabla B, \nabla \bar{B} \rangle_{\mu_0}$ (see Lemma \ref{lem:LAN} in the Supplement \cite{supp}), giving a corresponding functional expansion 
\begin{equation*}
    \Psi(B) = \Psi(B_0) + \langle \psi_L, B-B_0 \rangle_L + r(B,B_0).
\end{equation*}
Define the second order elliptic partial differential operator $A_{\mu}: H^2(\T^d) \to L^2(\T^d)$ given by
\begin{align}\label{Eq:A_mu}
    A_{\mu}u:=\nabla\cdot(\mu\nabla u )
    = \mu \Delta u + \nabla \mu . \nabla u.
\end{align}
The functional representors are then related by $\psi_L = A_{\mu_0}^{-1}[\psi]$, i.e.~$\psi_L$ solves the elliptic PDE $A_{\mu_0} \psi_L = \psi$. Thus, the corresponding information bound in this model is $\|\psi_L\|_L^2 = \|\nabla \psi_L\|_{\mu_0}^2 = \|\nabla A_{\mu_0}^{-1}\psi\|_{\mu_0}^2.$ However, for most functionals $\Psi$, especially nonlinear ones, we typically have access to $\psi$ rather than $\psi_L$ (see below for examples), since the latter is usually only defined implicitly as the solution to an elliptic PDE. We therefore consider the expansion \eqref{Eq:psi_exp} to make our regularity conditions concrete, deducing the regularity of the corresponding $\psi_L$ by elliptic regularity theory.

Assuming $\Psi$ is differentiable in a suitable sense, a sequence of estimators $\hat{\Psi}_T$ is asymptotically efficient for estimating $\Psi(B)$ at the true parameter $B_0$ if
\begin{equation}\label{Eq:efficient_est}
\begin{split}
\hat{\Psi}_T &= \Psi(B_0) + \frac{1}{T} \int_0^T \nabla \psi_L(X_t).dW_t  + o_{P_0}(T^{-1/2}) \\
&= \Psi(B_0) + \frac{1}{T} \int_0^T \nabla A_{\mu_0}^{-1}\psi (X_t).dW_t + o_{P_0}(T^{-1/2}), 
\end{split}
\end{equation}
see equation (25.22) in \cite{vdvaart1998}. Using the martingale central limit theorem, the sequence $\sqrt{T}(\hat{\Psi}_T-\Psi(B_0))$ is then asymptotically normal with mean zero and variance $\|\psi_L\|_{\mu_0}^2 = \|\nabla A_{\mu_0}^{-1}\psi\|_{\mu_0}^2$, which is the best possible in a local minimax sense.

We write $\mathcal{L}_\Pi(\sqrt{T}(\Psi(B)-\hat{\Psi}_T)|X^T)$ for the marginal posterior distribution of $\sqrt{T}(\Psi$ $(B)-\hat{\Psi}_T)$, where $\hat{\Psi}_T$ is any random sequence satisfying \eqref{Eq:efficient_est}. The semiparametric Bernstein--von Mises theorem states that this distribution asymptotically resembles a centered normal distribution with variance $\|\nabla A_{\mu_0}^{-1}\psi\|_{\mu_0}^2$. We now make this statement precise via the bounded Lipschitz distance $d_{BL}$ on probability distributions on $\R$ (see Chapter 11 of \cite{dudley2002}) before stating our general result.

\begin{definition}
Let $X^T = (X_t:0\leq t \leq T)$ denote an observation from the SDE \eqref{Eq:SDE} with potential $B_0$, whose distribution we denote by $P_{B_0}=P_0$. We say the posterior satisfies the semiparametric Bernstein--von Mises (BvM) for a functional $\Psi$ satisfying expansion \eqref{Eq:psi_exp} if, for $\hat{\Psi}_T$ satisfying \eqref{Eq:efficient_est} and as $T \to \infty,$
$$d_{BL} \left( \mathcal{L}_\Pi(\sqrt{T}(\Psi(B)-\hat{\Psi}_T)|X^T) , N(0,\|\nabla A_{\mu_0}^{-1}\psi\|_{\mu_0}^2) \right) \to^{P_0} 0.$$
\end{definition}

\begin{theorem}[Semiparametric Bernstein--von Mises]\label{thm:BvM}
Let $\Pi=\Pi_T$ be a prior for $B$ that is supported on $\dot{C}^2(\T^d)$. Suppose  $B_0 \in \dot{C}^{(d/2+1+\kappa)\vee 2}(\T^d)$ for some $\kappa>0$, and let $\Psi:\dot{C}^2(\T^d) \to \R$ be a functional satisfying the expansion \eqref{Eq:psi_exp} with representor $\psi \in \dot{H}^t(\T^d)$ with $t>(d/2-1)_+$. For $1\le p\le 2$ and $2\le q\le \infty$ such that $1/p+1/q=1$, assume there exist measurable sets $\Dcal_T$ satisfying $\Pi(\Dcal_T|X^T) \overset{P_0}{\longrightarrow}1$ and such that
\begin{equation}\label{Eq:D_T_conditions}
\begin{split}
        \Dcal_T \subseteq  & \big\{ B\in \dot C^2(\T^d):\|\nabla B - \nabla B_0\|_{p}\le M \eps_T, ~ \|B-B_0\|_{H^{d/2+1+\kappa}}\leq \zeta_T,\\
        & \qquad \|B\|_{H^{d+1+\kappa}} \leq M, ~ |r(B,B_0)| \leq \xi_T/\sqrt{T} \big\}
\end{split}
\end{equation}
for some $M>0$ and $\eps_T,\zeta_T,\xi_T \to 0$ with $\sqrt{T}\eps_T\to\infty$. Let $\gamma_T\in \dot{H}^{d/2+1+\kappa}(\T^d)$ be a sequence of fixed functions such that, as $T\to\infty$,
\begin{align}\label{Eq:gamma_T_conditions}
    \|\gamma_T\|_{H^{d/2+1+\kappa}}=O(1);
    \qquad  \|A^{-1}_{\mu_0}\psi - \gamma_T\|_{W^{1,q}} = o(1/(\sqrt{T}\eps_T)),
\end{align}
where $A_{\mu_0}$ is the second-order operator \eqref{Eq:A_mu}. For $u\in\R$, define the perturbations
\begin{align*}
    B_u = B - u \gamma_T / \sqrt{T}.
\end{align*}
If for every $u\in\R$ in a neighbourhood of 0,
\begin{align}\label{Eq:change_of_measure}
    \frac{\int_{\Dcal_T} e^{\ell_T(B_u)} d\Pi(B)}{\int_{\Dcal_T} e^{\ell_T(B)} d\Pi(B)} \overset{P_0}{\longrightarrow}1
\end{align}
as $T\to\infty$, then the semiparametric Bernstein--von Mises holds for $\Psi$.
\end{theorem}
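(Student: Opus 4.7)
The plan is the Castillo--Rousseau Laplace transform approach \cite{castillo2015}. Convergence of the centered posterior to $N(0,\sigma^2)$ with $\sigma^2 = \|\nabla A_{\mu_0}^{-1}\psi\|_{\mu_0}^2$ in bounded Lipschitz distance follows, via a standard subsequence plus uniform integrability argument, from pointwise convergence in $P_0$-probability of the posterior Laplace transform $L_T(u) := \E^{\Pi}[e^{u\sqrt T(\Psi(B)-\hat\Psi_T)} \mid X^T]$ to $e^{u^2 \sigma^2/2}$ for every $u$ in a neighbourhood of $0$. Expressing $L_T(u)$ as the ratio $\int e^{u\sqrt T(\Psi(B)-\hat\Psi_T)}e^{\ell_T(B)}d\Pi(B) / \int e^{\ell_T(B)}d\Pi(B)$, the assumption $\Pi(\Dcal_T \mid X^T) \to 1$ allows restriction of both integrals to $\Dcal_T$ modulo $o_{P_0}(1)$ terms. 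With the shift $B_u = B - u\gamma_T / \sqrt T$, I split the numerator's exponent as $u\sqrt T (\Psi(B) - \hat\Psi_T) + \ell_T(B) = R_T(u,B) + \ell_T(B_u)$, where $R_T(u,B) := u\sqrt T(\Psi(B) - \hat\Psi_T) + \ell_T(B) - \ell_T(B_u)$. If one can show that $R_T(u,B) = u^2 \sigma^2/2 + o_{P_0}(1)$ uniformly over $B \in \Dcal_T$, then $e^{R_T(u,B)}$ factors out, reducing $L_T(u)$ to $e^{u^2 \sigma^2/2}(1+o_{P_0}(1))$ times the ratio $\int_{\Dcal_T} e^{\ell_T(B_u)} d\Pi / \int_{\Dcal_T} e^{\ell_T(B)} d\Pi$, which is $1 + o_{P_0}(1)$ by the change-of-measure condition \eqref{Eq:change_of_measure}, yielding the required limit.

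The core of the proof is therefore to verify that $R_T(u,B) = u^2 \sigma^2/2 + o_{P_0}(1)$ uniformly on $\Dcal_T$. Girsanov's formula \eqref{Eq:likelihood} under $P_0$ (where $dX_t = \nabla B_0 dt + dW_t$) yields that $\ell_T(B) - \ell_T(B_u)$ decomposes as a cross term $-(u/\sqrt T)\int_0^T \nabla(B-B_0)\cdot\nabla\gamma_T(X_t)\, dt$, a quadratic term $(u^2/(2T))\int_0^T \|\nabla\gamma_T(X_t)\|^2 dt$, and a stochastic integral $(u/\sqrt T)\int_0^T \nabla\gamma_T(X_t)\cdot dW_t$. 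Adding the expansion $u\sqrt T(\Psi(B)-\hat\Psi_T) = u\sqrt T\langle\psi, B-B_0\rangle_2 + u\sqrt T\, r(B,B_0) - (u/\sqrt T)\int_0^T \nabla\psi_L(X_t) \cdot dW_t + o_{P_0}(u)$ coming from \eqref{Eq:psi_exp} and \eqref{Eq:efficient_est} (with $\psi_L = A_{\mu_0}^{-1}\psi$), the two stochastic integrals combine into $(u/\sqrt T)\int_0^T \nabla(\gamma_T - \psi_L)\cdot dW_t$, which is $o_{P_0}(1)$ by It\^o's isometry using $\|\psi_L - \gamma_T\|_{W^{1,q}} = o(1/(\sqrt T \eps_T))$ and $\sqrt T\eps_T \to \infty$. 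The remainder $u\sqrt T\, r(B,B_0)$ is $O(u \xi_T) = o_{P_0}(u)$ on $\Dcal_T$, while ergodicity together with $\gamma_T \to \psi_L$ in $W^{1,2}$ gives $(u^2/(2T))\int_0^T \|\nabla\gamma_T\|^2(X_t)\, dt \to u^2 \sigma^2/2$. For the cross term, ergodicity replaces the time average by $u\sqrt T\langle\nabla(B-B_0), \nabla\gamma_T\rangle_{\mu_0}$; integration by parts on $\T^d$ combined with the identity $A_{\mu_0}\psi_L = \psi$ then produces cancellation (up to sign conventions) of the leading term $u\sqrt T\langle\psi, B-B_0\rangle_2$ coming from \eqref{Eq:psi_exp}, leaving a residual $u\sqrt T\langle\nabla(B-B_0), \nabla(\gamma_T - \psi_L)\rangle_{\mu_0}$ that is $o(u)$ by H\"older's inequality using $\|\nabla(B-B_0)\|_p \leq M\eps_T$ on $\Dcal_T$ and \eqref{Eq:gamma_T_conditions}.

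The main obstacle is making these ergodic approximations both quantitative and uniform in $B \in \Dcal_T$, with error $o(1/\sqrt T)$ on the cross term. Standard ergodic theorems only give pointwise convergence, so the natural tool is to apply It\^o's formula to the solution $v$ of the Poisson equation $L_{B_0} v = f - \int f\, d\mu_0$ for the integrands $f = \nabla(B-B_0)\cdot\nabla\gamma_T$ and $f = \|\nabla\gamma_T\|^2$, converting each centered time average into a martingale plus lower-order boundary terms. Classical elliptic Schauder and $L^2$-regularity estimates for the generator $L_{B_0}$ then bound the norms of $v$ in terms of the regularity of $f$, which is where the smoothness assumptions $B_0 \in \dot C^{(d/2+1+\kappa)\vee 2}$ and $\psi \in \dot H^t$ with $t > (d/2-1)_+$ become essential; uniformity in $B \in \Dcal_T$ then follows from the Sobolev bounds $\|B\|_{H^{d+1+\kappa}} \leq M$ and $\|B-B_0\|_{H^{d/2+1+\kappa}} \leq \zeta_T$ that define $\Dcal_T$, possibly combined with a covering argument on $\Dcal_T$ and Bernstein-type concentration for the resulting martingales.
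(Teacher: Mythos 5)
Your proposal is correct and follows essentially the same route as the paper: localization to $\Dcal_T$, the Castillo--Rousseau Laplace-transform argument with the Girsanov/LAN decomposition of $\ell_T(B)-\ell_T(B_u)$, cancellation of $\sqrt{T}\langle\psi,B-B_0\rangle_2$ against the cross term via integration by parts and $A_{\mu_0}\psi_L=\psi$ plus H\"older with \eqref{Eq:gamma_T_conditions}, uniform control of the centered time averages through the Poisson equation for $L_{B_0}$ combined with a covering/chaining bound, and the change-of-measure condition \eqref{Eq:change_of_measure} for the remaining ratio. The only cosmetic difference is that the paper first centers at $\widetilde{\Psi}_T=\Psi(B_0)+\tfrac{1}{T}\int_0^T\nabla\gamma_T(X_t).dW_t$ and separately shows $\hat{\Psi}_T-\widetilde{\Psi}_T=o_{P_0}(T^{-1/2})$, whereas you merge the two stochastic integrals directly; these are equivalent.
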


The condition \eqref{Eq:D_T_conditions} requires the posterior to concentrate on sets around the true $B_0$ on which one can perform a LAN expansion of the likelihood with uniform control of the remainder terms. These conditions can be verified using general tools for proving posterior contraction as in \cite{ghosal2017}, which have been applied in the multivariate diffusion setting with continuous observations in \cite{nickl2020nonparametric,giordano2022nonparametric}. The condition $r(B,B_0) = o(1/\sqrt{T})$ means the functional $\Psi(B)$ is approximately linear with expansion \eqref{Eq:psi_exp}, which nonetheless allows to cover several interesting nonlinear functionals.

Condition \eqref{Eq:change_of_measure} requires invariance of the prior for the full parameter $B$ under a shift $B_u = B-u\gamma_T/\sqrt{T}$ in the approximate least favourable direction $\gamma_T$, which should be close to the true least favourable direction $\psi_L = \nabla A_{\mu_0}^{-1} \psi$. This condition reflects how well the prior aligns with the model and functional, and if not satisfied may prevent the $\sqrt{T}$-rate in the semiparametric BvM theorem (see \cite{castillo2012,castillo2012b,ghosal2017} for further discussion). Considering an approximation $\gamma_T$ to $\nabla A_{\mu_0}^{-1} \psi$ allows to weaken the condition \eqref{Eq:change_of_measure} for concrete priors. Note that verifying \eqref{Eq:change_of_measure} for specific priors may impose additional smoothness conditions.

We next consider examples of functionals covered by Theorem \ref{thm:BvM}, including functionals of the invariant measure $\mu_B$. Proofs of the expansions \eqref{Eq:psi_exp} and remainder terms can be found in Section \ref{Sec:functionals} of the Supplement.

\begin{example}[Linear functionals]\label{Ex:linear_B}
    If $\Psi(B) = \int_{\T^d} B(x) a(x) dx$ for $a\in L^2(\T^d)$, then $\psi = a - \int_{\T^d} a$ and $r(B,B_0) = 0.$
\end{example}

\begin{example}[Square functional]\label{Ex:square}
    If $\Psi(B) = \int_{\T^d} B(x)^2 dx$, then $\psi = 2B_0$ and $r(B,B_0) = \|B-B_0\|_2^2.$
\end{example}

\begin{example}[Power functionals]\label{Ex:power}
    Let $\Psi(B) = \int B(x)^q dx$ for $q\geq 2$ an integer. Then $\psi = q[B_0^{q-1} - \int B_0^{q-1}]$ and for any $K,M>0$ and $\nu_T \to 0,$
    \begin{align}\label{Eq:remainder_text}
    \sup_{\substack{B,B_0:\|B\|_\infty, \|B_0\|_\infty \leq K \\ \| B -  B_0\|_2 \leq M \nu_T}}|r(B,B_0)| = O(\nu_T^2).
    \end{align}
\end{example}

\begin{remark}[Posterior contraction rates and remainder]\label{Rem:remainder}
For $B,B_0 \in \dot{L}^2(\T^d)$, i.e.~satisfying $\int B = \int B_0 = 0$, the Poincar\'e inequality implies $\|B-B_0\|_2 \leq C_d \|\nabla B - \nabla B_0\|_2$ (e.g.~p.~290 in \cite{evans2010}) for some $C_d>0$. One can thus replace the $L^2$-norm in the above remainder by $\|\nabla B-\nabla B_0\|_2$. When $p=2$, the other conditions in \eqref{Eq:D_T_conditions} together with \eqref{Eq:remainder_text} and the Sobolev embedding theorem imply that $|r(B,B_0)| = O(\eps_T^2)$ uniformly over $\Dcal_T$, i.e.~we may take $\xi_T = \sqrt{T}\eps_T^2$ (provided $\eps_T=o(T^{-1/4})$). If only a posterior contraction rate for $\|\nabla B - \nabla B_0\|_p$ is available, it may still be possible to exploit additional prior support properties to  deduce that $\|B - B_0\|_2\lesssim \nu_T$ for some $\nu_T =o(T^{-1/4})$, generally slower than $\eps_T$. This is done for Besov-Laplace priors in Section \ref{Sec:LaplPriors}.
\end{remark}

\begin{example}[Linear functionals of the invariant measure]\label{Ex:linear}
    Let $\Psi(B) = \int_{\T^d} \mu_B(x) \varphi(x)$ $ dx$ for $\varphi \in L^\infty(\T^d)$, where $\mu_B = e^{2B}/\int_{\T^d} e^{2B}$ is the invariant measure. Then $\psi =2\mu_{B_0}[\varphi - \int \mu_{B_0} \varphi]$ and the remainder term $r(B,B_0)$ satisfies \eqref{Eq:remainder_text}.
    Note that $\Psi$ is \textit{nonlinear} in $B$.
\end{example}

\begin{example}[Entropy of the invariant measure]\label{Ex:entropy}
    Let $\Psi(B) = \int_{\T^d} \mu_B(x)\log \mu_B(x) dx$. Then $\psi = 2\mu_{B_0}$ $ [\log \mu_{B_0} - \int \mu_{B_0} \log \mu_{B_0}]$ and the remainder term $r(B,B_0)$ satisfies \eqref{Eq:remainder_text}.
\end{example}

\begin{example}[Square-root of the invariant measure]\label{Ex:square_root}
    Let $\Psi(B) = \int_{\T^d}\sqrt{\mu_B(x)} dx$. Then $\psi = \mu_{B_0}[\frac{1}{\sqrt{\mu_{B_0}}}$ $ - \int \sqrt{\mu_{B_0}} ]$ and the remainder term $r(B,B_0)$ satisfies \eqref{Eq:remainder_text}.
\end{example}

\begin{example}[Power functionals of the invariant measure]\label{Ex:power_mu}
    Let $\Psi(B) = \int \mu_B(x)^q dx$ for $q\geq 2$ an integer. Then $\psi = 2\mu_{B_0} [q \mu_{B_0}^{q-1} - q \int \mu_{B_0}^q]$ and the remainder term $r(B,B_0)$ satisfies \eqref{Eq:remainder_text}.
\end{example}

The condition $r(B,B_0) \leq \xi_T/\sqrt{T}$ in \eqref{Eq:D_T_conditions} of Theorem \ref{thm:BvM} is thus satisfied in Examples \ref{Ex:square} - \ref{Ex:power_mu} as soon as $\nu_T = o(T^{-1/4})$ (in Example \ref{Ex:linear_B} it automatically follows with $\xi_T = 0$). For functionals of the invariant measure, the remainder condition \eqref{Eq:remainder_text} also ensures $\mu_B$ is bounded away from zero and infinity, so that the regularity of the various integrands matches that of $\mu_B$, and hence $B$. For example, it is known that the square root of an infinitely differentiable function near zero need not be more than $C^1$ in general \cite{bony2006} without additional assumptions, e.g.~\cite{ray2017}. In such situations, it is unclear if $\sqrt{T}$-rates are attainable since the functional $B \mapsto \Psi(B)$ may be non-differentiable, and non-regular settings can further lead to non-Gaussian posterior limits, see \cite{BochkinaGreen2014} for a parametric example. Taking $\mu_B$ bounded away from zero rules out such situations, where statistical estimation problems can behave qualitatively differently \cite{RaySchmidt2016,Patschkowski2016,RaySchmidt2019}. 

\begin{remark}[Functional regularity]
Employing a standard Bayesian nonparametric prior for $B$ and considering the induced marginal posterior for $\Psi(B)$ can be viewed as a ``plug-in'' method. In terms of \textit{uniformly} controlling linear functionals of $B$, one expects that a smoothness condition of the form $t>d/2-1$ as in Theorem \ref{thm:BvM} is sharp for such methods when $d\geq 2$, see Sections 2.4-2.5 of \cite{nickl2020nonparametric} and also \cite{CastilloNickl2013} for more general discussion. For certain models and functionals, Bayesian methods can be tailored to provide efficient semiparametric inference for lower regularities, for instance using targeted priors \cite{RaySzabo2019,RayVdVaart2020} or posterior corrections \cite{castillo2015,Yiu2025}, but this is beyond the scope of this article.
\end{remark}

\begin{remark}[Inference on invariant probabilities]
Taking $\varphi = 1_C$ for a measurable set $C \subseteq \T^d$ in Example \ref{Ex:linear} gives $\Psi(B) = \mu_B(C)$, the invariant probability of $C$. If $C \subseteq \T^d$ has finite perimeter, then $1_C \in BV(\T^d) \cap L^\infty(\T^d)$, where the first set consists of functions of bounded variation. Using standard approximation theory for BV functions (e.g. Theorem 5.3 in \cite{evansgariepy2015}), the smooth translation estimate $\|u(\cdot+h)-u\|_{L^1(\T^d)} \leq \|h\||Du|(\T^d)$ extends to functions $u\in BV(\T^d)$.
If also $u \in L^\infty (\T^d)$, then
$$\|u(\cdot+h)-u\|_{L^2}^2 \leq 2\|u\|_\infty \|u(\cdot+h)-u\|_{L^1} \leq 2 \|u\|_\infty \|h\| |Du|(\T^d).$$
By the modulus of smoothness characterisation of Besov spaces (\cite{ginenickl2016}, p.328--329), this implies $u\in B_{2\infty}^{1/2}(\T^d)$, and hence $u\in H^{1/2-\kappa}(\T^d)$ for any $\kappa>0$ by Proposition 4.3.6 of \cite{ginenickl2016}.
The usual multiplier inequality for Sobolev spaces then gives
$\|\psi\|_{H^{1/2-\kappa}} \lesssim \|\mu_{B_0}\|_{C^{1/2-\kappa}} (1+\|1_C\|_{H^{1/2-\kappa}})<\infty$,
so that the representor $\psi\in H^t$ for some $t>(d/2-1)_+$ when $d=1,2$, and hence our results apply to $\mu_B(C)$.
A process-level or nonparametric BvM for $\{\mu_B(C):C\in\mathcal{C}\}$, where $\mathcal{C}$ contains sets with uniformly bounded perimeter, would require making our arguments uniform over a class of functionals simultaneously, see \cite{nickl2020nonparametric} for such results in the non-reversible setting when $d\leq 3$.
\end{remark}

%
%
%

\subsection{Gaussian priors}
\label{Sec:GP}

We first apply our general theory to Gaussian priors, which are widely used for estimating the coefficients of SDEs \cite{PPRS12,PSvZ13,RBO13,GS14,vdMS17,BRO18,giordano2024statistical}. For continuously-observed multidimensional reversible diffusions, Giordano and Ray \cite{giordano2022nonparametric} showed that Gaussian priors are conjugate with explicitly available formulae for posterior inference, and derived minimax-optimal posterior contraction rates under suitable tuning of the hyperparameters. We now show that Gaussian priors also lead to efficient semiparametric inference for a large class of functionals of the potential.

As in \cite{giordano2022nonparametric}, we consider rescaled Gaussian priors constructed from a base probability measure satisfying the following mild regularity condition. For definitions and background information on Gaussian processes and measures, see \cite[Chapter 2]{ginenickl2016}, or \cite[Chapter 11]{ghosal2017}.

\begin{condition}\label{Condition:BaseGP}
For $s>d$
and some $\kappa>0$, let $\Pi_W=\Pi_{W,T}$ be a (possibly $T$-dependent) centred Gaussian Borel probability measure on the Banach space $\dot{C}(\T^d)$ that is supported on a separable (measurable) linear subspace of $\dot{H}^{d+1+\kappa}(\T^d)\cap \dot{C}^{(d/2+\kappa)\vee 2}(\T^d)$,
and assume that its reproducing kernel Hilbert space (RKHS) $(\H_W,\|\cdot\|_{\H_W})$ is continuously embedded into $\dot{H}^{s+1}(\T^d)$.
\end{condition}

Note that the above constant $\kappa>0$ can be arbitrarily small. Concrete choices of Gaussian priors satisfying Condition \ref{Condition:BaseGP} are given in Examples \ref{Ex:Matern} and \ref{Ex:WavSeries} below. For $W\sim \Pi_W$ with $\Pi_W$ satisfying Condition \ref{Condition:BaseGP}, we construct the rescaled Gaussian prior $\Pi=\Pi_T$ by taking the law of the random function
\begin{equation}
\label{Eq:RescaledGP}
    B(x):=\frac{W(x)}{T^{d/(4s+2d)}},
    \qquad x\in\T^d.
\end{equation}
By linearity, $\Pi$ is also a centred Gaussian Borel probability measure on $\dot{C}(T^d)$, with the same support as $\Pi_W$ and with RKHS $\H $ satisfying $\H  = \H_W$ and $\|\cdot\|_{\H } = T^{d/(4s+2d)}\|\cdot\|_{\H_W}$ (cf.~Exercise 2.6.5 in \cite{ginenickl2016}).

\begin{remark}[Rescaling]\label{Rem:Resc}
Rescaling by the diverging term $T^{d/(4s+2d)}$ in \eqref{Eq:RescaledGP} is a technique borrowed from the theoretical literature on Bayesian nonlinear inverse problems, e.g.~\cite{MNP20,giordano2020consistency,nickl2023bayesian}, which ensures the posterior for $B$ places most of its mass on sets of bounded higher-order smoothness. In that setting, it is typically needed to control stability estimates, uniformly over the bulk of the posterior. In the context of multidimensional reversible diffusions, this was similarly used in the posterior contraction rate analysis of \cite{giordano2022nonparametric} to control the nonlinearity of the map $B \mapsto \mu_B$ in \eqref{Eq:invariant_measure}.
Via the localising sets $\Dcal_T$ in \eqref{Eq:D_T_conditions}, Theorem \ref{thm:BvM} requires the posterior to contract about the truth in order to perform a LAN expansion of the likelihood, while the bounded Sobolev regularity of the posterior is needed to control stochastic bias terms in the likelihood (e.g.~Lemma \ref{Lem:EmpProc} in the Supplement \cite{supp}). The rescaling in \eqref{Eq:RescaledGP} ensures such conditions are satisfied. Deriving posterior contraction rates, let alone BvM results, for non-rescaled Gaussian priors in such nonlinear settings is currently an open problem.
\end{remark}

\begin{theorem}\label{Theo:GaussBvM}
Let $\Pi$ be the rescaled Gaussian prior from \eqref{Eq:RescaledGP} with $W\sim \Pi_W$ satisfying Condition \ref{Condition:BaseGP} for some $s>d$, some $\kappa>0$ and RKHS $\H_W$. Set $\eps_T = T^{-s/(2s+d)}$, suppose that $B_0\in \dot{H}^{s+1}(\T^d)\cap  \dot{C}^{(d+1/2+\kappa)\vee 2}(\T^d)$ and that there exists a sequence $B_{0,T}\in\H_W$ such that $\|B_{0,T}\|_{\H_W}=O(1)$ and $\|B_0 - B_{0,T}\|_{C^1}=O(\eps_T)$ as $T\to\infty$.

Let $\Psi:\dot{C}^2(\T^d) \to \R$ be a functional satisfying the expansion \eqref{Eq:psi_exp} with representor $\psi \in \dot{H}^t(\T^d)$ for some $t>(d/2-1)_+$ and remainder satisfying \eqref{Eq:remainder_text}, and assume  there exists a sequence $\gamma_T\in \H_W$ such that
\begin{equation}\label{Eq:GammaTNorms}
    \|\gamma_T\|_{H^{d/2+1+\kappa}}=O(1);\qquad
    \|\gamma_T\|_{H^{d+1+\kappa}}=o(\sqrt T);
    \qquad
    \|\gamma_T\|_{\H_W}=o(1/(\sqrt T\eps_T^2)),  
\end{equation}
as well as
\begin{equation}\label{Eq:GammaTApprox}
    \|A^{-1}_{\mu_0}\psi - \gamma_T\|_{H^1} = o(1/(\sqrt{T}\eps_T)).
\end{equation}
Then the semiparametric Bernstein--von Mises holds for $\Psi$.
\end{theorem}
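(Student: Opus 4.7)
The plan is to reduce to the general semiparametric BvM in Theorem~\ref{thm:BvM} with $p=q=2$. This entails constructing localising sets $\Dcal_T$ of the form \eqref{Eq:D_T_conditions} with $\Pi(\Dcal_T|X^T)\to^{P_0}1$ and verifying the change-of-measure property \eqref{Eq:change_of_measure} for the shift $h:=u\gamma_T/\sqrt T$, which by construction lies in the RKHS $\H = \H_W$ of $\Pi$.

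For the localisation, I would invoke the posterior contraction theorem for rescaled Gaussian priors in the reversible diffusion setting from \cite{giordano2022nonparametric}: it yields $\|\nabla B - \nabla B_0\|_2 \leq M\eps_T$ with posterior probability tending to one at rate $\eps_T = T^{-s/(2s+d)}$, together with $\|B\|_{H^{d+1+\kappa}} \leq M$, the latter a consequence of the rescaling factor $T^{d/(4s+2d)}$ concentrating the prior on balls of bounded higher-order Sobolev regularity. Interpolation between $\|B-B_0\|_{H^1}\lesssim \eps_T$ and $\|B\|_{H^{d+1+\kappa}} \lesssim 1$ then delivers some $\|B-B_0\|_{H^{d/2+1+\kappa}} \leq \zeta_T\to 0$. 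For the remainder, the Sobolev embedding $H^{d+1+\kappa}\hookrightarrow L^\infty$ ensures $\|B\|_\infty$ is uniformly bounded on $\Dcal_T$, while the Poincaré inequality (cf.~Remark~\ref{Rem:remainder}) gives $\|B-B_0\|_2 \leq C_d\|\nabla B-\nabla B_0\|_2 \lesssim \eps_T$. The remainder bound \eqref{Eq:remainder_text} then yields $|r(B,B_0)| = O(\eps_T^2)$ on $\Dcal_T$, so $\xi_T := \sqrt T \eps_T^2 \to 0$ since $s > d > d/2$.

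For the change of measure, using $\|\cdot\|_\H = T^{d/(4s+2d)}\|\cdot\|_{\H_W}$, the Cameron--Martin theorem gives
\begin{align*}
    \int_{\Dcal_T}e^{\ell_T(B_u)}d\Pi(B) = e^{-\|h\|_{\H}^2/2}\int_{\Dcal_T-h}e^{\ell_T(B) - \langle h, B\rangle_{\H^*}}d\Pi(B),
\end{align*}
where $\langle h,\cdot\rangle_{\H^*}$ denotes the Paley--Wiener extension and $\|h\|_\H^2 = u^2\eps_T^2\|\gamma_T\|_{\H_W}^2 \to 0$ by the third condition in \eqref{Eq:GammaTNorms}. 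The first two conditions of \eqref{Eq:GammaTNorms} ensure $\|h\|_{H^{d+1+\kappa}}, \|h\|_{H^{d/2+1+\kappa}} = o(1)$ and $\|\nabla h\|_2 = o(\eps_T)$, so the shifted set $\Dcal_T - h$ still has posterior mass tending to one (the discrepancy being absorbed into slightly enlarged constants in $\Dcal_T$). What remains is to show that the Paley--Wiener exponential factor is asymptotically neutralised along the posterior, i.e.~that the integral on the right is asymptotic to $\int_{\Dcal_T} e^{\ell_T(B)} d\Pi(B)$.

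This last step is the main technical obstacle. Although $\langle h,\cdot\rangle_{\H^*}$ is centred Gaussian under $\Pi$ with prior variance $\|h\|_\H^2\to 0$, the likelihood reweighting entailed by conditioning on $X^T$ does not transfer this control automatically; propagating it requires a separate argument, e.g.~via Gaussian concentration (Borell--TIS) combined with an evidence lower bound on the posterior normalising constant in the spirit of \cite{giordano2022nonparametric}, yielding $\langle h,B\rangle_{\H^*} = o_{P_0}(1)$ uniformly on $\Dcal_T$. The strengthened assumption $\|\gamma_T\|_{\H_W} = o(1/(\sqrt T \eps_T^2))$ beyond what makes $\|h\|_\H \to 0$ is precisely the safety margin required to execute this argument. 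Once this is in place, Theorem~\ref{thm:BvM} applies, with the $H^1$-approximation hypothesis \eqref{Eq:GammaTApprox} specialising \eqref{Eq:gamma_T_conditions} to the conjugate exponent $q=2$.
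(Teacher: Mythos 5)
Your localisation step matches the paper's: posterior contraction from \cite{giordano2022nonparametric} for $\|\nabla B-\nabla B_0\|_2$, the rescaling-induced bound on $\|B\|_{H^{d+1+\kappa}}$, Sobolev interpolation for the intermediate-norm condition, and Poincar\'e plus \eqref{Eq:remainder_text} for the remainder, with $\xi_T=\sqrt{T}\eps_T^2$. That part is fine.

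The gap is in the change of measure, and you have in fact flagged it yourself: after Cameron--Martin you are left with the cross term $\tfrac{u}{\sqrt T}\langle \gamma_T,B'\rangle_{\H}$ in the exponent, and you state that controlling it under the posterior ``requires a separate argument'' via Borell--TIS plus an evidence lower bound, without carrying that argument out. The paper closes this by a cleaner device that your sketch misses: the constraint $|\langle B,\gamma_T\rangle_{\H_W}|\le M\|\gamma_T\|_{\H_W}$ is built into the localising set $\Dcal_T$ from the outset (a third defining condition, alongside the two you use). Its prior complement has mass at most $e^{-M^2T\eps_T^2}$ by the elementary Gaussian tail bound applied to the one-dimensional variable $\langle B,\gamma_T\rangle_{\H}\sim N(0,T\eps_T^2\|\gamma_T\|_{\H_W}^2)$, so the standard evidence-lower-bound mechanism (the analogue of Theorem 8.20 of \cite{ghosal2017}) gives $\Pi(\Dcal_T|X^T)\to^{P_0}1$ for this refined set as well. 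Once the constraint sits inside the integration domain, the exponent is bounded \emph{deterministically and uniformly} over $\Dcal_{T,u}$ by
\begin{equation*}
M|u|\sqrt{T}\eps_T^2\|\gamma_T\|_{\H_W}+\tfrac{3}{2}u^2\eps_T^2\|\gamma_T\|_{\H_W}^2=o(1),
\end{equation*}
using $\|\cdot\|_{\H}^2=T\eps_T^2\|\cdot\|_{\H_W}^2$ and $\|\gamma_T\|_{\H_W}=o(1/(\sqrt{T}\eps_T^2))$ --- this is exactly where the ``safety margin'' you mention is spent, but as a uniform bound over the set rather than as a concentration statement propagated through the posterior. Without this (or a fully executed substitute), the ratio does not reduce to $e^{o(1)}\Pi(\Dcal_{T,u}|X^T)/\Pi(\Dcal_T|X^T)$ and the verification of \eqref{Eq:change_of_measure} is incomplete. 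A secondary point: the uniform control must hold over the shifted set $\Dcal_{T,u}$, not over $\Dcal_T$; the translation costs an extra $u^2\eps_T^2\|\gamma_T\|_{\H_W}^2$ term, which is harmless but needs to be accounted for, and the third constraint must also be checked to survive the shift (it does, since $\|\gamma_T\|_{\H_W}=o(1)$).
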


Theorem \ref{Theo:GaussBvM} shows that suitably rescaled Gaussian priors satisfy a semiparametric BvM under the mild regularity Condition \ref{Condition:BaseGP}. The requirements on the ground truth $B_0$ parallel those in Theorem 2.1 in \cite{giordano2022nonparametric}, which establishes posterior contraction rates that are used in the proof of Theorem \ref{Theo:GaussBvM} to construct the localising sets $\Dcal_T$ from \eqref{Eq:D_T_conditions}. These conditions entail that $B_0$ be (at least) $(s+1)$-Sobolev-regular, and that it be well-approximated by elements of the RKHS $\H_W$. For Gaussian priors modelling Sobolev smooth functions, the approximating sequence $B_{0,T}$ can be readily constructed; cf.~Examples \ref{Ex:Matern} and \ref{Ex:WavSeries} below. The smoothness condition $s>d$, similar to the one imposed in the contraction rate analysis of \cite{giordano2022nonparametric}, is slightly stronger than the typical assumptions for BvM results in simpler nonparametric statistical models; it is possibly an artefact of the proof arising from the need to control the underlying nonlinearity.

%

In Theorem \ref{Theo:GaussBvM}, we restrict to functionals $\Psi$ with remainders satisfying \eqref{Eq:remainder_text} for concreteness, since the remainder condition can then be more easily verified via Remark \ref{Rem:remainder}. As shown by Examples \ref{Ex:linear_B} - \ref{Ex:power_mu}, this already allows to cover several interesting nonlinear instances. The additional norm bounds in \eqref{Eq:GammaTNorms} for the approximate LAN representors $\gamma_T$ compared to \eqref{Eq:gamma_T_conditions} are used in the verification of the asymptotic invariance property \eqref{Eq:change_of_measure} for Gaussian priors. For priors modelling Sobolev regular functions, conditions \eqref{Eq:GammaTNorms} and \eqref{Eq:GammaTApprox} can typically be verified via standard elliptic regularity estimates and approximation theory for any $\psi\in \dot{H}^t(\T^d)$, $t>(d/2-1)_+$, and thus impose no additional restrictions.

In the following examples, to enforce the identifiability condition $\int_{\T^d} B(x) dx = 0$, we set the coefficient of the constant function ($e_0\equiv 1$ for the Fourier basis and $\Phi_{-10}\equiv 1$ for wavelets) equal to zero. For more general Gaussian processes, one can directly recenter priors draws $B \mapsto B - \int_{\T^d} B(x) dx$.

\begin{example}[Periodic Matérn process]\label{Ex:Matern}
For $s>3d/2$, consider a base Gaussian prior $\Pi_W$ given by the law of a periodic Matérn process of regularity $s+1-d/2$ (cf.~Example 11.8 in \cite{ghosal2017}), namely the centred Gaussian process $W=\{W(x),\ x\in\T^d\}$ with covariance kernel
\begin{equation}\label{Eq:Kper}
    K_{\textnormal{per}}(x,y) := (2\pi)^d \sum_{k\in\Z^d\backslash\{0\}} \frac{1}{(1+4\pi^2|k|^2)^{(s+1)/2}}e_k(x)\overline{e_k(y)},
    \qquad x,y\in\T^d,
\end{equation}
where $\{e_k(x) = e^{2\pi k.x}: k\in\Z^d\}$ is the Fourier basis of $L^2(\T^d)$, see Section A.1.1 of \cite{giordano2022nonparametric} for further details. By the Fourier series characterization of periodic Sobolev spaces, it has RKHS $\H_W = \dot{H}^{s+1}(\T^d)$ with RKHS norm equivalence $\|\cdot\|_{\H_W}\simeq \|\cdot\|_{H^{s+1}}$. 

The periodic Mat\'ern process satisfies Condition \ref{Condition:BaseGP}, being supported in $\dot{C}^{s+1-d/2-\eta}(\T^d)$ for any $\eta>0$, which is a separable linear subspace of $\dot{H}^{d+1+\kappa}(\T^d)\cap \dot{C}^{(d/2+\kappa)\vee 2}(\T^d)$ for sufficiently small $\kappa>0$ since $s>3d/2$ (see Section A.1.1 of \cite{giordano2022nonparametric} for details). Given $B_0\in \dot{H}^{s+1}(\T^d) = \H$, we may apply Theorem \ref{Theo:GaussBvM} with the trivial choice $B_{0,T}\equiv B_0$.

Let $\Psi:\dot{C}^2(\T^d) \to \R$ be a functional with expansion \eqref{Eq:psi_exp}, representor $\psi \in \dot{H}^t(\T^d)$ for some $t>(d/2-1)_+$ and remainder satisfying \eqref{Eq:remainder_text}. Since $B_0\in \dot{H}^{s+1}(\T^d)$ and $s>3d/2$, we also have $\mu_0\propto e^{2B_0}\in H^{s+1}(\T^d)\subset C^{d+1+\kappa}(\T^d)$, the last inclusion holding by the Sobolev embedding. Hence by Lemma \ref{lem:PDE} in the Supplemt, there exists a unique element $A^{-1}_{\mu_0}\psi \in  \dot H^{(d/2+1+\kappa)\vee 2}(\T^d)$ such that $A_{\mu_0}A^{-1}_{\mu_0}\psi = \psi$ almost everywhere, and $
\|A^{-1}_{\mu_0}\psi\|_{H^{(d/2+1+\kappa)\vee 2}}\lesssim  \|\psi\|_{H^{(d/2-1+\kappa)_+}}$ for any $\kappa>0$ small enough. Define its truncated Fourier series
$$
    \gamma_T:=\sum_{k\in\Z^d\backslash\{0\} : |k|_\infty\le K}
    \langle A^{-1}_{\mu_0}\psi,e_k\rangle_2 e_k,
$$
for $K = K_T \simeq T^{1/(2s+d)}$. Using the Fourier characterization of Sobolev spaces, it holds that $\gamma_T\in \dot{H}^{r}(T^d)$ for all $r\ge 0$ with the estimate
\begin{equation}\label{Eq:Fourier_norm}
\|\gamma_T\|_{H^{r}} \lesssim K^{\max\{ r-(d/2+1+\kappa)\vee 2,0\} } \|A_{\mu_0}^{-1}\psi\|_{H^{(d/2+1+\kappa)\vee 2}} \lesssim
    \begin{cases}
        T^{\frac{(r-d/2-1-\kappa)_+}{2s+d}}, & \text{if } d\geq 2,\\
        T^{\frac{(r-2)_+}{2s+1}}, & \text{if } d=1,
    \end{cases}
\end{equation}
using which one can readily verify \eqref{Eq:GammaTNorms}. Similarly, \eqref{Eq:GammaTApprox} holds since $\| A_{\mu_0}^{-1}\psi - \gamma_T\|_{H^1} \lesssim K^{-(d/2+\kappa)\vee 1} $ $\|A_{\mu_0}^{-1}\psi\|_{H^{(d/2+1+\kappa)\vee 1}} \lesssim T^{-\frac{(d/2+\kappa)\vee 1}{2s+d}}$. We may thus apply Theorem \ref{Theo:GaussBvM} to the periodic Matérn base prior \eqref{Eq:Kper} and any $B_0\in \dot H^{s+1}(\T^d)$, $s>3d/2$, for any functional $\Psi:\dot{C}^2(\T^d) \to \R$ admitting expansion \eqref{Eq:psi_exp} with representor $\psi \in \dot{H}^t(\T^d)$, $t>(d/2-1)_+$, and remainder satisfying \eqref{Eq:remainder_text}.
\end{example}

The above priors are equivalent to mean-zero Gaussian processes with covariance operator equal to an inverse power of the Laplacian \cite{PSvZ13,vWvZ16}, for which posterior inference
based on discrete data can be computed using finite element methods \cite{PPRS12}. Another approach is to truncate a Gaussian series expansion in a suitable basis. We illustrate this in the following example.

\begin{example}[Truncated wavelet series]\label{Ex:WavSeries}
Let $\{\Phi_{lr}, \ l\in\{-1,0\}\cup\N,\ r=0,\dots,(2^{ld}-1)\vee0 \}$ be an orthonormal tensor product wavelet basis of $L^2(\T^d)$, obtained from $S$-regular periodised Daubechies wavelets in $L^2(\T)$; see Section 4.3 in \cite{ginenickl2016} for details. Consider a base truncated Gaussian wavelet series prior
\begin{equation}
\label{Eq:WavPrior}
    \Pi_W := \Lcal(W),
    \quad W(x) := \sum_{l=0}^J\sum_r 2^{-l(s+1)}g_{lr}\Phi_{lr}(x)
    \quad g_{lr}\iid N(0,1),
    \quad x\in\T^d, 
\end{equation}
for some $s>d$ and $J=J_T\in\N$ such that $2^{J_T}\simeq T^{1/(2s+d)}$. As shown in Example 2.2 of \cite{giordano2022nonparametric}, $\Pi_W$ satisfies Condition \ref{Condition:BaseGP} with support equal to the wavelet approximation space $V_J:=\textnormal{span}(\Phi_{lr}, \ l\in\{0\}\cup\N,\ r=0,\dots,(2^{ld}-1)\vee0)$, and has RKHS $\H_W=V_J$ with $\|h\|_{\H_W} = \|h\|_{H^{s+1}}$. If $B_0\in\dot{C}^{s+1}(\T^d)$, 
then by standard wavelet approximation properties (e.g.~Section 4.3 in \cite{ginenickl2016}), the projection $B_{0,T}:=\sum_{l =0}^ J\sum_r\langle B_0,\Phi_{lr}\rangle_2\Phi_{lr}$ satisfies $\|B_{0,T}\|_{H^{s+1}}\le \|B_0\|_{H^{s+1}}<\infty$ and $\|B_0 - B_{0,T}\|_{C^1}\lesssim 2^{-Js} \simeq T^{-s/(2s+d)} =\eps_T$
as required for Theorem \ref{Theo:GaussBvM}. 

%

Let $\Psi:\dot{C}^2(\T^d) \to \R$ be a functional admitting  expansion \eqref{Eq:psi_exp} with representor $\psi \in \dot{H}^t(\T^d)$ for some $t>(d/2-1)_+$ and remainder satisfying \eqref{Eq:remainder_text}. Since $B_0\in \dot C^{s+1}(\T^d)$ with $s>d$, arguing as in Example \ref{Ex:Matern} implies that for sufficiently small $\kappa>0$, there exists a unique element $A^{-1}_{\mu_0}\psi \in  \dot H^{(d/2+1+\kappa)\vee 2}(\T^d)$ such that $
\|A^{-1}_{\mu_0}\psi\|_{H^{(d/2+1+\kappa)\vee 2}}\lesssim  \|\psi\|_{H^{(d/2-1+\kappa)_+}}$. Consider the truncated wavelet series
$$
    \gamma_T:=\sum_{l=0}^ J\sum_r
    \langle A^{-1}_{\mu_0}\psi,\Phi_{lr}\rangle_2\Phi_{lr},
$$
which satisfies $\gamma_T\in \dot H^{r}(T^d)$ for all $r\ge 0$ and the norm estimate \eqref{Eq:Fourier_norm}. As in Example \ref{Ex:Matern}, one can verify \eqref{Eq:GammaTNorms} and \eqref{Eq:GammaTApprox}. Theorem \ref{Theo:GaussBvM} therefore applies to the truncated Gaussian wavelet series prior \eqref{Eq:WavPrior} and any $B_0\in  \dot C^{s+1}(\T^d)$, $s>d$, for any functional $\Psi:\dot{C}^2(\T^d) \to \R$ with expansion \eqref{Eq:psi_exp}, representor $\psi \in \dot{H}^t(\T^d)$, $t>(d/2-1)_+$, and remainder satisfying \eqref{Eq:remainder_text}.
\end{example}

\subsection{Besov-Laplace priors}
\label{Sec:LaplPriors}

We next consider Besov-Laplace priors, constructed via random wavelet expansions with i.i.d.~random coefficients following the Laplace (or double exponential) distribution with density $\lambda(z) =e^{-|z|}/2$, $z\in\R$. Specifically, we employ rescaled and truncated priors obtained starting from a base probability measure $\Pi_W$ given by the law of the random function
\begin{equation}
\label{Eq:BaseLaplPrior}
    W(x) := \sum_{l=0}^J\sum_r2^{-l(s+1-d/2)}g_{lr}\Phi_{lr}(x),
    \qquad g_{lr}\iid \lambda,
    \qquad x\in\T^d,
\end{equation}
with $J=J_T\in\N$ such that $2^{J}\simeq T^{1/(2s+d)}$, and where $\{\Phi_{lr}, \ l\in\{-1,0\}\cup\N,\ r=0,\dots,(2^{ld}-1)\vee0 \}$ is an $S$-regular orthonormal tensor product wavelet basis of $L^2(\T^d)$ (with $S\in\N$ fixed but arbitrarily large), cf.~Example \ref{Ex:WavSeries}. This extends the construction of Gaussian wavelet series priors, and represents a specific instance of the more general class of $p$-exponential (or Besov) priors \cite{LSS09,agapiou2021rates,AgapiouSavva2024}, which prescribe random wavelet coefficients with tail behaviour between the Laplace (corresponding to $p=1$) and the Gaussian distribution ($p=2$). Besov-Laplace priors have recently enjoyed significant popularity within the inverse problems and imaging communities \cite{LSS09,DHS12,KLNS12,BH15,AW21,Magra2025} since they exhibit attractive sparsity-promoting and edge-preserving properties, while also maintaining a log-concave structure favourable to computation and theoretical analysis. Note that in \eqref{Eq:BaseLaplPrior}, the wavelet coefficient corresponding to the constant function $\Phi_{-10}\equiv 1$ has been set to zero to ensure the zero-integral identifiability condition.

In a similar spirit to the Gaussian priors studied in Section \ref{Sec:GP}, we construct rescaled truncated Besov-Laplace priors $\Pi=\Pi_T$, given by, for $W\sim\Pi_W$ as in \eqref{Eq:BaseLaplPrior},
\begin{equation}
\label{Eq:RescaledBesovPrior}
    B(x)=\frac{W(x)}{T^{d/(2s+d)}},
    \qquad x\in\T^d.
\end{equation}
In the present setting, these priors are known to achieve minimax-optimal contraction rates \cite{giordano2022nonparametric} and the following result shows that they also satisfy a semiparametric BvM.

\begin{theorem}\label{Theo:LaplBvM}
For $s>d+(d/2)\vee 2$, let $\Pi$ be the rescaled Besov-Laplace prior from \eqref{Eq:RescaledBesovPrior}, and suppose $B_0\in \dot{H}^{s+1}(\T^d)$. Let $\Psi:\dot{C}^2(\T^d) \to \R$ be a functional admitting expansion \eqref{Eq:psi_exp} with representor $\psi \in \dot C^{t}(\T^d)$ for some $t>(d/2-1)_+$ and remainder satisfying \eqref{Eq:remainder_text}. Then the semiparametric Bernstein--von Mises holds for $\Psi$.
\end{theorem}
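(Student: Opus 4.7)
The plan is to apply the general semiparametric BvM theorem (Theorem \ref{thm:BvM}) with the conjugate exponents $(p,q)=(1,\infty)$, which is the natural choice given the $L^1$-type wavelet structure of the Besov--Laplace prior. Three things must be verified: construction of localising sets $\Dcal_T$ satisfying \eqref{Eq:D_T_conditions} and carrying full posterior mass; construction of an approximate least-favourable direction $\gamma_T$ meeting \eqref{Eq:gamma_T_conditions}; and the change-of-measure estimate \eqref{Eq:change_of_measure}.

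The localising sets can be built from the posterior contraction rate results for rescaled Besov--Laplace priors in the reversible diffusion model established in \cite{giordano2022nonparametric}, which furnish the rate $\eps_T = T^{-s/(2s+d)}$ in the contraction norm $\|\nabla(\cdot - B_0)\|_1$. The key structural feature is that prior draws are deterministically supported on the finite-dimensional wavelet space $V_J$ with $2^J \simeq T^{1/(2s+d)}$, so that the rescaling in \eqref{Eq:RescaledBesovPrior} enforces uniform $H^{d+1+\kappa}$-boundedness with high posterior probability; the regularity assumption $s > d + (d/2)\vee 2$ enters exactly to keep the expected squared norm finite. The intermediate bound $\|B-B_0\|_{H^{d/2+1+\kappa}} \le \zeta_T$ follows by interpolating between the $W^{1,1}$-contraction rate and uniform high-order Sobolev control, while Remark \ref{Rem:remainder} combined with a Gagliardo--Nirenberg estimate produces $\|B - B_0\|_2 \lesssim \nu_T = o(T^{-1/4})$. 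The remainder condition \eqref{Eq:remainder_text} then gives $|r(B,B_0)| = O(\nu_T^2)$, so $\xi_T := \sqrt{T} \nu_T^2 \to 0$.

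For the approximate representor, we define $\gamma_T$ as the wavelet projection of $A_{\mu_0}^{-1}\psi$ onto $V_J$. Since $B_0 \in \dot H^{s+1}$ with $s$ sufficiently large, Sobolev embedding gives $\mu_0 \in C^{d+1+\kappa}$, and Lemma \ref{lem:PDE} applied to $\psi \in \dot C^t$ with $t > (d/2-1)_+$ yields $A_{\mu_0}^{-1}\psi \in C^{t+2}$. Standard Hölder-type wavelet coefficient bounds $|\langle A_{\mu_0}^{-1}\psi, \Phi_{lr}\rangle_2| \lesssim 2^{-l(t+2+d/2)}$ then give the uniform Sobolev bound in \eqref{Eq:gamma_T_conditions}, as well as the $W^{1,\infty}$-approximation estimate $\|A_{\mu_0}^{-1}\psi - \gamma_T\|_{W^{1,\infty}} \lesssim 2^{-J(t+1)} = o(1/(\sqrt{T}\eps_T))$, valid precisely when $t > d/2 - 1$.

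The main obstacle, and where the Laplace argument departs most sharply from its Gaussian counterpart, is the change-of-measure condition \eqref{Eq:change_of_measure}. Since both the prior support $V_J$ and the perturbation $\gamma_T$ lie in the same finite-dimensional wavelet space, the translated law $\Pi(\cdot - u\gamma_T/\sqrt{T})$ is absolutely continuous with respect to $\Pi$, and the Radon--Nikodym derivative factorises explicitly over the product Laplace density. A coefficient-wise application of the reverse triangle inequality $\bigl||a|-|b|\bigr| \le |a-b|$ yields the uniform control
\begin{equation*}
    \sup_{B \in V_J} \left|\log \frac{d\Pi(\cdot + u\gamma_T/\sqrt{T})}{d\Pi}(B)\right| \,\le\, \frac{|u|\, T^{d/(2s+d)}}{2\sqrt{T}}\, \|\gamma_T\|_{B^{s+1}_{11}}.
\end{equation*}
Summing the Hölder wavelet estimates gives $\|\gamma_T\|_{B^{s+1}_{11}} \lesssim \sum_{l=0}^J 2^{l(s-t-1)} \lesssim T^{(s-t-1)_+/(2s+d)}$, and the total exponent $d/(2s+d) - 1/2 + (s-t-1)_+/(2s+d)$ in the display is negative precisely when $t > (d/2-1)_+$, exactly the hypothesis of the theorem. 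Combined with the negligible transfer of posterior mass between $\Dcal_T$ and $\Dcal_T \pm u\gamma_T/\sqrt{T}$---controlled by the uniform $W^{1,\infty}$-smoothness of $\gamma_T$ relative to $\eps_T$---this yields \eqref{Eq:change_of_measure} and completes the proof.
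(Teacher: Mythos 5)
Your proposal is correct and follows essentially the same route as the paper's proof: choosing $(p,q)=(1,\infty)$, building $\Dcal_T$ from the contraction and Sobolev-norm results of \cite{giordano2022nonparametric} plus interpolation on the finite-dimensional space $V_J$, taking $\gamma_T$ to be the wavelet projection of $A_{\mu_0}^{-1}\psi$ onto $V_J$, and controlling the shift via the explicit Laplace Radon--Nikodym derivative and the reverse triangle inequality on $\|\cdot\|_{B^{s+1}_{11}}$. The only differences are cosmetic (you track the full H\"older regularity $C^{t+2}$ of $A_{\mu_0}^{-1}\psi$ and sup-norm wavelet decay, whereas the paper caps the regularity at $(d/2+1+\kappa)\vee 2$ and works in the $B^{\cdot}_{\infty 1}$ scale, which sidesteps needing $\mu_0\in C^{t+1}$ for large $t$), and both yield the same negativity-of-exponent condition $t>(d/2-1)_+$.
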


The conditions on $\Pi$ and $B_0$ in Theorem \ref{Theo:LaplBvM} match those in Theorem 2.4 of \cite{giordano2022nonparametric}, which establishes posterior contraction for these priors.
In Theorem \ref{Theo:LaplBvM}, we again restrict for concreteness to functionals $\Psi$ with remainder satisfying \eqref{Eq:remainder_text}. The (slightly stronger) H\"older-smoothness condition imposed on $\psi$, as opposed to the Sobolev requirements in Theorems \ref{thm:BvM} and \ref{Theo:GaussBvM}, is due to the necessity of approximating the LAN representor $A_{\mu_0}^{-1}\psi$ in the stronger $\|\cdot\|_{W^{1,\infty}}$-norm (cf.~the second display in  \eqref{Eq:gamma_T_conditions}), which we approached by invoking H\"older-type regularity estimates for elliptic PDEs and wavelet approximation properties in sup-norm. We note that, however, the full smoothness range $t>(d/2-1)_+$ is allowed in Theorem \ref{Theo:LaplBvM}.

%
%
%
%
%

\section{Numerical illustrations}
\label{Sec:Numerics}

In this section, we illustrate our theoretical findings in finite sample sizes via numerical simulation studies. We first consider Gaussian priors, which were shown to be conjugate in \cite{giordano2022nonparametric}, with explicit formulae for the posterior mean and covariance, cf.~\eqref{Eq:Conjugate} below. We then move to non-conjugate Besov-Laplace priors which require Markov chain Monte Carlo (MCMC) methods to approximately sample from the associated posterior distributions.

For three different periodic `ground truths' $B$ defined on the bi-dimensional torus $\T^2 =(0,1]^2$, cf.~eq.~\eqref{Eq:Truths} and Figure \ref{Fig:Truths} (top row), we simulate the continuous diffusion trajectories $X^T$ for increasing $T$, and compute (suitable approximations of) the posterior distributions of $B|X^T$ based on Gaussian and Besov-Laplace priors. We then select three nonlinear real-valued functionals $\Psi$ and obtain the posteriors for $\Psi(B)$, reporting coverage scores, lengths of the $95\%$-credible intervals and the estimation errors of the posterior means. We run each simulation 250 times, recording average values and standard deviations when relevant. All experiments were carried out on a MacBook Pro with M1 processor and 8GB RAM. The MATLAB code to reproduce these simulations is available at: 
\url{https://github.com/MattGiord/Rev-Diff}.

\paragraph{Data generation.} We take the three true potential energy fields on $\T^2$ to be:
\begin{equation}
\label{Eq:Truths}
\begin{split}
    B^{(1)}(x,y) &=e^{-(7.5x-5)^2-(7.5y-5)^2}
    +e^{-(7.5x-2.5)^2-(7.5y-2.5)^2};\\
    B^{(2)}(x,y)&=2+e^{-(7.5x-5)^2-(7.5y-5)^2}
    -e^{-(7.5x-2.5)^2-(7.5y-2.5)^2};\\
    B^{(3)}(x,y)&=e^{-(7.5x-5.5)^2-(7.5y-5.5)^2}
    +0.75e^{-(5x-1.25)^2-(7.5y-5.5)^2}\\
    &\quad+1.25e^{-(7.5x-5.5)^2-(5y-1.25)^2}
    +e^{-(7.5x-2)^2-(7.5y-2)^2}.
\end{split}
\end{equation}
For each given $B$, we simulate the continuous trajectory $(X_t:t\ge0)$ via the Euler-Maruyama scheme
$$
    x_{r+1} = x_r+ \nabla B(x_r)\delta_t + \sqrt{\delta_t}W_r,
    \qquad W_r\iid N(0,I_2), 
    \qquad r\ge0.
$$
Across all the experiments, we set the time stepsize to $\delta_t = 10^{-4}$, resulting in realistic approximations of the continuous diffusion paths, cf.~Figure \ref{Fig:RevDiff} (right). The (uninfluential) initial condition was fixed to  $x_0=(1,1)$. We repeat the Euler-Maruyama scheme for $5\times 10^5$ and $10^6$ times, yielding time horizons $T=50$ and $T=100$, respectively. 

\paragraph{Functionals.} We illustrate the empirical semiparametric inference performance over a range of nonlinear functionals covered by our theoretical results, specifically $\Psi_1(B) = \int B(x)^2 dx$ (Example \ref{Ex:square}), $\Psi_2(B) = \int B(x)^4 dx$ (Example \ref{Ex:power} with $q=4$) and $\Psi_3(B) = \int \mu_B(x) \log \mu_B(x)dx$ (Example \ref{Ex:entropy}).

\begin{figure}[t]
\centering
\includegraphics[width=4.5cm]{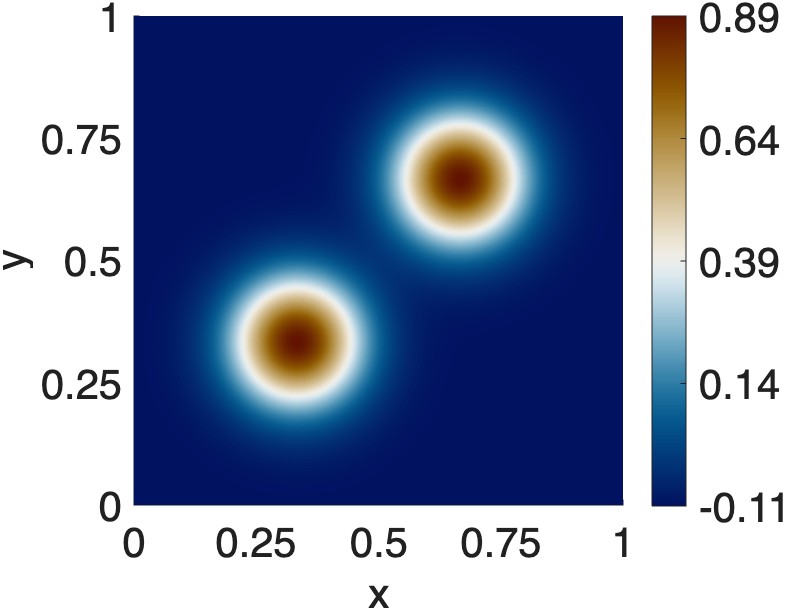}
\includegraphics[width=4.5cm]{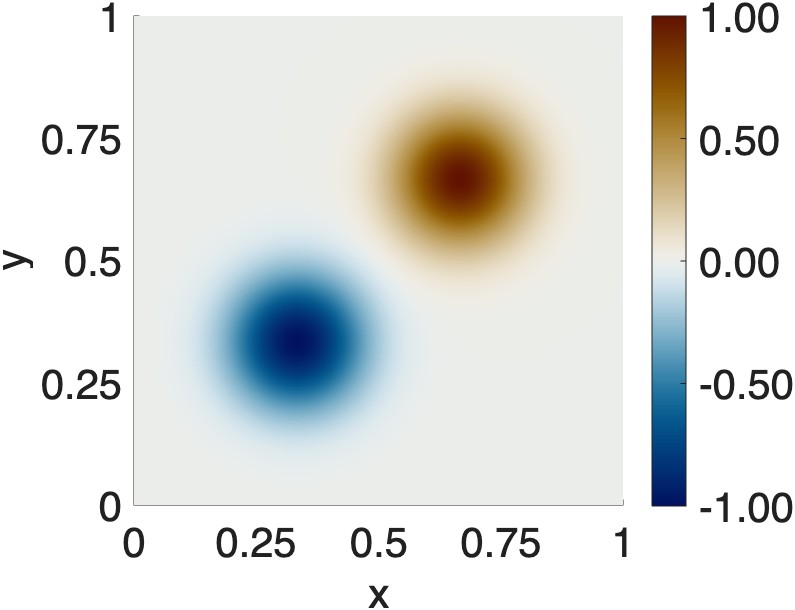}
\includegraphics[width=4.5cm]{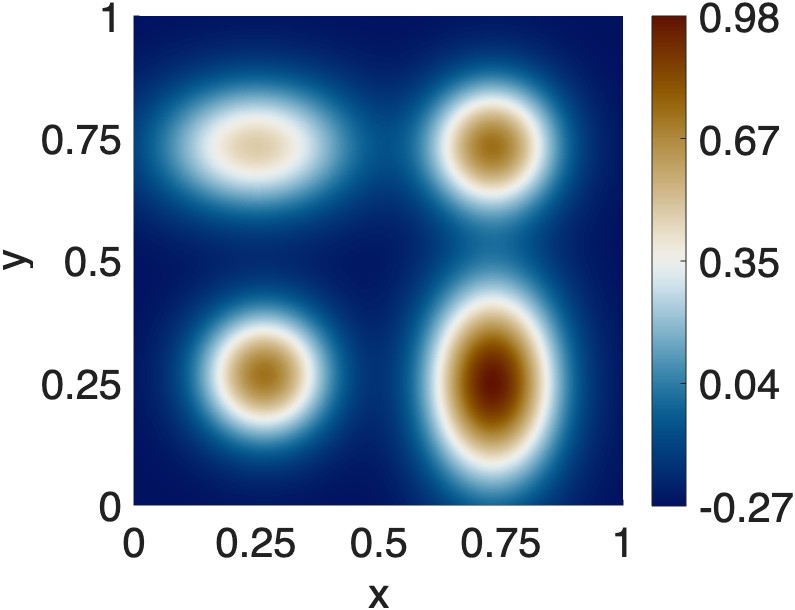}
\includegraphics[width=4.5cm]{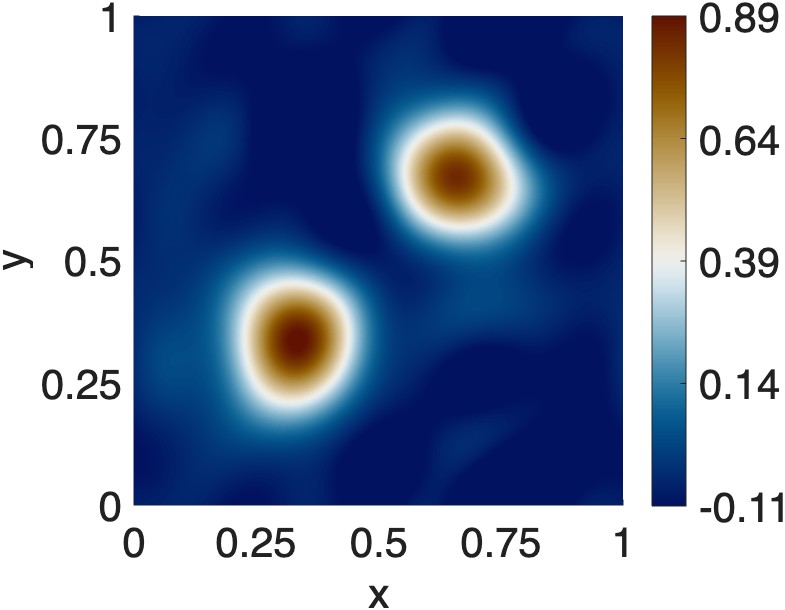}
\includegraphics[width=4.5cm]{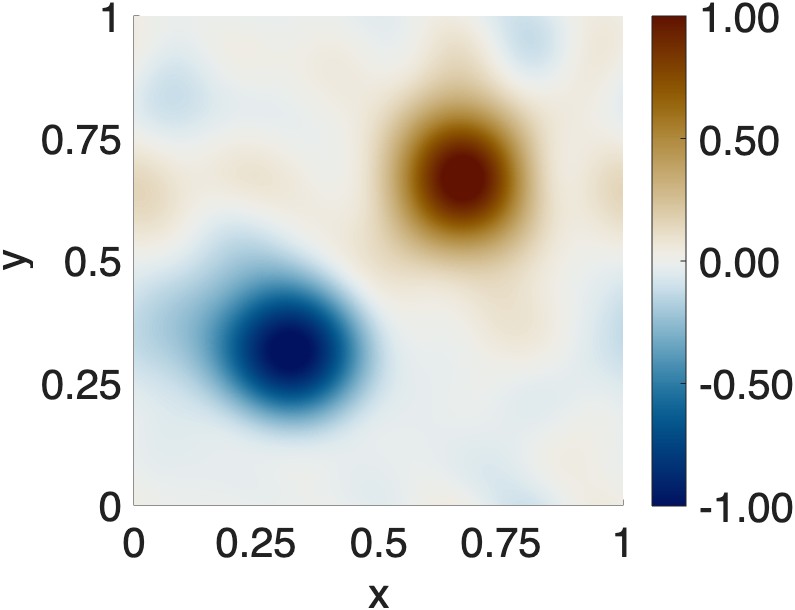}
\includegraphics[width=4.5cm]{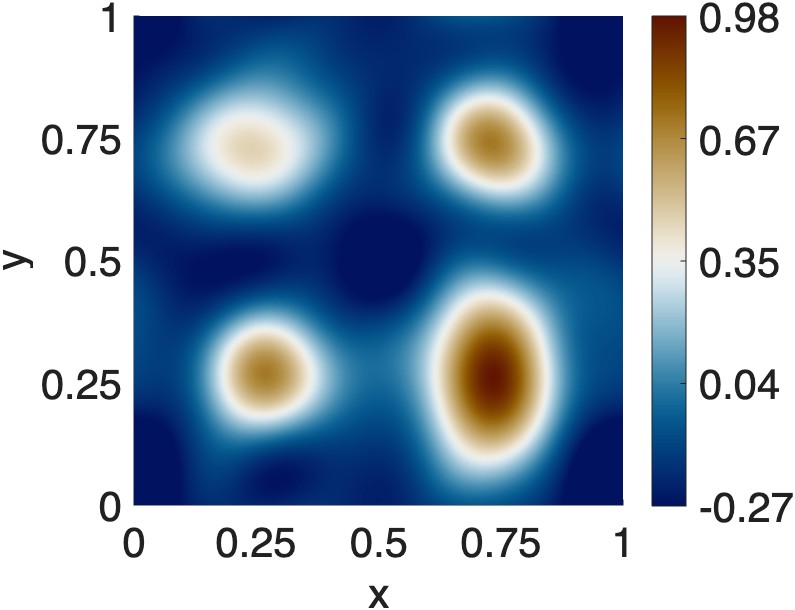}
\includegraphics[width=4.5cm]{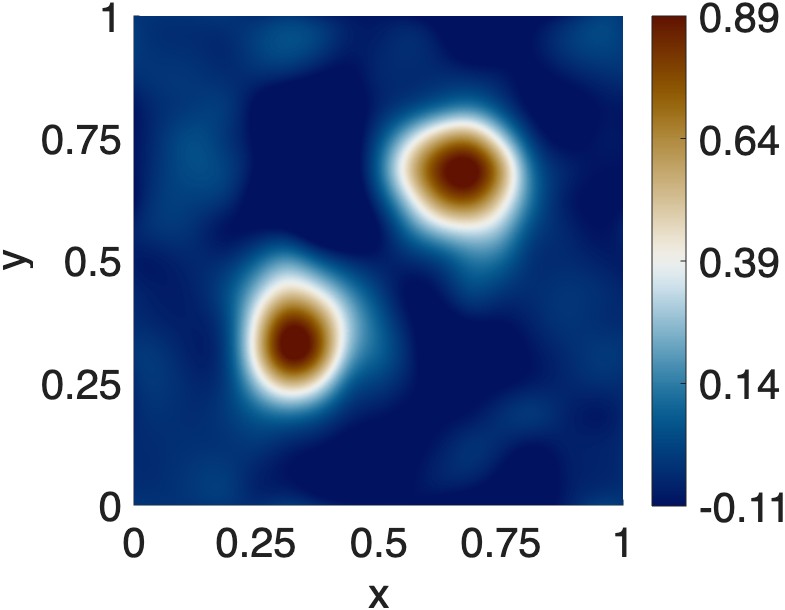}
\includegraphics[width=4.5cm]{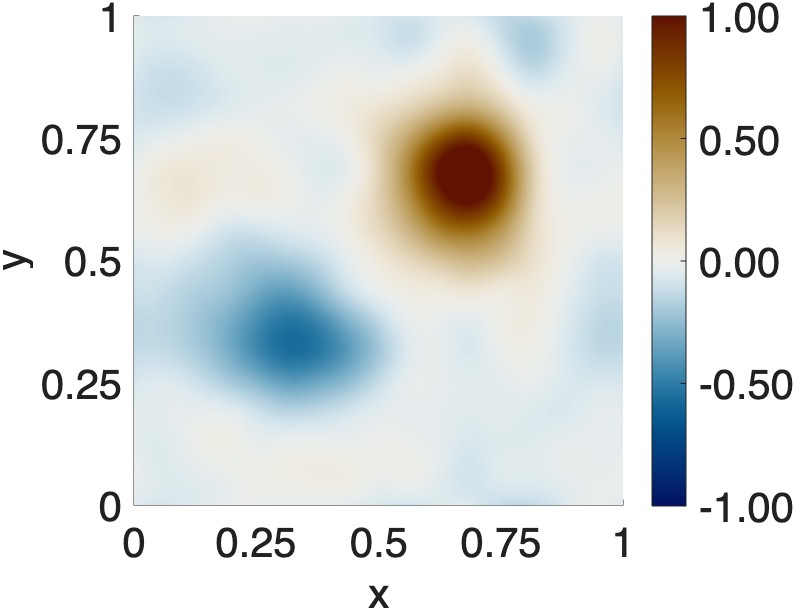}
\includegraphics[width=4.5cm]{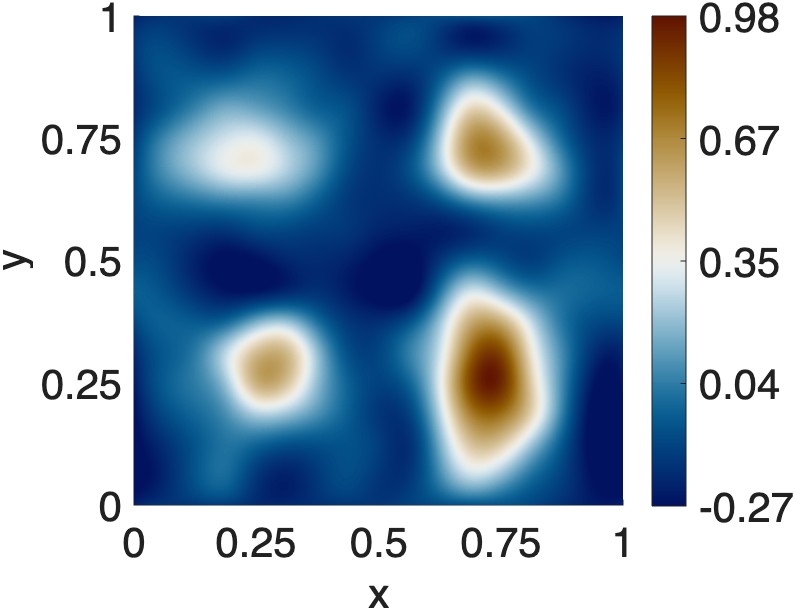}
\caption{Top row: the three potential energy fields $B^{(i)}$, $i=1,2,3$, from \eqref{Eq:Truths}. Central row: the corresponding posterior means $\bar B^{(i)}_{T}$, $i=1,2,3$, at time $T=100$, based on the Matérn process prior \eqref{Eq:ConcrGaussPrior}. The relative $L^2$-estimation errors $\|B^{(i)} - \bar B^{(i)}_T\|_2/\|B^{(i)}\|_2$ are equal to 0.21, 0.023, 0.12, respectively. Bottom row: posterior means based on the Besov-Laplace prior. Relative $L^2$-estimation errors: 0.22, 0.028, 0.17, respectively}.
\label{Fig:Truths}
\end{figure}

%
%
%

\subsection{Experiments with Gaussian priors}
\label{Sec:GaussNumerics}

\paragraph{Prior specification and conjugate formulae.} We employ the periodic Matérn process prior from Example \ref{Ex:Matern}, modelling
\begin{equation}
\label{Eq:ConcrGaussPrior}
    B(x)= \sum_{k\in\Z^2\backslash\{0\}: |k|_\infty\le K}
    v_kg_k e_k(x),
    \qquad g_k\iid N(0,1),
    \qquad x\in\T^2,
\end{equation}
where $v_k = (1+|k|^2)^{-(s+1)/2}T^{-1/(2s+2)}$ with $s=3=3d/2$, $\{e_k: k\in\Z^2\}$ is the bi-dimensional Fourier basis, and $K\in\N$ is a sufficiently high truncation level to ensure that all discretisation errors are negligible relative to the statistical ones. The factor $T^{-1/(2s+2)}$ corresponds to the rescaling from \eqref{Eq:RescaledGP}, required in our theoretical analysis. In the present experimental set-up, this induces only a slight variance adjustment, even at the larger time horizon $T=100$, where $T^{-1/(2s+2)} = 0.56$. In fact, throughout all simulations, we found the rescaling to have little impact on empirical performance. It was nevertheless incorporated in the prior specification for the results presented below to faithfully illustrate the procedures considered in our theorems. See the related discussion in Section \ref{Sec:Discussion}.

Identifying any function $B=\sum_{k\in\Z^2\backslash\{0\}: |k|_\infty\le K}B_ke_k$ with its Fourier coefficient vector $B=(B_k)_k$, this corresponds to assigning the multivariate Gaussian prior with diagonal covariance matrix,
$$
    B\sim N(0,\Upsilon), \qquad \Upsilon = \text{diag}[(v_k^2)_k],
$$
whence the conjugate computation in Section 2.3.3 of \cite{giordano2022nonparametric} yields the Gaussian posterior distribution
\begin{equation}\label{Eq:Conjugate}
    B|X^T\sim N((\Sigma + \Upsilon^{-1})^{-1}H,(\Sigma + \Upsilon^{-1})^{-1}),
\end{equation}
where
$$
    \Sigma = \left[ \int_0^T \nabla e_k(X_t).\nabla e_{k'}(X_t)dt\right]_{k,k'},
    \qquad H = \left[\int_0^T \nabla e_k(X_t).dX_t \right]_k.
$$
For each simulated continuous trajectory, we numerically compute the above matrix $\Sigma$ and vector $H$ by approximating the integrals with Riemann sums, which we then use to evaluate the posterior mean and covariance matrix in \eqref{Eq:Conjugate}.

For nonlinear functionals $\Psi:\dot{C}^2(\T^2)\to\R$, the plug-in posterior distribution of $\Psi(B)|X^T$ is generally non-Gaussian and not available in closed form. To compute the corresponding posterior mean and credible intervals, we use Monte Carlo approximation, which is straightforward to implement by sampling $B_{(1)},\dots,B_{(M)}$ from the explicitly available posterior distribution \eqref{Eq:Conjugate} of $B|X^T$, and computing samples $\Psi(B_{(1)}),\dots,$ $\Psi(B_{(M)})$. For each Monte Carlo approximation, we used $M=1000$ samples.

\paragraph{Results.} The central row of Figure \ref{Fig:Truths} shows the obtained posterior means at time $T=100$ for the three considered true potentials $B^{(1)}$, $B^{(2)}$, $B^{(3)}$. The estimates display an excellent visual quality in the reconstruction, correctly recovering the geometric features of the ground truths.

For the three nonlinear functionals $\Psi_1$, $\Psi_2$, $\Psi_3$, Table \ref{Tab:Results} reports the coverage and average lengths of the $95\%$ credible intervals, at times $T=50,100$, each obtained through 250 repeated experiments. For each combination of ground truth $B$ and functional $\Psi$, the obtained coverages are higher at the larger time horizon; in particular, for $T=100$, they are very close to the nominal level $95\%$ predicted by Theorem \ref{Theo:GaussBvM}. The average lengths of the credible interval are also seen to decrease as the time horizon increases. Table \ref{Tab:Results} further reports the average estimation errors for the posterior means, with the associated standard deviation relative to the 250 experiments. As expected from our theoretical results, the estimation errors (and standard deviations) become smaller across the board as $T$ increases. Among the three functionals, $\Psi_1(B)=\int B^2dx$ exhibits the largest finite-sample deviations from nominal coverage, particularly at $T=50$. This behaviour is consistent across all three ground truths and appears to reflect a larger finite-sample bias. Nevertheless, the coverage rapidly improves as T increases and approaches the nominal level at $T=100$.

\begin{table}[t]
\caption{Coverage scores (and average lengths) of the $95\%$ credible intervals, and average estimation errors (with standard deviations) of the posterior mean for three nonlinear functionals $\Psi(B)$, obtained over 250 repeated experiments, under the Gaussian prior \eqref{Eq:ConcrGaussPrior}.}
\label{Tab:Results}
\centering
\setlength{\tabcolsep}{4pt}
\begin{tabular}{cc||cc||cc}
\hline
 &  & \multicolumn{2}{c||}{\textbf{Coverage (Length)}} 
 & \multicolumn{2}{c}{\textbf{Error (SD)}} \\
\textbf{Truth} & \(\boldsymbol{\Psi(B)}\) 
& \textbf{\(T=50\)} & \textbf{\(T=100\)}
& \textbf{\(T=50\)} & \textbf{\(T=100\)} \\
\hline
\hline
\(B^{(1)}\) & \(\Psi_1\) & 0.61 (0.033) & 0.91 (0.023) & 0.013 (0.008) & 0.006 (0.004) \\
        & \(\Psi_2\) & 0.88 (0.022) & 0.95 (0.014) & 0.005 (0.008) & 0.002 (0.002) \\
        & \(\Psi_3\) & 0.82 (0.024) & 0.96 (0.017) & 0.008 (0.006) & 0.004 (0.002) \\
\hline
\(B^{(2)}\) & \(\Psi_1\) & 0.72 (0.042) & 0.92 (0.025) & 0.016 (0.007) & 0.007 (0.006) \\
        & \(\Psi_2\) & 0.95 (0.038) & 0.96 (0.016) & 0.008 (0.007) & 0.005 (0.005) \\
        & \(\Psi_3\) & 0.86 (0.029) & 0.93 (0.016) & 0.008 (0.005) & 0.005 (0.004) \\
\hline
\(B^{(3)}\) & \(\Psi_1\) & 0.66 (0.029) & 0.91 (0.024) & 0.013 (0.009) & 0.005 (0.004) \\
        & \(\Psi_2\) & 0.84 (0.021) & 0.96 (0.025) & 0.009 (0.007) & 0.003 (0.003) \\
        & \(\Psi_3\) & 0.76 (0.021) & 0.97 (0.019) & 0.008 (0.006) & 0.003 (0.003) \\
\hline
\end{tabular}
\end{table}

\begin{figure}[t]
\centering
\includegraphics[width=4.5cm,height=3.5cm]{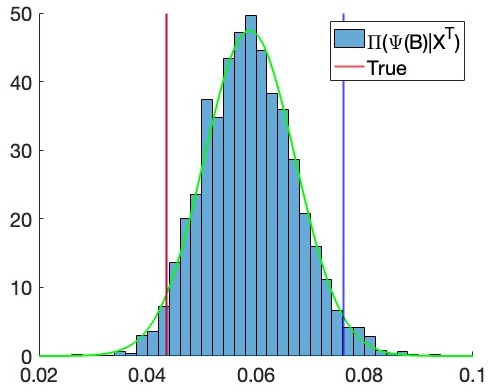}
\includegraphics[width=4.5cm,height=3.5cm]{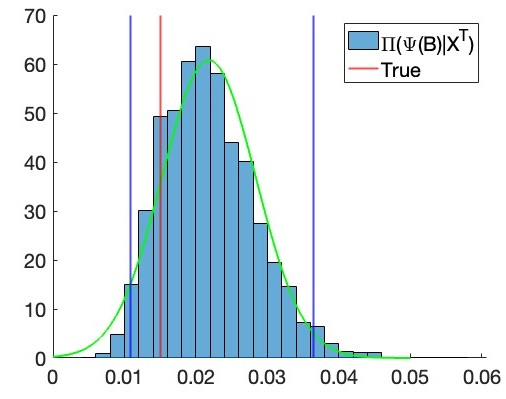}
\includegraphics[width=4.5cm,height=3.5cm]{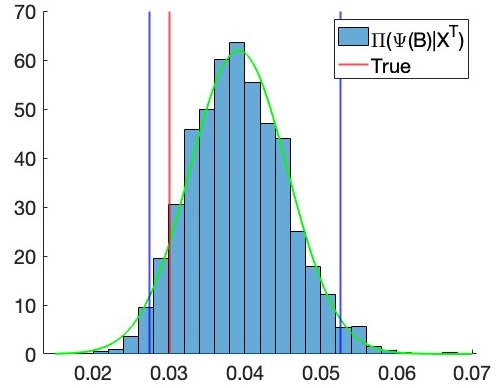}
\includegraphics[width=4.5cm,height=3.5cm]{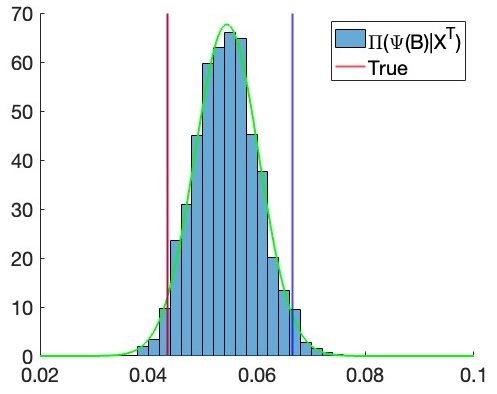}
\includegraphics[width=4.5cm,height=3.5cm]{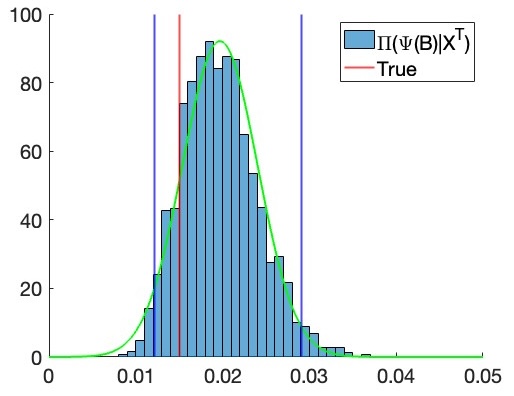}
\includegraphics[width=4.5cm,height=3.5cm]{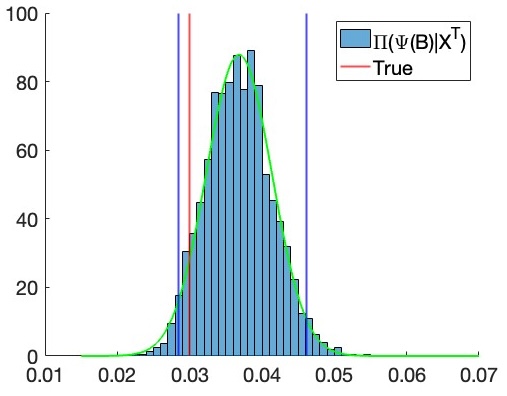}
\caption{Top row: plug-in posterior distributions of $\Psi_i(B)|X^T$, $i=1,2,3$, at $T=50$, based on the Gaussian prior \eqref{Eq:ConcrGaussPrior}. Bottom row: plug-in posteriors at $T=100$. The vertical red lines indicate the `ground truths' $\Psi_i(B^{(1)})$, for $B^{(1)}$ as in Figure \ref{Fig:Truths} (top-left). The vertical blue lines identify the $95\%$ credible intervals. The green line corresponds to a normal PDF centred at the posterior mean and with variance equal to the posterior variance.}
\label{Fig:PlugInPost}
\end{figure}

For the ground truth $B^{(1)}$, Figure \ref{Fig:PlugInPost} compares individual realisations of the plug-in posterior distributions of $\Psi_i(B)|X^T$, $i=1,2,3$, at times $T=50$ and $T=100$, obtained via Monte Carlo approximation. The plot displays a progressively more accurate Gaussian approximation as $T$ increases for the plug-in posterior of all three functionals, in line with the findings of Theorem \ref{Theo:GaussBvM}. For these specific realisations, the $95\%$ credible intervals (vertical blue lines) correctly cover the ground truths.

%
%
%

\subsection{Experiments with Besov-Laplace priors}
\label{Sec:LaplNumerics}

\paragraph{Prior specification.} We employ the rescaled and truncated Besov-Laplace prior from Section \ref{Sec:LaplPriors}, cf.~\eqref{Eq:RescaledBesovPrior}, with $s = 4 = d + 2$. By Bayes' formula, the posterior density for any function $B = \sum_{l\leq J,r} B_{lr}\Phi_{lr}$ is given by $d\Pi(B|X^T)\propto e^{-\Lambda_T(B)}$, where
$$
    \Lambda_T(B)
    =
    -\ell_T(B)
    +
    T^{\frac{d}{2s+d}}
    \sum_{l = 0}^J\sum_r
    2^{\,l(s+1+d/2-d)}
    |B_{lr}|,
$$
and the log-likelihood takes the form
\begin{equation}
\label{Eq:WavLogLik}
    \ell_T(B)
    =
    -\frac12
    \sum_{l,r}
    \sum_{l',r'}
    B_{lr}B_{l'r'}
    \int_0^T
    \nabla\Phi_{lr}(X_t).
    \nabla\Phi_{l'r'}(X_t)\,dt+
    \sum_{l,r}
    B_{lr}
    \int_0^T
    \nabla\Phi_{lr}(X_t).dX_t.   
\end{equation}
Note that the coefficients of the above quadratic form are analogous to the quantities $\Sigma$ and $H$ appearing in the Gaussian posterior computation from Section~\ref{Sec:GaussNumerics}, the only difference being that they are now evaluated relative to the wavelet basis $\{\Phi_{lr}\}$.

\paragraph{Posterior sampling with Besov-Laplace prior.} Besov-Laplace priors give rise to log-concave posterior distributions that are non-conjugate, unlike in the Gaussian case. Therefore, we resort to MCMC methods to draw
approximate posterior samples, from which posterior means and  credible intervals can be computed via their MCMC counterparts. Specifically, we employ the whitened preconditioned
Crank-Nicolson (wpCN) algorithm, which is a dimension-robust technique of Metropolis-Hastings type developed by \cite{chen2018robust} for priors that can be expressed as a transformation of a Gaussian white noise.

For the Besov-Laplace prior from Section \ref{Sec:LaplPriors}, we use that the random function $T^{-d/(2s+d)}W$ from \eqref{Eq:RescaledBesovPrior} is equal in distribution to
$$
    \Theta^{(T)}(\xi)
    :=
    \sum_{l=0}^J\sum_r
    \Theta^{(T)}_{lr}(g_{lr})\Phi_{lr},
$$
where
$$
    \xi(x)
    :=
    \sum_{l=0}^J\sum_r
    g_{lr}\Phi_{lr}(x),
    \qquad x\in\T^d,
    \qquad
    g_{lr}\iid N(0,1),
$$
is a Gaussian white noise process indexed by $\T^d$, and the `whitening transformation' $\Theta^{(T)}$ is defined coefficient-wise by
$$
    \Theta^{(T)}_{lr}(g_{lr})
    :=
    T^{-\frac{d}{2s+d}}
    2^{-l(s+1+d/2-d)}
    \operatorname{sgn}(g_{lr})
    \Bigl[
        -\log\!\bigl(2-2\Phi(|g_{lr}|)\bigr)
    \Bigr],
$$
with $\Phi$ the standard Gaussian cumulative distribution function. Starting from an initial white-noise sample $\xi_{(0)}$ and the corresponding transformed sample $B_{(0)}=\Theta^{(T)}(\xi_{(0)})$, the wpCN algorithm iterates the following steps:

\begin{itemize}

    \item Construct the whitened proposal $\xi^\ast=\sqrt{1-2b}\,\xi_{(m-1)}+\sqrt{2b}\,\zeta$, 
    where \(b\in(0,1/2)\) is a step-size and $\zeta$ is an independent Gaussian white noise.
    \item Set
    $$
        \xi_{(m)}
        =
        \begin{cases}
        \xi^\ast,
        &
        \text{with probability }
        \min\left\{
        1,
        e^{
            \ell_T\!\big(\Theta^{(T)}(\xi^\ast)\big)
            -
            \ell_T(B_{(m-1)})}
        \right\},
        \\[1ex]
        \xi_{(m-1)},
        &
        \text{otherwise}.
        \end{cases}
    $$

    \item Set $B_{(m)}=\Theta^{(T)}(\xi_{(m)})$.
\end{itemize}
The resulting Markov chain is reversible with respect to the posterior distribution \cite{chen2018robust}, and inherits the dimension-robust mixing properties characteristic of pCN-type algorithms \cite{cotter2013mcmc}. Posterior distributions and means for functionals $\Psi(B)$ are then computed from the retained MCMC samples after burn-in.

The evaluation of the acceptance probability in the second wpCN step requires computation of the proposal log-likelihood $\ell_T\!\big(\Theta^{(T)}(\xi^\ast)\big)$. Under the employed parametrisation, this operation is computationally efficient to perform, as it only involves vector-matrix multiplications. Periodic  wavelets are implemented via MATLAB Wavelet Toolbox using a regular grid of 2,048 nodes on $[0,1]^2$. Wavelet gradients are computed using finite differences, and the integrals $\int_0^T\nabla\Phi_{lr}(X_t).\nabla\Phi_{l'r'}(X_t)\,dt$ and $\int_0^T\nabla\Phi_{lr}(X_t).dX_t$ appearing in $\ell_T\!\big(\Theta^{(T)}(\xi^\ast)\big)$ are approximated once at the beginning of the procedure via numerical quadrature, similarly to the Gaussian case.

\paragraph{Results.}
The bottom row of Figure \ref{Fig:Truths} shows the obtained posterior means at time $T=100$ for the three considered potentials $B^{(1)}$, $B^{(2)}$, $B^{(3)}$. The estimates are visually close to the ones obtained under the Gaussian prior in Section \ref{Sec:GaussNumerics}. We note slightly larger estimation errors, which may be due to the additional numerical approximations required by the wpCN method, including the finite-difference evaluation of wavelet gradients.

For the three nonlinear functionals $\Psi_1,\Psi_2,\Psi_3$, Table~\ref{Tab:LaplResults} reports the coverage and average lengths of the $95\%$-credible intervals arising from the employed Besov-Laplace prior, obtained over 250 repeated experiments, together with the corresponding average estimation errors of the plug-in posterior means. The results are again qualitatively similar to those observed under the
Gaussian prior in Section~\ref{Sec:GaussNumerics}. Across all three ground truths, empirical coverages increase with the observation horizon and are close to the nominal $95\%$ level at $T=100$, in agreement with the Bernstein--von Mises phenomenon predicted by Theorem~\ref{Theo:LaplBvM}. Likewise, the average lengths of the credible intervals and the estimation errors decrease as $T$ increases. 

For the ground truth $B^{(1)}$, Figure~\ref{Fig:LaplPlugInPost}
compares individual realisations of the posterior of $\Psi_i(B)$, $i=1,2,3$, at times $T=50$ and $T=100$, obtained from the retained wpCN samples. While more noticeable deviations from Gaussianity remain at the smaller time horizon $T=50$, the marginal posteriors become progressively closer to Gaussian distributions as the amount of data increases. For these realisations, the associated credible intervals are also seen to correctly cover the true functional values.

\begin{table}[t]
\caption{Coverage scores (and average lengths) of the $95\%$ credible intervals, and average estimation errors (with standard deviations) of the posterior means for three nonlinear functionals $\Psi(B)$, obtained over 250 repeated experiments, under the Besov-Laplace prior.}
\label{Tab:LaplResults}
\centering
\setlength{\tabcolsep}{4pt}
\begin{tabular}{cc||cc||cc}
\hline
 &  & \multicolumn{2}{c||}{\textbf{Coverage (Length)}}
 & \multicolumn{2}{c}{\textbf{Error (SD)}} \\
\textbf{Truth} & \(\boldsymbol{\Psi(B)}\)
& \textbf{\(T=50\)} & \textbf{\(T=100\)}
& \textbf{\(T=50\)} & \textbf{\(T=100\)} \\
\hline
\hline
\(B^{(1)}\) & \(\Psi_1\) & 0.66 (0.036) & 0.90 (0.021) & 0.011 (0.007) & 0.008 (0.003) \\
        & \(\Psi_2\) & 0.89 (0.021) & 0.94 (0.017) & 0.004 (0.004) & 0.003 (0.004) \\
        & \(\Psi_3\) & 0.79 (0.021) & 0.95 (0.019) & 0.009 (0.008) & 0.006 (0.003) \\
\hline
\(B^{(2)}\) & \(\Psi_1\) & 0.73 (0.052) & 0.93 (0.028) & 0.018 (0.006) & 0.006 (0.004) \\
        & \(\Psi_2\) & 0.94 (0.037) & 0.97 (0.017) & 0.009 (0.008) & 0.006 (0.006) \\
        & \(\Psi_3\) & 0.85 (0.031) & 0.93 (0.017) & 0.007 (0.004) & 0.004 (0.003) \\
\hline
\(B^{(3)}\) & \(\Psi_1\) & 0.67 (0.032) & 0.92 (0.027) & 0.015 (0.011) & 0.005 (0.004) \\
        & \(\Psi_2\) & 0.83 (0.021) & 0.96 (0.024) & 0.008 (0.007) & 0.002 (0.004) \\
        & \(\Psi_3\) & 0.79 (0.023) & 0.96 (0.023) & 0.005 (0.003) & 0.002 (0.001) \\
\hline
\end{tabular}
\end{table}

\begin{figure}[t]
\centering
\includegraphics[width=4.5cm,height=3.5cm]{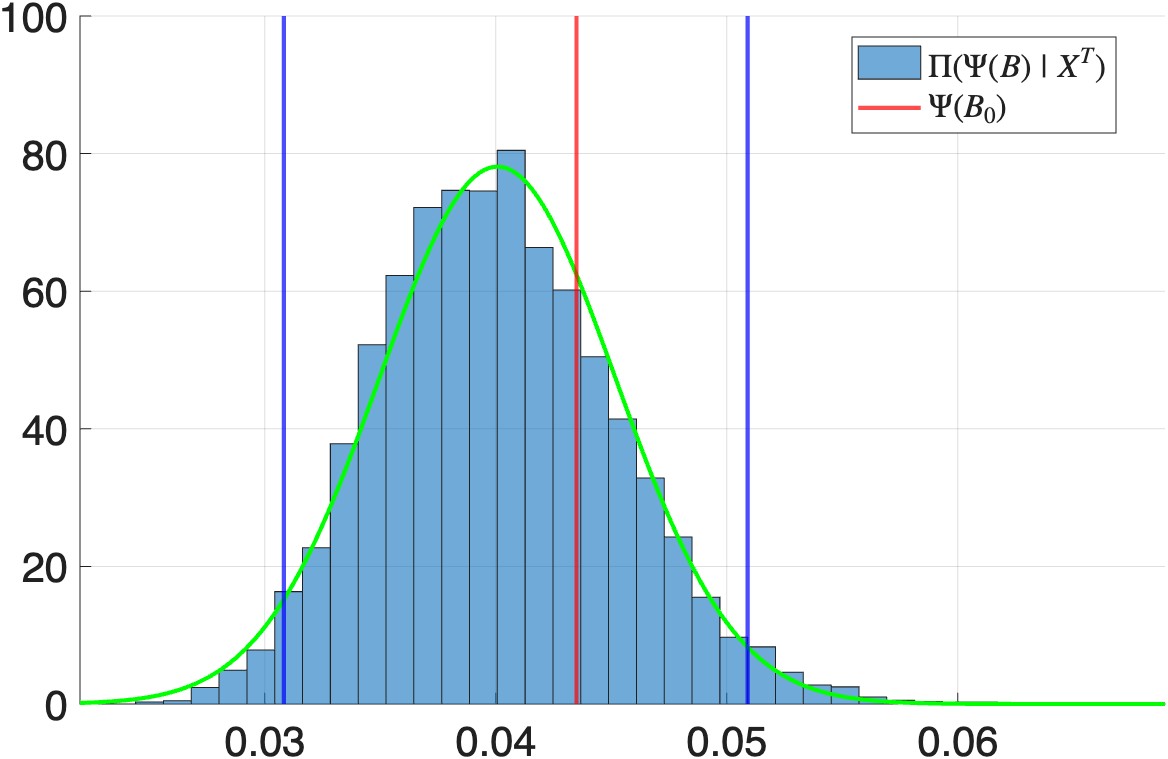}
\includegraphics[width=4.5cm,height=3.5cm]{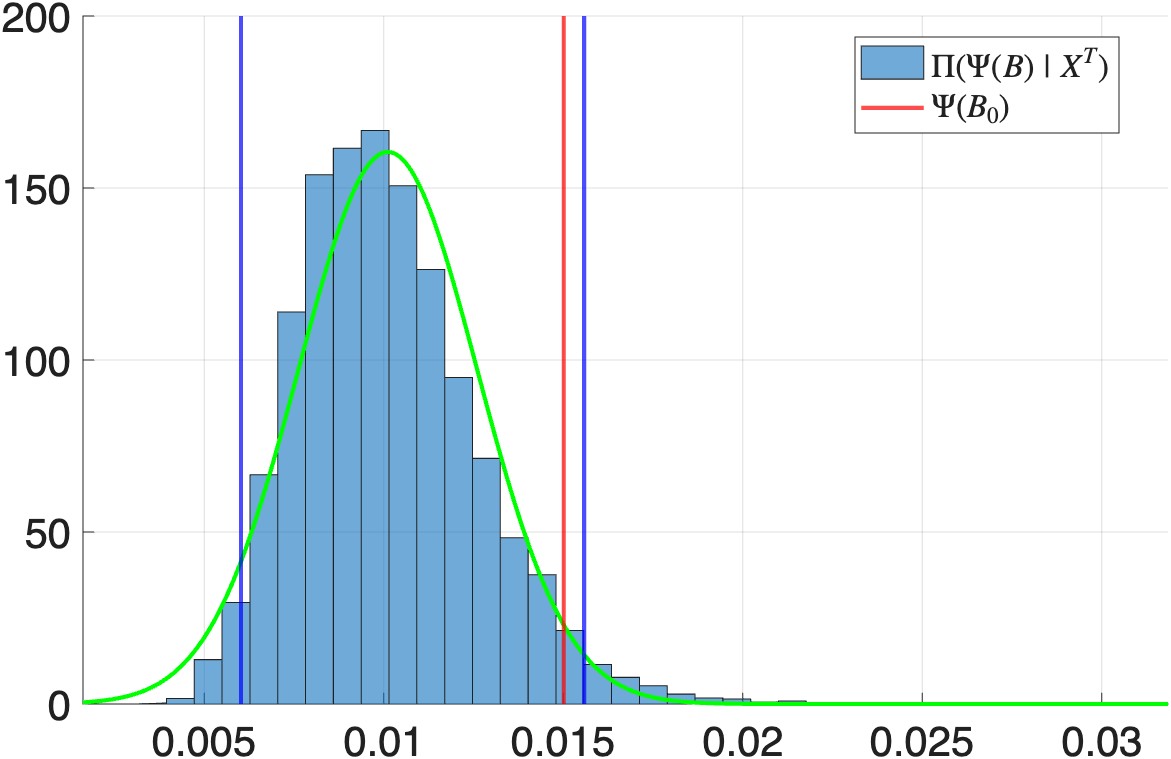}
\includegraphics[width=4.5cm,height=3.5cm]{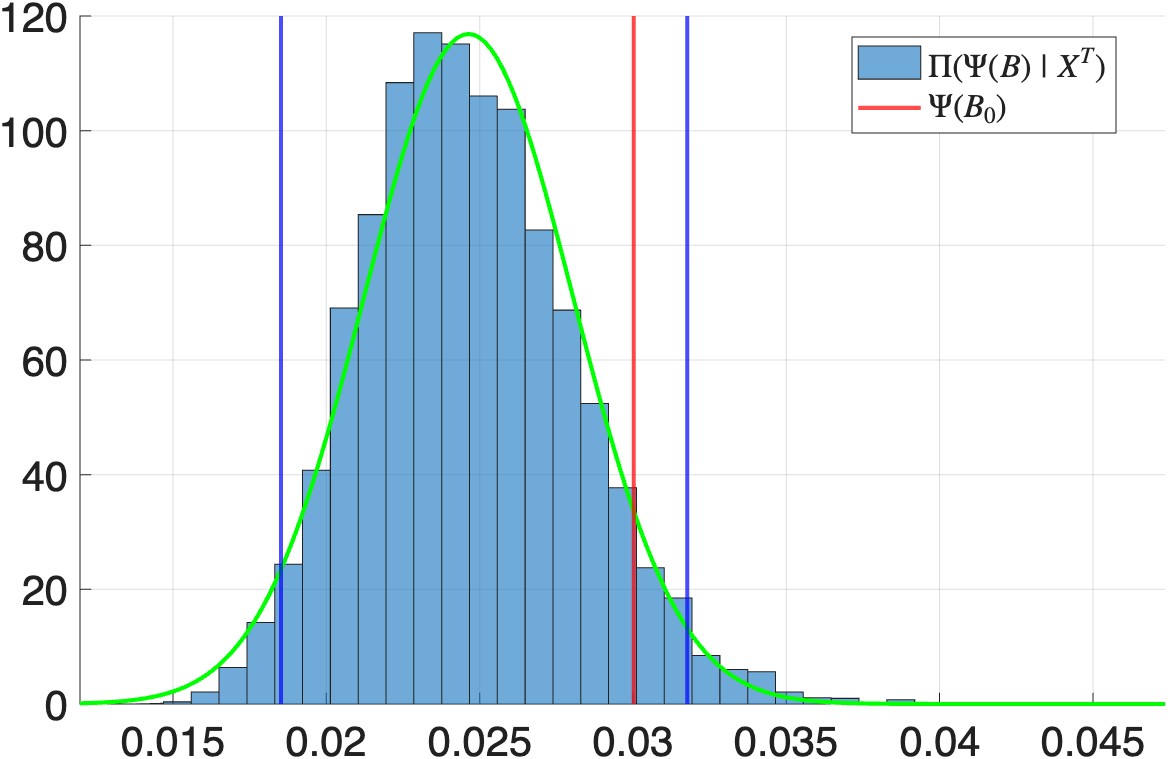}
\includegraphics[width=4.5cm,height=3.5cm]{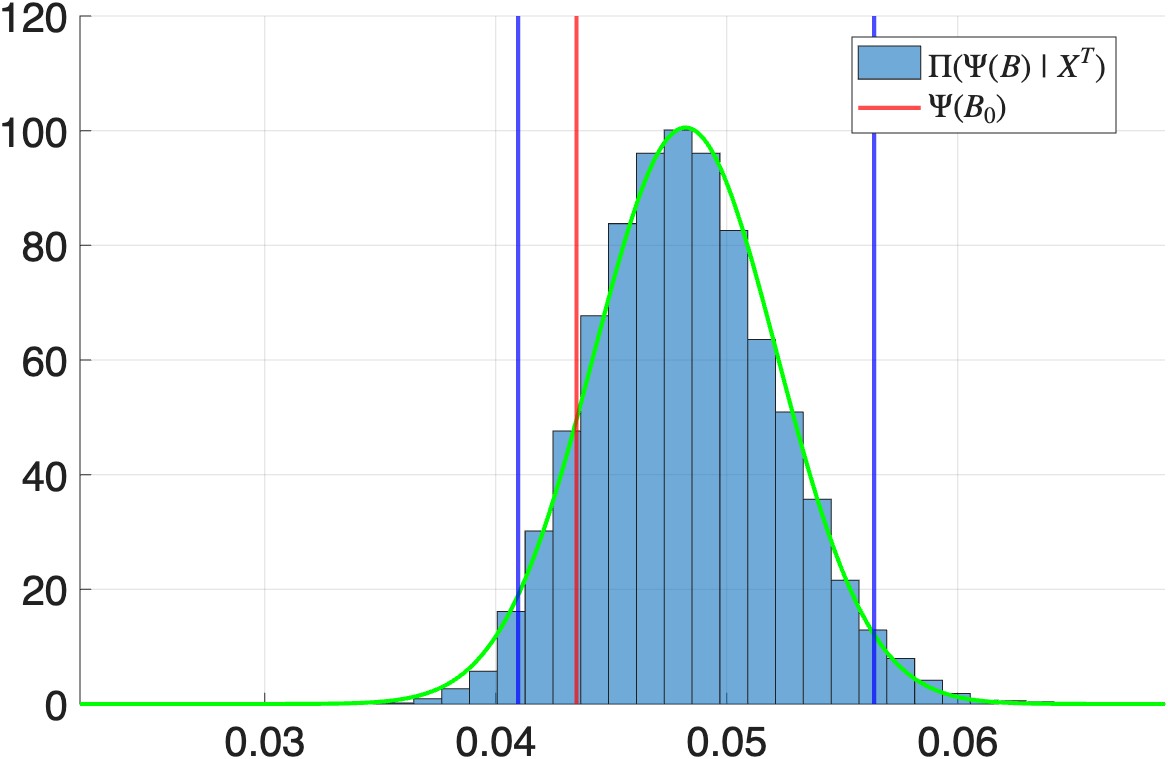}
\includegraphics[width=4.5cm,height=3.5cm]{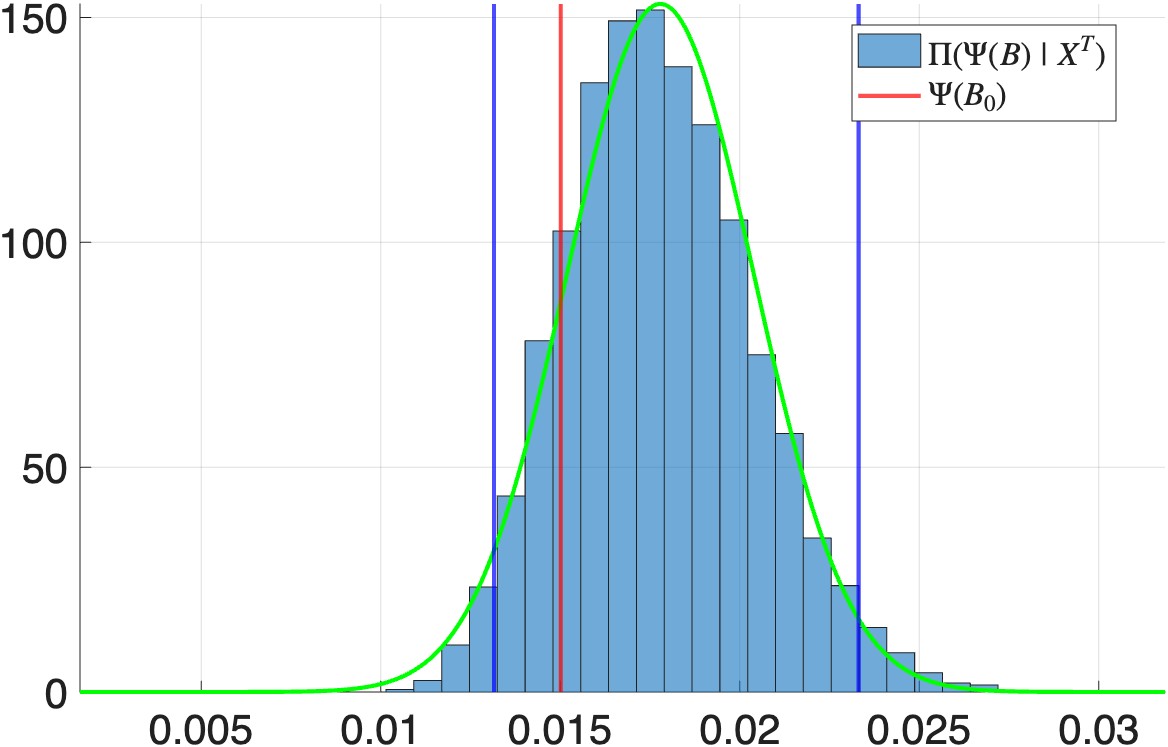}
\includegraphics[width=4.5cm,height=3.5cm]{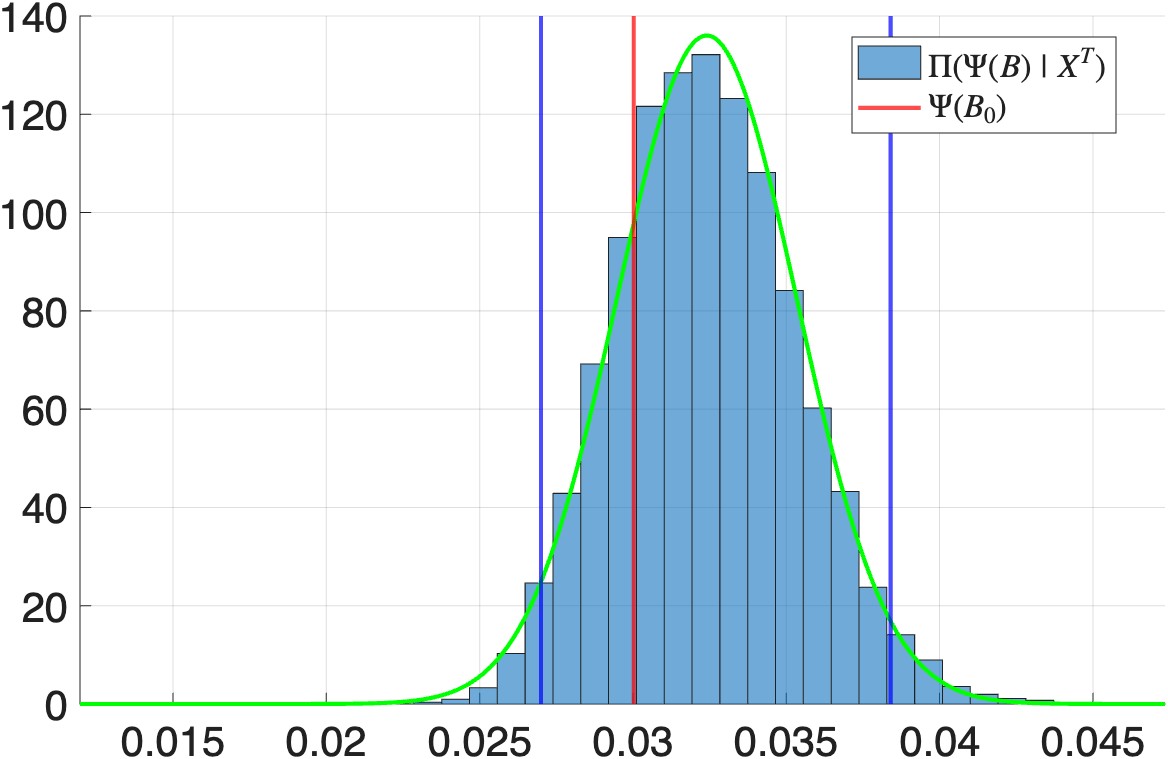}
\caption{Top row: plug-in posterior distributions of $\Psi_i(B)|X^T$, $i=1,2,3$, at $T=50$, based on the Besov-Laplace prior. Bottom row: plug-in posteriors at $T=100$. The vertical red lines indicate the `ground truths' $\Psi_i(B^{(1)})$, for $B^{(1)}$ as in Figure \ref{Fig:Truths} (top-left). The vertical blue lines identify the $95\%$ credible intervals. The green line corresponds to a normal PDF centred at the posterior mean and with variance equal to the posterior variance.}
\label{Fig:LaplPlugInPost}
\end{figure}

\paragraph{MCMC diagnostics.} Across all experiments, each wpCN run was initialized at the `cold start' $\xi_{(0)} = 0$ and iterated for 20,000 steps, with the first 5,000 samples discarded as burn-in. The step-size parameter $b$ was initialized at $0.05$ and adaptively updated every 50 iterations during burn-in using a Robbins–Monro-type update on the logit scale (e.g.~\cite{andrieu2008tutorial}), constrained in the interval $[0.001,0.1]$, so as to target an acceptance rate of approximately 30\%. The final value of $b$ was then kept fixed throughout the sampling phase, see Figure \ref{Fig:wpCNDiagnostics}, left. The resulting chains display stable trace-plots after burn-in and produce consistent posterior summaries across independent runs. Figure \ref{Fig:wpCNDiagnostics}, right, shows the evolution of the log-likelihood along representative wpCN runs for the three different ground truths $B^{(1)},B^{(2)},B^{(3)}$. The log-likelihoods are seen to rapidly move away from the
initialization point and to stabilize, after burn-in, around the log-likelihoods of the ground truths. These diagnostics indicate satisfactory mixing behaviour of the wpCN algorithm and support the reliability of the posterior summaries reported above.

\begin{figure}[t]
\centering
\includegraphics[width=7cm]{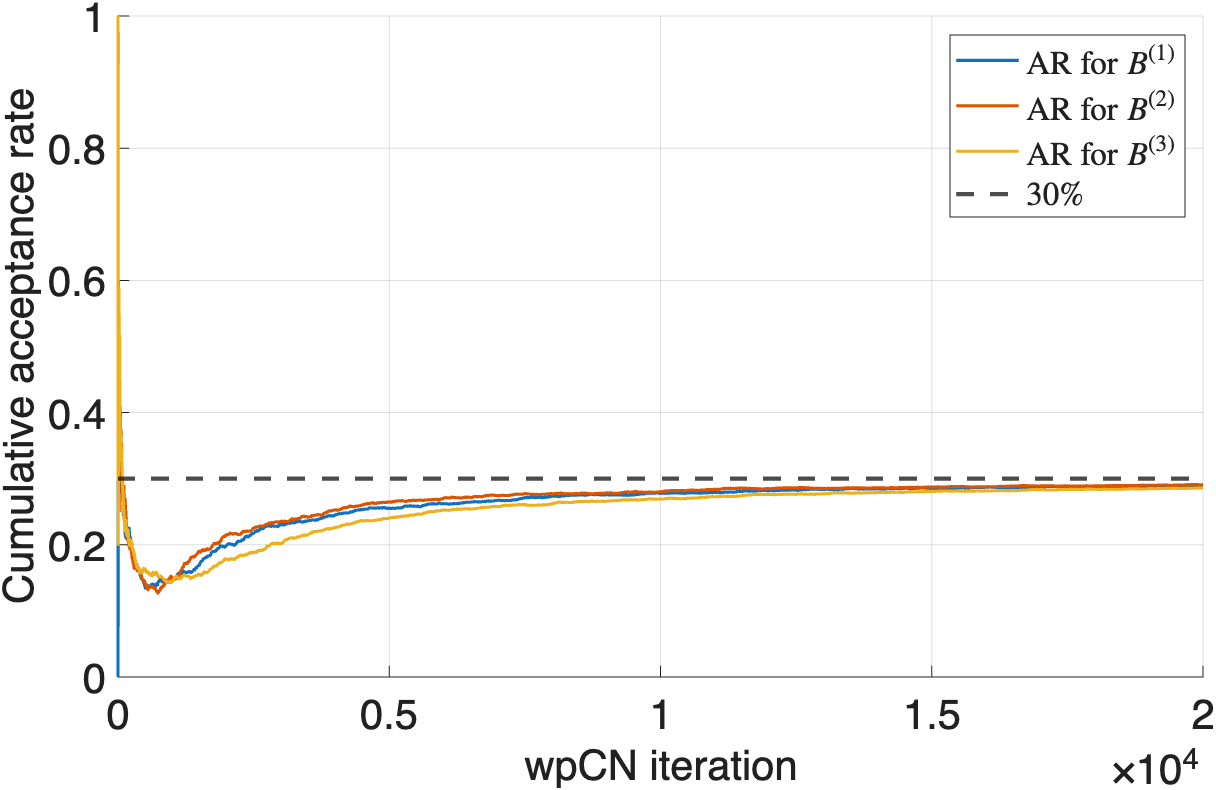}
\includegraphics[width=7cm]{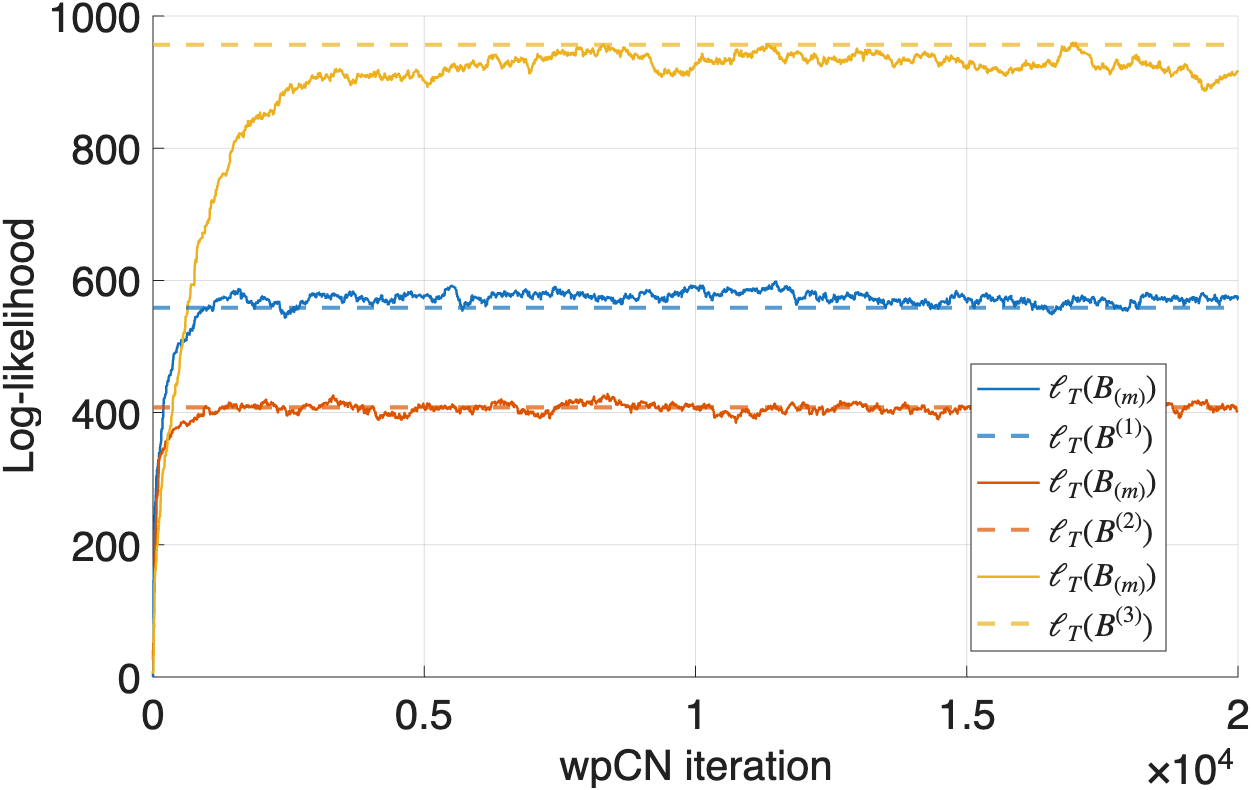}
\caption{
Left: cumulative acceptance rate (AR) along representative wpCN runs for the ground truths $B^{(1)},B^{(2)},B^{(3)}$.
Right: evolution of the log-likelihood along the same wpCN runs. The dashed lines indicate the log-likelihood of the ground truths.
}
\label{Fig:wpCNDiagnostics}
\end{figure}

%
%
%
%
%

\section{Discussion}
\label{Sec:Discussion}

We investigated the semiparametric BvM phenomenon in a  reversible multidimensional diffusion model with continuous-time observations. Our general result, Theorem \ref{thm:BvM}, establishes abstract conditions under which semiparametric Bayesian inference for a large class of possibly nonlinear functionals is asymptotically efficient. The general theory was applied to two important classes of prior distributions, namely Gaussian and Besov-Laplace priors. The practical performance of the resulting procedures was illustrated via simulation studies, with the empirical findings largely consistent with the theoretical predictions.

We conclude by briefly discussing several open research directions, including extensions to smoothness-adaptive procedures, non-periodic diffusion models, and other priors. The first two issues are discussed in the context of posterior contraction rates in \cite{giordano2022nonparametric} -- 
see Section~3 of that paper for a more in-depth treatment.

Our results are non-adaptive since the rescaled Gaussian
and Besov-Laplace priors require correct calibration based on the (typically unknown) regularity $s$ of the truth. A natural Bayesian approach to adaptation is to employ a hierarchical model by endowing the smoothness hyper-parameter with a hyper-prior. However, achieving adaptive contraction rates is already a challenging problem in the present setting due the difficulty stemming from the nonlinearity of the map $B \mapsto \mu_B$, see Section 3.1 of \cite{giordano2020bernstein}. While we expect adaptive contraction rates are possible, the situation is less clear for the semiparametric BvM. In density estimation, when the regularity of the functional and the density are very different, adaptive hierarchical Bayes procedures can already induce non-negligble biases for the centering of the marginal posterior, see \cite{rivoirard2012,castillo2015} for some examples. We expect similar subtleties to occur in our setting, further complicated by the nonlinearity of the problem.

As in \cite{giordano2022nonparametric}, we have restricted to diffusions with periodic coefficients. This assumption guarantees recurrence and mixing of the diffusion while simplifying the elliptic PDE arguments underlying both the posterior contraction and BvM analyses. In particular, periodicity allows one to work with Poisson equations on a compact domain, where standard regularity theory for elliptic PDEs  is readily available. The two most natural alternative ergodic models would entail working on bounded domains endowed with appropriate (reflecting) boundary conditions, or considering diffusions on $\R^d$ satisfying suitable drift conditions that prevent the particle from escaping to infinity. For the first scenario, extensions of our results are plausible in view of the existing regularity theory for elliptic PDEs with Neumann-type boundary conditions. However, sophisticated probabilistic tools are then needed to deal with the additional boundary local-time terms that would arise in the LAN expansion in the presence of reflection at the boundary, see \cite{nickl2024,hoffmann2025}. For diffusions on $\R^d$, the main challenge is instead the loss of compactness of the solution operator of the underlying Poisson equation, which prevents the application of the PDE techniques employed throughout this paper, see Section~3.2 of \cite{giordano2022nonparametric} for a more detailed discussion of these challenges.

Lastly, while we have focused on Gaussian and Besov--Laplace priors, the general framework developed here is not intrinsically tied to these classes. Both the abstract Bernstein--von Mises result in Theorem~\ref{thm:BvM} and the general posterior contraction rate theorem of \cite{giordano2022nonparametric} are formulated under conditions that closely resemble the standard assumptions employed in the asymptotic analysis of nonparametric Bayesian procedures. The main additional requirement in the present setting is the need to uniformly control higher-order norms of the induced invariant measures $\mu_B$, and any prior satisfying this would fit our framework, for example random series priors with bounded coefficients as in \cite{nickl2020bernstein}. Going beyond such priors remains a general challenge for Bayesian nonlinear inverse problems.

\section{Proofs}
\label{Sec:Proofs}

\subsection{Proof of Theorem \ref{thm:BvM}: general semiparametric BvM}

Following the approach of \cite{rivoirard2012,castillo2015}, the proof proceeds by showing that the Laplace transform of the rescaled marginal posterior for $\Psi(B)$ converges to that of the limiting Gaussian distribution, which then implies weak convergence to this Gaussian limit. We first localize to sets on which the full posterior concentrates.

Let $\Dcal_T\subset \dot{C}^2(\T^d)$ be measurable sets such that $\Pi(\Dcal_T|X^T)\overset{P_0}{\longrightarrow} 1$ as $T\to\infty$, and let
\begin{equation}\label{Eq:prior_conditioned}
    \Pi^{\Dcal_T}(\cdot)=\frac{\Pi(\cdot\cap \Dcal_T)}{\Pi(\Dcal_T)}
\end{equation}
denote the prior $\Pi$ conditioned to $\Dcal_T$. The corresponding posteriors satisfy $\|\Pi(\cdot|X^T) - \Pi^{\Dcal_T}(\cdot|X^T)\|_{\textnormal{TV}} \leq 2\Pi(\Dcal_T^c|X^T)$ (e.g.~p142 of \cite{vdvaart1998}), which tends to zero in $P_{0}$-probability as $T\to\infty$. Since convergence in total variation is stronger than weak convergence, it suffices to show the desired result for the posterior based on the conditioned prior $\Pi^{\Dcal_T}$ instead of $\Pi$. 
The next lemma expands the posterior Laplace transform of $\sqrt{T}(\Psi(B)-\widetilde{\Psi}_T)$ for the conditioned prior and a suitable centering $\widetilde{\Psi}_T$:
\begin{equation*}
      E^{\Pi^{\Dcal_T}}
    \Big[e^{u\sqrt T (\Psi(B) - \widetilde\Psi_T) }\Big| X^T\Big] = \frac{\int_{\Dcal_T} e^{u\sqrt T (\Psi(B) - \widetilde\Psi_T)}  e^{\ell_T(B)}d\Pi(B)}{\int_{\Dcal_T} e^{\ell_T(B)} d\Pi(B)}.
\end{equation*}

\begin{lemma}\label{lem:LaplTransAsymp}
Suppose $B_0 \in \dot{C}^{(d/2+1+\eta)\vee 2}(\T^d)$ for some $\eta>0$, and let $\Psi:\dot{C}^2(\T^d) \to \R$ be a functional satisfying the expansion \eqref{Eq:psi_exp} with representor $\psi \in \dot{L}^2(\T^d)$. For $1\le p,q\le\infty$ such that $1/p+1/q=1$, $M>0$ and $\eps_T,\zeta_T,\xi_T \to 0$, let
\begin{align*}
        \Dcal_T \subseteq  & \big\{ B\in \dot C^2(\T^d):\|\nabla B - \nabla B_0\|_{p}\le M \eps_T, ~ \|B-B_0\|_{H^{d/2+1+\eta}}\leq \zeta_T,\\
        & \qquad \|B\|_{H^{d+1+\eta}} \leq M, ~ |r(B,B_0)| \leq \xi_T/\sqrt{T} \big\}.
\end{align*}
Let $\gamma_T\in \dot{H}^{d/2+\eta}(\T^d) \cap \dot{C}^1(\T^d)$ be a sequence of fixed functions such that as $T\to\infty$,
$$
   \|\gamma_T\|_{C^1}\vee \|\gamma_T\|_{H^{(d/2+\eta)\vee 1}}=O(1); \qquad \|A^{-1}_{\mu_0}\psi - \gamma_T\|_{W^{1,q}} = o(1/(\sqrt{T}\eps_T)),
$$
where $A_{\mu_0}$ is the second-order operator \eqref{Eq:A_mu}. For fixed $u\in\R$, define the perturbations $B_u = B - u \gamma_T / \sqrt{T}$.
Then the (localized) posterior Laplace transform satisfies
\begin{align*}\label{Eq:Laplace_trans}
        E^{\Pi^{\Dcal_T}}
    \Big[e^{u\sqrt T (\Psi(B) - \widetilde\Psi_T) }\Big| X^T\Big] = e^{ \frac{u^2}{2T}\int_0^T \|\nabla\gamma_T(X_t)\|^2dt} \frac{\int_{\Dcal_T} e^{\ell_T(B_u)}d\Pi(B)}{\int_{\Dcal_T} e^{\ell_T(B)} d\Pi(B)} (1+o_{P_0}(1))
\end{align*}
as $T \to\infty$, with centering
\begin{equation}\label{Eq:psi_tilde}
     \widetilde{\Psi}_T =
    \Psi(B_0) + \frac{1}{T}\int_0^T\nabla\gamma_T(X_t).dW_t.
\end{equation}
\end{lemma}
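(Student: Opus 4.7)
The plan is to prove the identity by a direct Girsanov/LAN computation of the ratio $e^{u\sqrt T(\Psi(B)-\widetilde\Psi_T)}e^{\ell_T(B)}/e^{\ell_T(B_u)}$, and then to reduce matters to showing that a single error exponent is $o_{P_0}(1)$ uniformly on $\Dcal_T$. Since the log-likelihood \eqref{Eq:likelihood} is quadratic in $\nabla B$, expanding $\ell_T(B_u)-\ell_T(B)$ for $\nabla B_u = \nabla B - u\nabla\gamma_T/\sqrt T$ and substituting $dX_t = \nabla B_0(X_t)dt + dW_t$ under $P_0$ gives the closed-form identity
\begin{align*}
    \ell_T(B_u)-\ell_T(B) &= \frac{u}{\sqrt T}\int_0^T [\nabla B - \nabla B_0].\nabla\gamma_T(X_t) dt \\
    &\quad - \frac{u^2}{2T}\int_0^T \|\nabla\gamma_T(X_t)\|^2 dt - \frac{u}{\sqrt T}\int_0^T \nabla\gamma_T(X_t).dW_t.
\end{align*}
Combining with the expansion \eqref{Eq:psi_exp} and the definition \eqref{Eq:psi_tilde} of $\widetilde\Psi_T$, the $dW_t$-stochastic integrals cancel exactly, so
\begin{align*}
    u\sqrt T(\Psi(B)-\widetilde\Psi_T) + \ell_T(B) - \ell_T(B_u) = \frac{u^2}{2T}\int_0^T\|\nabla\gamma_T(X_t)\|^2 dt + R(B),
\end{align*}
where $R(B) := u\sqrt T\langle\psi, B-B_0\rangle_2 - \frac{u}{\sqrt T}\int_0^T[\nabla B - \nabla B_0].\nabla\gamma_T(X_t)dt + u\sqrt T\, r(B,B_0)$. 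Exponentiating and pulling out the deterministic quadratic factor, the claim reduces to $\sup_{B\in\Dcal_T}|R(B)|\overset{P_0}{\to}0$.

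The functional-remainder piece is handled immediately: $|u\sqrt T\, r(B,B_0)| \leq |u|\xi_T = o(1)$ uniformly on $\Dcal_T$. For the other two contributions, I split the time integral into its ergodic mean and the centred empirical-process fluctuation,
\begin{align*}
    \frac{1}{\sqrt T}\int_0^T[\nabla B - \nabla B_0].\nabla\gamma_T(X_t)dt = \sqrt T\int_{\T^d}\mu_0[\nabla B - \nabla B_0].\nabla\gamma_T\,dx + \G_T\big[[\nabla B - \nabla B_0].\nabla\gamma_T\big],
\end{align*}
and use $\psi = A_{\mu_0}(A_{\mu_0}^{-1}\psi)$ combined with integration by parts on the closed manifold $\T^d$ (no boundary terms, by periodicity) to re-express $\sqrt T \langle \psi, B-B_0\rangle_2$. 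Up to the sign conventions in $A_{\mu_0}^{-1}$, the deterministic contributions then combine into a single weighted inner product involving only the discrepancy $A_{\mu_0}^{-1}\psi - \gamma_T$, which by H\"older's inequality is bounded by
\begin{align*}
    |u|\sqrt T\,\|\mu_0\|_\infty\,\|\nabla(A_{\mu_0}^{-1}\psi-\gamma_T)\|_q\,\|\nabla B - \nabla B_0\|_p \leq |u|M\sqrt T \cdot o(1/(\sqrt T\eps_T)) \cdot \eps_T = o(1)
\end{align*}
uniformly on $\Dcal_T$, by the approximation hypothesis on $\gamma_T$ and the $L^p$-localisation $\|\nabla B - \nabla B_0\|_p \leq M\eps_T$.

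The main obstacle is the uniform control of the empirical-process correction $\sup_{B\in\Dcal_T}|\G_T[(\nabla B - \nabla B_0).\nabla\gamma_T]|$. A single instance has size $O_p(\|f_B\|_{L^2(\mu_0)}) = O_p(\eps_T)$ via the martingale CLT applied to the Poisson-equation representation, but uniformity over the infinite-dimensional set $\Dcal_T$ requires a quantitative diffusion empirical-process inequality. The intended tool is a bound along the lines of Lemma \ref{Lem:EmpProc}: using the Poisson equation $L_{B_0} v = f - \int f d\mu_0$ and It\^o's formula, $|\G_T[f]|$ is controlled by an $L^2(\mu_0)$-norm of $\nabla v$ and hence by an appropriate Sobolev norm of $f$. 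Applied to $f_B := [\nabla B - \nabla B_0].\nabla\gamma_T$ and combined with Sobolev product estimates, the constraints $\|B-B_0\|_{H^{d/2+1+\eta}}\leq \zeta_T$, $\|B\|_{H^{d+1+\eta}}\leq M$ and $\|\gamma_T\|_{H^{(d/2+\eta)\vee 1}}\vee \|\gamma_T\|_{C^1} = O(1)$ then produce a uniform bound on $\|f_B\|$ in the relevant Sobolev norm that is small enough to conclude. Assembling the three estimates yields $\sup_{B\in\Dcal_T}|R(B)| = o_{P_0}(1)$, and factoring out $e^{(u^2/2T)\int_0^T\|\nabla\gamma_T\|^2 dt}$ produces the identity with the multiplicative $(1+o_{P_0}(1))$ factor.
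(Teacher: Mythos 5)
Your proposal is correct and follows essentially the same route as the paper's proof: a LAN/Girsanov expansion of $\ell_T(B)-\ell_T(B_u)$ that cancels the stochastic integral in the centering $\widetilde\Psi_T$, a split of the remaining drift term into its $\mu_0$-ergodic mean plus the fluctuation $\G_T[(\nabla B-\nabla B_0).\nabla\gamma_T]$, the ergodic mean absorbed into $\langle\nabla(B-B_0),\mu_0\nabla(A_{\mu_0}^{-1}\psi-\gamma_T)\rangle_2$ via integration by parts and bounded by H\"older using the $W^{1,q}$ approximation hypothesis, and the fluctuation controlled uniformly over $\Dcal_T$ by exactly the chaining/Poisson-equation bound of Lemma \ref{Lem:EmpProc}, which is the same auxiliary result the paper invokes at that step.
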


\begin{proof}
Using Bayes formula and setting $Z_T = \int_{\Dcal_T} e^{\ell_T(B)} d\Pi(B)$ to be the normalizing constant,
\begin{align*}
    I_T(u) & := E^{\Pi^{\Dcal_T}} [e^{u\sqrt{T}(\Psi(B) - \widetilde{\Psi}_T)}|X^T] \\
    &= e^{-\frac{u}{\sqrt T}\int_0^T\nabla\gamma_T(X_t).dW_t}
    \frac{1}{Z_T} \int_{\Dcal_T}e^{u\sqrt T (\langle B - B_0, \psi\rangle_2 + r(B,B_0)) } e^{\ell_T(B)-\ell_T(B_u)}
    e^{\ell_T(B_u)}d\Pi(B),
\end{align*}
where we have used the functional expansion \eqref{Eq:psi_exp}. Using the LAN expansion given in Lemma \ref{lem:LAN},
\begin{align*}
    \ell_T(B) - \ell_T(B_u) &= \frac{u}{\sqrt T}\int_0^T\nabla\gamma_T(X_t).dW_t 
    + \frac{u^2}{2T}\int_0^T \|\nabla\gamma_T(X_t)\|^2dt \\
    & \quad -\frac{u}{\sqrt T}\int_0^T\nabla(B - B_0)(X_t).\nabla\gamma_T(X_t)dt \\
    &= \frac{u}{\sqrt T}\int_0^T\nabla\gamma_T(X_t).dW_t 
    + \frac{u^2}{2T}\int_0^T \|\nabla\gamma_T(X_t)\|^2dt \\
    & \quad - u\G_T[\nabla(B-B_0).\nabla \gamma_T] - u \sqrt{T} \langle \nabla (B-B_0),\nabla \gamma_T \rangle_{\mu_0}.
\end{align*}
Using this and $\sup_{B \in \Dcal_T} |r(B,B_0)| =o(1/\sqrt{T})$, the second to last display equals
\begin{align*}
    &e^{ \frac{u^2}{2T}\int_0^T \|\nabla\gamma_T(X_t)\|^2dt +o_{P_0}(1)} \\
    & \quad \times \frac{1}{Z_T}
    \int_{\Dcal_T}e^{-u\G_T[\nabla(B-B_0).\nabla \gamma_T]} e^{u\sqrt T [\langle B - B_0, \psi\rangle_2 - \langle \nabla(B-B_0),\nabla \gamma_T \rangle_{\mu_0}]} e^{\ell_T(B_u)}d\Pi(B).
\end{align*}
By Lemma \ref{Lem:EmpProc}, $\sup_{B\in \Dcal_T} |\G_T[\nabla(B-B_0).\nabla \gamma_T]| = o_{P_0}(1)$. Integrating by parts,
\begin{align*}
    \langle\nabla(B-B_0),\nabla\gamma_T\rangle_{\mu_0}
    &= \int_{\T^d}\nabla(B - B_0)(x).
    \nabla\gamma_T(x) \mu_0(x) dx\\
    &=\int_{\T^d}(B - B_0)(x)\nabla\cdot(\mu_0
    \nabla\gamma_T)(x) dx\\
    &=\langle B - B_0, A_{\mu_0}\gamma_T\rangle_2
\end{align*} 
for $A_{\mu_0}$ the second order elliptic operator defined in \eqref{Eq:A_mu}. Therefore, writing $\psi = A_{\mu_0} A_{\mu_0}^{-1}\psi$ (cf.~Lemma \ref{lem:PDE}), for $B \in \Dcal_T$,
\begin{align*}
    \sqrt{T}|\langle B - B_0, \psi\rangle_2 - \langle \nabla(B-B_0),\nabla \gamma_T \rangle_{\mu_0}| &= \sqrt{T} |\langle B-B_0,\psi - A_{\mu_0} \gamma_T \rangle_2| \\
    &=\sqrt T |\langle \nabla(B - B_0), \mu_0\nabla(A^{-1}_{\mu_0}\psi -  \gamma_T) \rangle_2|\\
    &\lesssim 
    \sqrt T \|\mu_0\|_\infty \|\nabla( B - B_0)\|_p 
    \|\nabla(A^{-1}_{\mu_0}\psi - \gamma_T)\|_q\\
    &\lesssim 
    \sqrt T \eps_T \|A^{-1}_{\mu_0}\psi - \gamma_T\|_{W^{1,q}}
    =o(1)
\end{align*}
by assumption and since $\mu_0$ is bounded on $\T^d$. The Laplace transform in question then equals
\begin{align*}
    I_T(u) &= e^{ \frac{u^2}{2T}\int_0^T \|\nabla\gamma_T(X_t)\|^2dt + o_{P_0}(1)}  \frac{1}{Z_T}
    \int_{\Dcal_T} e^{\ell_T(B_u)}d\Pi(B)
\end{align*}
as desired.
\end{proof}

\begin{proof}[Proof of Theorem \ref{thm:BvM}]
    Since $\Dcal_T$ satisfies $\Pi(\Dcal_T|X^T) \overset{P_0}{\longrightarrow} 1$ by assumption, it suffices to prove the result for the prior \eqref{Eq:prior_conditioned} conditioned to $\Dcal_T$ by the argument following that equation. We shall show that the resulting posterior Laplace transform $J_T(u) = E^{\Pi^{\Dcal_T}} [e^{u\sqrt{T}(\Psi(B) - \hat{\Psi}_T)}|X^T]$ converges in probability to $\exp(u^2\|\nabla A_{\mu_0}^{-1}\psi\|_{\mu_0}^2/2)$, which is the Laplace transform of a $N(0,\|\nabla A_{\mu_0}^{-1} \psi\|_{\mu_0}^2)$ distribution, for every $u\in \R$ in a neighbourhood of 0. Since convergence of such conditional Laplace transforms in $P_0$-probability implies conditional convergence in distribution in $P_0$-probability (e.g.~Lemma 1 of the supplement of \cite{castillo2015} or Corollary 2 of \cite{Ray2021}), this will complete the proof.

    By Lemma \ref{Lem:LimCov}, we have $\hat{\Psi}_T = \widetilde{\Psi}_T + o_{P_0}(1/\sqrt{T})$, so we may replace the centering in $J_T(u)$ by $\widetilde{\Psi}_T$ defined in \eqref{Eq:psi_tilde} at the cost of a multiplicative $e^{o_{P_0}(1)}$ term. The resulting Laplace transform is then exactly the one considered in Lemma \ref{lem:LaplTransAsymp}. Since the sets $\Dcal_T$ and functions $(\gamma_T)$ in the present theorem  satisfy the conditions of Lemma \ref{lem:LaplTransAsymp}, that lemma implies
    $$
        J_T(u)  = e^{ \frac{u^2}{2T}\int_0^T \|\nabla\gamma_T(X_t)\|^2dt + o_{P_0(1)}} \frac{\int_{\Dcal_T} e^{\ell_T(B_u)}d\Pi(B)}{\int_{\Dcal_T} e^{\ell_T(B)} d\Pi(B)}.
    $$
Applying Lemma \ref{Lem:LimCov} to the first term (since $\|\nabla A_{\mu_0}^{-1}\psi - \nabla\gamma_T\|_{\mu_0}\lesssim \| A_{\mu_0}^{-1}\psi - \gamma_T\|_{W^{1,q}}=o(1)$ for $q\ge2$) and using assumption \eqref{Eq:change_of_measure} for the second, we conclude $J_T(u) = e^{u^2\|\nabla A_{\mu_0}^{-1}\psi\|_2^2/2 + o_{P_0}(1)}$.
The theorem then follows from the convergence of Laplace transforms.
\end{proof}

\subsection{Auxiliary results}

In this section, we present technical results that are used in the main proofs. We have the following local asymptotic normality (LAN) expansion.

\begin{lemma}\label{lem:LAN}
    If $B_0 \in C^{(d/2+1+\kappa)\vee 2}(\T^d)$ and $h \in H^{d/2+1+\kappa}$ for some $\kappa>0$, then
    $$\ell_T(B_0 + h/\sqrt{T}) - \ell_T(B_0) = W_T(h) - \frac{1}{2T} \int_0^T \|\nabla h(X_t)\|^2 dt,$$
    where, under $P_{B_0}$ and as $T\to\infty$,
    \begin{align*}
        & W_T(h) = \frac{1}{\sqrt{T}}\int_0^T \nabla h(X_t).dW_t \to^d N(0,\|\nabla h\|_{\mu_0}^2), \\
        & \frac{1}{2T} \int_0^T \|\nabla h(X_t)\|^2 dt \to^{P_{B_0}} \frac{1}{2} \|\nabla h \|_{\mu_0}^2.
    \end{align*}
\end{lemma}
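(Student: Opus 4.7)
The plan is to obtain the LAN expansion by direct substitution into the Girsanov log-likelihood and then invoke standard ergodic and martingale central limit theorems.

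First, I would plug $B=B_0+h/\sqrt T$ into the formula for $\ell_T$ in \eqref{Eq:likelihood} and expand
$\|\nabla B\|^2 = \|\nabla B_0\|^2 + (2/\sqrt T)\nabla B_0.\nabla h + (1/T)\|\nabla h\|^2$, giving
\begin{align*}
\ell_T(B_0+h/\sqrt T) - \ell_T(B_0)
= & - \frac{1}{\sqrt T}\int_0^T \nabla B_0(X_t).\nabla h(X_t)\, dt
- \frac{1}{2T}\int_0^T \|\nabla h(X_t)\|^2 dt \\
& + \frac{1}{\sqrt T}\int_0^T \nabla h(X_t).dX_t.
\end{align*}
Substituting $dX_t = \nabla B_0(X_t)dt + dW_t$ in the final stochastic integral, the drift contribution cancels the first term and we obtain the claimed identity. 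The smoothness assumptions on $B_0$ and $h$, together with the Sobolev embedding $H^{d/2+1+\kappa}(\T^d)\hookrightarrow C^1(\T^d)$, ensure $\nabla h$ is continuous on $\T^d$ and all integrals are well defined.

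For the quadratic term, the same Sobolev embedding yields $\|\nabla h\|^2 \in C(\T^d)$. Applying the ergodic theorem for the periodic diffusion $X$ viewed on $\T^d$ (valid under the regularity $B_0\in C^2(\T^d)$; this is exactly the convergence invoked just before \eqref{Eq:invariant_measure}, cf.~Lemma 6 of \cite{nicklray2020}), we get
\begin{equation*}
\frac{1}{T}\int_0^T \|\nabla h(X_t)\|^2 dt \to^{P_{B_0}} \int_{\T^d} \|\nabla h(x)\|^2 d\mu_0(x) = \|\nabla h\|_{\mu_0}^2.
\end{equation*}

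For the CLT, the process $M_T := \int_0^T \nabla h(X_t).dW_t$ is a continuous local martingale under $P_{B_0}$ with quadratic variation $\langle M\rangle_T = \int_0^T \|\nabla h(X_t)\|^2 dt$. The previous display gives $\langle M\rangle_T/T \to^{P_{B_0}} \|\nabla h\|_{\mu_0}^2$, so by the martingale central limit theorem for continuous local martingales (e.g.~Theorem VIII.3.11 of Jacod--Shiryaev), $W_T(h) = M_T/\sqrt T \to^d N(0,\|\nabla h\|_{\mu_0}^2)$, as claimed. None of the three steps is substantial; the only mild subtlety is justifying the ergodic theorem on the torus, but this is already used in the paper's setup and follows from the reversibility of $X$ and the $C^2$ regularity of $B_0$.
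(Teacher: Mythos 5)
Your proof is correct: the algebraic identity follows exactly as you write it from Girsanov's formula and the $P_{B_0}$-semimartingale decomposition of $X$, the Sobolev embedding guarantees $\nabla h$ is continuous so the ergodic theorem on the torus applies to $\|\nabla h\|^2$, and the martingale CLT then gives the asymptotic normality of $W_T(h)$. The paper itself gives no derivation at all --- its proof is the single line ``this is a specific instance of Lemma 6 of \cite{nickl2020nonparametric}'' --- so your argument is the standard self-contained version of exactly what that citation encapsulates, and there is no substantive difference in approach.
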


\begin{proof}
This is a specific instance of Lemma 6 of \cite{nickl2020nonparametric}.
\end{proof}

We next require some additional definitions. Let $L_{B_0}:H^2(\T^d) \to L^2(\T^d)$ be the generator of the diffusion \eqref{Eq:SDE} given by
\begin{align}\label{Eq:generator}
Lu = L_{B_0} u = \frac{1}{2}\Delta u + \nabla B_0.\nabla u.
\end{align}
Further equip $L_{\mu_0}^2 (\T^d) = \left\{ f \in L^2(\T^d): \int_{\T^d} f(x) d\mu_0(x) = 0\right\}$ with the corresponding pseudo-distance
$$d_L^2(f,g) = \sum_{i=1}^d \left\| \partial_{x_i} L_{B_0}^{-1}[f-g] \right\|_\infty^2,$$
which is well-defined since the solution map $L_{B_0}^{-1}:L_{\mu_0}^2(\T^d) \to \dot{H}^2(\T^d)$ acts on $L_{\mu_0}^2(\T^d)$ as soon as $B \in C^2$, see Section 6 of \cite{nickl2020nonparametric} for details. Let $N(\Fcal,d,\tau)$ denote the covering number of $\Fcal$, i.e.~the minimal number of $d$-balls of radius $\tau$ needed to cover a set $\Fcal$. Further define
\begin{align}\label{Eq:entropy_int}
J_\Fcal = \int_0^{D_\Fcal} \sqrt{2\log 2N(\Fcal,6d_L,\tau)} d\tau,
\end{align}
where $D_\Fcal$ is the $d_L$-diameter of $\Fcal$. Using these quantities, we obtain the following lemma controlling the remainder term in the LAN expansion, uniformly over a function class. The constant $\eta>0$ below can be arbitrarily small and does not affect the required regularity in a significant way.

\begin{lemma}\label{Lem:EmpProc}
For some $\eta>0$, suppose $B_0 \in C^{(d/2+\eta)\vee 2}(\T^d)$ and for $M>0$ and $\zeta_T \to 0$, let
$$\Dcal_T \subseteq \{ B \in \dot{C}^2(\T^d): \|B\|_{H^{d+1+\eta}} \leq M, ~ \|B-B_0\|_{H^{d/2+1+\eta}}\leq \zeta_T \}.$$
Let $\gamma_T\in \dot{H}^{d/2+\eta}(\T^d) \cap \dot{C}^1(\T^d)$ be (a sequence of) fixed functions and suppose
$$\Gamma := \limsup_{T \to \infty} \max(\|\gamma_T\|_{C^1},\|\gamma_T\|_{H^{(d/2+\eta)\vee 1}} )<\infty.$$
Then as $T\to\infty$,
$$E_0 \sup_{B\in \Dcal_T} |\G_T[\nabla (B-B_0).\nabla\gamma_T]|  \to 0.$$
\end{lemma}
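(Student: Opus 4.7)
The plan is to apply the maximal inequality for empirical processes of reversible diffusions developed in Section 6 of \cite{nickl2020nonparametric}, which controls $E_0\sup_{f\in \Fcal}|\G_T[f]|$ by the Dudley entropy integral $J_\Fcal$ from \eqref{Eq:entropy_int}. Setting $\Fcal_T := \{\nabla(B-B_0).\nabla\gamma_T : B\in \Dcal_T\}$, the task thus reduces to showing $J_{\Fcal_T}\to 0$ as $T\to\infty$, which in turn splits into bounding the $d_L$-diameter $D_{\Fcal_T}$ and the $d_L$-covering numbers of $\Fcal_T$.

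Both bounds will follow from a single chain combining elliptic regularity for $L_{B_0}^{-1}$, Sobolev embedding, and a product Sobolev estimate. I would fix a small buffer $0<\eta'<\eta$. For $f_B = \nabla(B-B_0).\nabla\gamma_T$ and the analogous $f_{B'}$, linearity gives $f_B-f_{B'}=\nabla(B-B').\nabla\gamma_T$, and the boundedness of $L_{B_0}^{-1}:\dot H^{(d/2-1+\eta')_+}\to \dot H^{d/2+1+\eta'}$ (by classical periodic elliptic theory, using $B_0\in C^{(d/2+1+\eta)\vee 2}$) combined with the Sobolev embedding $H^{d/2+1+\eta'}\hookrightarrow C^1$ yields
\begin{equation*}
    d_L(f_B, f_{B'}) \lesssim \|L_{B_0}^{-1}(f_B-f_{B'})\|_{C^1}\lesssim \|f_B-f_{B'}\|_{H^{(d/2-1+\eta')_+}}.
\end{equation*}
A product Sobolev inequality $\|uv\|_{H^s}\lesssim \|u\|_{H^{s_1}}\|v\|_{H^{s_2}}$ (valid when $s_i\ge s$ and $s_1+s_2>s+d/2$), applied with $u=\nabla(B-B')$, $v=\nabla\gamma_T$, $s=(d/2-1+\eta')_+$, $s_1 = d/2+\eta'$, $s_2=(d/2-1+\eta')_+$, together with $\|\gamma_T\|_{H^{(d/2+\eta)\vee 1}}\le \Gamma$, then yields
\begin{equation*}
    d_L(f_B, f_{B'})\lesssim \Gamma\, \|B-B'\|_{H^{d/2+1+\eta'}}.
\end{equation*}

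Specialising to $B'=B_0$ and using $\|B-B_0\|_{H^{d/2+1+\eta'}}\le \|B-B_0\|_{H^{d/2+1+\eta}}\le \zeta_T$ yields $D_{\Fcal_T}\lesssim \zeta_T \Gamma\to 0$. For the entropy, the Lipschitz bound above reduces $d_L$-coverings of $\Fcal_T$ to $\|\cdot\|_{H^{d/2+1+\eta'}}$-coverings of $\Dcal_T$; since $\Dcal_T$ is bounded in the strictly stronger space $H^{d+1+\eta}$, classical Edmunds--Triebel metric entropy estimates for Sobolev balls give
\begin{equation*}
    \log N(\Dcal_T, \|\cdot\|_{H^{d/2+1+\eta'}}, \tau)\lesssim \tau^{-\alpha},
    \qquad \alpha:= \frac{d}{d/2+(\eta-\eta')}<2,
\end{equation*}
so that substituting into the Dudley integral gives $J_{\Fcal_T}\lesssim \int_0^{C\zeta_T\Gamma}\tau^{-\alpha/2}\,d\tau\lesssim \zeta_T^{1-\alpha/2}\to 0$.

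The main obstacle I expect is obtaining the \emph{strictly} subquadratic entropy exponent $\alpha<2$. The naive choice $\eta'=\eta$, matching exactly the Sobolev index arising in the $d_L$-control via the PDE, would give $\alpha=2$ and a logarithmically divergent Dudley integral at the origin. The buffer $\eta'<\eta$ is available precisely because $\Dcal_T$ is controlled in $H^{d+1+\eta}$ (half a derivative stronger than the $H^{d/2+1+\eta}$ norm in which the $d_L$-metric naturally lives via elliptic regularity), and exploiting this gap is what yields convergence of the entropy integral.
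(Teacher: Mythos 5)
Your proposal is correct and follows essentially the same route as the paper's proof: the same maximal inequality from \cite{nickl2020nonparametric}, the same chain $d_L(f_B,f_{B'})\lesssim \|f_B-f_{B'}\|_{H^{(d/2-1+\kappa)_+}}\lesssim \Gamma\|B-B'\|_{H^{d/2+1+\kappa}}$ via elliptic regularity plus a product estimate (the paper invokes Runst--Sickel where you use the bilinear Sobolev inequality), and crucially the same buffer $\kappa=\eta'<\eta$, exploiting the gap between the $H^{d+1+\eta}$ bound on $\Dcal_T$ and the $H^{d/2+1+\cdot}$ Lipschitz metric to make the entropy exponent strictly subquadratic. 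The only details you gloss over are that the function class must be $\mu_0$-centered so that $L_{B_0}^{-1}$ is applicable (contributing a harmless extra term $|\langle\nabla(B-\bar B),\nabla\gamma_T\rangle_{\mu_0}|$ to the product estimate), and that the maximal inequality carries an additional term $T^{-1/2}\sup_B\|L_{B_0}^{-1}g_B\|_\infty$, which your estimates already control by $O(\Gamma\zeta_T/\sqrt{T})$; also note the lemma only assumes $B_0\in C^{(d/2+\eta)\vee 2}$, which suffices for the elliptic estimate you need, rather than the $C^{(d/2+1+\eta)\vee 2}$ you cite.
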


\begin{proof}
Applying Lemma 1 of \cite{nickl2020nonparametric} with the $\mu_0$-centered function class
$$\Fcal_T=\left\{ g_{B}(x) = \nabla(B-B_0)(x).\nabla\gamma_T(x) - \langle \nabla(B-B_0),\nabla\gamma_T\rangle_{\mu_0}:  B\in\Dcal_T \right\}\cup\{0\},$$
the expected supremum under consideration is upper bounded as
\begin{align}\label{Eq:entropy+remainder}
    E_0 \sup_{B \in \Dcal_T} |\G_T[g_B]| \leq \frac{2}{\sqrt T}\sup_{g_{B}\in\Fcal_T}
    \|L_{B_0}^{-1} g_{B}\|_\infty
    +4\sqrt 2 J_{\Fcal_T}
\end{align}
with $J_{\Fcal_T}$ defined in \eqref{Eq:entropy_int}. Turning to the entropy integral $J_{\Fcal_T}$, for all $B,\bar B\in\Dcal_T$, using the Sobolev embedding $H^{d/2+\kappa} \hookrightarrow L^\infty$ for any $\kappa>0$  and the PDE estimate in Lemma A.1 of \cite{giordano2022nonparametric},
\begin{align*}
    d_{L}(g_B,g_{\bar B}) &\lesssim \|L_{B_0}^{-1}[g_B - g_{\bar B}] \|_{H^{d/2+1+\kappa}} \lesssim \|g_B - g_{\bar B}\|_{H^{d/2-1+\kappa}}
\end{align*}
for $0<\kappa< \min(\eta,1/2)$. Therefore, using the Runst-Sickel lemma (\cite{runst1996}, p.~345 or Lemma 2 of \cite{nickl2020nonparametric}),
\begin{align*}
    d_{L}(g_B,g_{\bar B}) & \lesssim \| \nabla (B-\bar{B}). \nabla \gamma_T - \langle \nabla (B-\bar{B}), \nabla \gamma_T \rangle_{\mu_0} \|_{H^{(d/2-1+\kappa)_+}} \\
    & \lesssim \|\nabla(B-\bar{B})\|_{H^{(d/2-1+\kappa)_+}}\|\nabla\gamma_T\|_{\infty} + \|\nabla(B-\bar{B})\|_\infty \|\nabla \gamma_T\|_{H^{(d/2-1+\kappa)_+}} \\
    & \qquad + |\langle \nabla (B-\bar{B}), \nabla \gamma_T \rangle_{\mu_0}| \|1\|_{H^{(d/2-1+\kappa)_+}} \\
    & \lesssim \|B-\bar{B}\|_{H^{d/2+1+\kappa}} \max(\|\gamma_T\|_{C^1},\|\gamma_T\|_{H^{(d/2+\kappa)\vee 1}} ) \\
    & \lesssim \Gamma \|B-\bar{B}\|_{H^{d/2+1+\kappa}}
\end{align*}
for $T>0$ large enough. Since $0<\kappa <\eta$, the $d_L$-diameter of $\Fcal_T$ thus satisfies $D_{\Fcal_T} \lesssim \Gamma \zeta_T \to 0$ by assumption. Writing $H^r_1$ for the unit ball of $H^r(\T^d)$, the metric entropy is then bounded as, for $T>0$ large enough,
\begin{align*}
    \log N(\Fcal_T,6d_L,\tau)
    & \leq \log N(\Dcal_T,C\Gamma \|\cdot\|_{H^{d/2+1+\kappa}},\tau) \\
    & \leq \log N( M H_1^{d+1+\eta}, \|\cdot\|_{C^{d/2+1+\kappa}},\tau/(C'\Gamma ))\\
    &\lesssim \left( \frac{C'M\Gamma}{\tau} \right)^{\frac{d}{(d+1+\eta)-(d/2+1+\kappa)}} ,
\end{align*}
where the last inequality follows by arguing as in the proof of Theorem 4.3.36 in \cite{ginenickl2016} as soon as $(d+1+\eta)-(d/2+1+\kappa)>d/2$, i.e.~$0<\kappa<\eta$. This yields
\begin{align*}
J_{\Fcal_T} &\lesssim \int_0^{D_{\Fcal_T}}  \sqrt{\log 2} + (M\Gamma /\tau)^{\frac{d/2}{d/2+\eta-\kappa}} d\tau \lesssim D_{\Fcal_T} +  D_{\Fcal_T}^{\frac{\eta-\kappa}{d/2+\eta-\kappa}} \to 0.
\end{align*}
The second term in \eqref{Eq:entropy+remainder} is thus $o(1)$ as $T\to\infty$. For the first term, arguing as above, for all $B \in \Dcal_T$ and $\kappa>0$ small enough,
\begin{align*}
\| L_{B_0}^{-1}g_{B}\|_\infty & \lesssim \|g_B\|_{H^{(d/2-2+\kappa)_+}} \\
& \lesssim \|\nabla(B-B_0)\|_{H^{(d/2-2+\kappa)_+}} \|\nabla \gamma_T\|_\infty + \|\nabla(B-B_0)\|_\infty \|\nabla \gamma_T\|_{H^{(d/2-2+\kappa)_+}} \\
& \lesssim \Gamma \|B-B_0\|_{H^{d/2+1+\kappa}} \lesssim \Gamma \zeta_T \to 0,
\end{align*}
where we have used the Sobolev embedding theorem, Lemma A.1 of \cite{giordano2022nonparametric} and Lemma 2 of \cite{nickl2020nonparametric}. This shows that the right side of \eqref{Eq:entropy+remainder} tends to zero as $T\to\infty$, completing the proof.
\end{proof}

We require the following $L^2(P_0)$-bound for averages of square functions of the diffusion process.

\begin{lemma}\label{Lem:square_functionals}
Suppose $B_0 \in C^{(d/2+1+\kappa)\vee 2}(\T^d)$ and $h\in H^{d/2+\kappa}(\T^d)$ for some $\kappa>0$. Then
\begin{align*}
E_0 \left( \frac{1}{T} \int_0^T h(X_s)^2 ds - \int_{\T^d} h(x)^2 d\mu_0(x) \right)^2 \leq C \left( \frac{1}{T} \|h\|_{H^{d/2+\kappa}}^2 + \frac{1}{T^2} \|h\|_{H^{d/2+\kappa}}^4\right) ,
\end{align*}
where $C$ depends only on $d,\kappa$ and $\|B_0\|_{C^{d/2+1+\kappa}}$.
\end{lemma}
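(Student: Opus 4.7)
My plan is to prove this via the standard Poisson equation / Itô's formula approach for controlling variances of diffusion time averages, combined with the elliptic regularity estimates already used in Lemma \ref{Lem:EmpProc}.

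First I would center the integrand: set $f := h^2 - \int_{\T^d} h^2 d\mu_0$, so $f$ has $\mu_0$-mean zero. Since $B_0\in C^{(d/2+1+\kappa)\vee 2}(\T^d)$, the mapping properties of $L_{B_0}^{-1}$ recalled in the proof of Lemma \ref{Lem:EmpProc} ensure that the Poisson equation $L_{B_0}u=f$ admits a unique centered solution $u\in \dot H^2(\T^d)$, with the quantitative estimate
$$
\|u\|_{H^{d/2+1+\kappa}}\;\lesssim\;\|f\|_{H^{(d/2-1+\kappa)_+}}
$$
from Lemma A.1 of \cite{giordano2022nonparametric}. Then Itô's formula applied to $u(X_t)$, using that $L_{B_0}$ is the generator of \eqref{Eq:SDE}, gives
$$
u(X_T)-u(X_0)=\int_0^T f(X_s)\,ds + M_T, \qquad M_T:=\int_0^T \nabla u(X_s).dW_s,
$$
so that $\tfrac{1}{T}\int_0^T f(X_s)ds = T^{-1}(u(X_T)-u(X_0)) - T^{-1}M_T$. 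Squaring and using $(a+b)^2\le 2a^2+2b^2$ together with $E_0[M_T^2]=E_0\int_0^T\|\nabla u(X_s)\|^2 ds$ (Itô isometry), this yields
$$
E_0\Bigl(\tfrac{1}{T}\int_0^T f(X_s)ds\Bigr)^2 \;\le\; \frac{8\|u\|_\infty^2}{T^2}+\frac{2\,E_0\int_0^T\|\nabla u(X_s)\|^2ds}{T^2}.
$$

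The remaining task is to convert these into bounds in terms of $\|h\|_{H^{d/2+\kappa}}$. The martingale term I would bound crudely by $T\|\nabla u\|_\infty^2$ (giving the $1/T$ contribution), while the boundary term already has $1/T^2$. For the norms of $u$, by Sobolev embedding $H^{d/2+\kappa}\hookrightarrow L^\infty$ we have $\|u\|_\infty \vee \|\nabla u\|_\infty\lesssim \|u\|_{H^{d/2+1+\kappa}}$. Combining with the elliptic estimate above reduces everything to controlling $\|h^2\|_{H^{(d/2-1+\kappa)_+}}$ by $\|h\|_{H^{d/2+\kappa}}^2$, which follows from the Runst–Sickel multiplier lemma (\cite{runst1996}, p.~345, or Lemma 2 of \cite{nickl2020nonparametric}): since $H^{d/2+\kappa}$ is a Banach algebra,
$$
\|h^2\|_{H^{(d/2-1+\kappa)_+}}\;\lesssim\;\|h^2\|_{H^{d/2+\kappa}}\;\lesssim\;\|h\|_{H^{d/2+\kappa}}^2,
$$
and the constants depend only on $d,\kappa,\|B_0\|_{C^{(d/2+1+\kappa)\vee 2}}$ (through the elliptic constant). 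Assembling the pieces gives the claimed bound.

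The main technical point to watch out for is the $d=1$ case, where $(d/2-1+\kappa)_+ = 0$ for small $\kappa$ and one has to bound $\|h^2\|_{L^2}\lesssim \|h\|_\infty\|h\|_2\lesssim \|h\|_{H^{1/2+\kappa}}^2$ directly via Sobolev embedding rather than the algebra property. A second subtlety is that the bound $E_0 M_T^2\le T\|\nabla u\|_\infty^2$ is crude (it a priori produces a $\|h\|_{H^{d/2+\kappa}}^4/T$ term rather than $\|h\|_{H^{d/2+\kappa}}^2/T$); the sharper $\|h\|_{H^{d/2+\kappa}}^2/T$ scaling in the statement then follows after absorbing one factor of $\|h\|_{H^{d/2+\kappa}}^2$ via the Dirichlet form identity $\|\nabla u\|_{\mu_0}^2=-2\int fu\,d\mu_0$ (which, combined with $\|u\|_\infty\lesssim\|h\|^2$ and $\int h^2 d\mu_0\lesssim \|h\|_2^2\le \|h\|_{H^{d/2+\kappa}}^2$, gives the required refined control on $E_0M_T^2$). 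This is the main obstacle; everything else is standard elliptic theory and Sobolev multiplication.
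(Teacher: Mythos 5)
Your proposal follows exactly the paper's route: center the integrand, solve the Poisson equation $L_{B_0}u=f$, apply It\^o's formula to split the time average into a boundary term and a martingale, bound the boundary term by $\|u\|_\infty^2/T^2$, and control the martingale via It\^o's isometry, with the product estimate $\|h^2\|\lesssim\|h\|^2$ supplying the link back to $\|h\|_{H^{d/2+\kappa}}$. So the architecture is right; the issues are in two of the quantitative steps.

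First, the elliptic estimate you invoke, $\|u\|_{H^{d/2+1+\kappa}}\lesssim\|f\|_{H^{(d/2-1+\kappa)_+}}$, only yields $u\in C^{1}$ by Sobolev embedding, which is not enough to apply It\^o's formula classically. The paper instead uses $\|L_{B_0}^{-1}f_h\|_{H^{d/2+2+\kappa}}\lesssim\|f_h\|_{H^{d/2+\kappa}}$ (Lemma 11 of \cite{nickl2020nonparametric}), so that $u\in C^2$; the input $\|f_h\|_{H^{d/2+\kappa}}\lesssim\|h\|_{H^{d/2+\kappa}}^2$ is exactly the algebra property you already cite, and since $H^{d/2+\kappa}(\T^d)$ is an algebra in \emph{every} dimension, your separate $d=1$ workaround is unnecessary once you work at this regularity level. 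Second, and more substantively, the Dirichlet-form refinement you propose for the martingale term does not deliver what you claim: $-2\int fu\,d\mu_0\le 2\|u\|_\infty\int|f|\,d\mu_0\lesssim\|h\|_{H^{d/2+\kappa}}^2\cdot\|h\|_{H^{d/2+\kappa}}^2$, i.e.\ still degree four in $h$ --- as it must be, since the left-hand side of the lemma is homogeneous of degree four in $h$, so no bound of order $\|h\|^2/T$ can hold uniformly in $h$ (the displayed first term is best read as $\|h\|_{H^{d/2+\kappa}}^4/T$). Moreover, the process starts at a fixed point rather than in stationarity, so $E_0\int_0^T\|\nabla u(X_s)\|^2\,ds$ is not $T\|\nabla u\|_{\mu_0}^2$ without a further Poisson-equation argument. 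The paper simply uses the crude bound $E_0\int_0^T\|\nabla u(X_s)\|^2\,ds\le T\|\nabla u\|_\infty^2\lesssim T\|h\|_{H^{d/2+\kappa}}^4$ here, and this suffices: in every application of the lemma (e.g.\ Lemma \ref{Lem:LimCov}) one has $\|h\|_{H^{d/2+\kappa}}=O(1)$, so the exponent on $\|h\|$ is immaterial. You should therefore drop the attempted refinement rather than try to repair it.
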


\begin{proof}
Since $t \mapsto t^2$ is a smooth map, the function $f_h(x) = h(x)^2 - \|h\|_{\mu_0}^2 \in L_{\mu_0}^2(\T^d) \cap H^{d/2+\kappa}(\T^d) \subset C(\T^d)$, and moreover,
$$\|f_h\|_{H^{d/2+\kappa}} \leq C \|h\|_{H^{d/2+\kappa}}^2 + \|h\|_{\mu_0}^2\|1\|_{H^{d/2+\kappa}} \leq C' \|h\|_{H^{d/2+\kappa}}^2 $$
for a constant $C'=C'(d,\kappa,\|\mu_0\|_\infty)$ since $H^{d/2+\kappa}$ is an algebra for $\kappa>0$. Let $L^{-1} = L_{B_0}^{-1}$ be the inverse of the generator $L$ defined in \eqref{Eq:generator}, see Section 6 of \cite{nickl2020nonparametric} for its construction. Lemma 11 of \cite{nickl2020nonparametric} implies that $f_h = LL^{-1}[f_h]$ everywhere and $\|L^{-1}[f_h]\|_{H^{d/2+2+\kappa}} \leq C(d,\|B_0\|_{C^{d/2+1+\kappa}}) \|f_h\|_{H^{d/2+\kappa}}$. By the Sobolev embedding theorem, $L^{-1}[f_h] \in C^2$ and so we may apply It\^o's formula to obtain
\begin{align*}
\int_0^T f_h(X_s) ds &= \int_0^T LL^{-1}[f_h](X_s) ds \\
&= L^{-1}[f_h](X_T)-L^{-1}[f_h](X_0) - \int_0^T \nabla L^{-1}[f_h](X_s).dW_s.
\end{align*}
For the first term, we use the bound $\|L^{-1}[f_h]\|_{C^2} \lesssim \|L^{-1}[f_h]\|_{H^{d/2+2+\kappa}} \leq C\|h\|_{H^{d/2+\kappa}}^2$. Using It\^o's isometry,
\begin{align*}
E \left( \int_0^T \nabla L^{-1}[f_h](X_s).dW_s \right)^2 
&= E \int_0^T \|\nabla L^{-1}[f_h](X_s)\|^2 ds\\
&\lesssim T\|L^{-1}[f_h]\|_{C^1} \lesssim T\|h\|_{H^{d/2+\kappa}}^2
\end{align*}
with the same dependence on the constants. Normalizing everything by $1/T$ then gives the result.
\end{proof}

From this lemma, we deduce the limiting covariance in the expansion of the Laplace transform in Lemma \ref{lem:LaplTransAsymp}.

\begin{lemma}\label{Lem:LimCov}
Suppose $B_0 \in C^{(d/2+1+\kappa)\vee 2}(\T^d)$ for some $\kappa>0$ and let $\psi \in \dot{H}^t(\T^d)$ with $t>d/2-1$. Further let $(\gamma_T:T>0) \subset H^{d/2+1+\kappa}(\T^d)$ be a sequence of functions such that $K_\gamma:= \limsup_{T\to\infty} \|\gamma_T\|_{H^{d/2+1+\kappa}} <\infty$ and $\|\nabla A_{\mu_0}^{-1}\psi - \nabla\gamma_T\|_{\mu_0} \to 0$ as $T \to \infty$. Then for $\hat{\Psi}_T$ and $\widetilde{\Psi}_T$ defined in \eqref{Eq:efficient_est} and \eqref{Eq:psi_tilde}, respectively, we have
$\hat{\Psi}_T - \widetilde{\Psi}_T = o_{P_0}(1/\sqrt{T}).$
Furthermore, as $T\to\infty$,
$$\frac{1}{T}\int_0^T \|\nabla\gamma_T(X_t)\|^2 dt \overset{P_0}{\longrightarrow} \|\nabla A_{\mu_0}^{-1} \psi\|_{\mu_0}^2.$$

\end{lemma}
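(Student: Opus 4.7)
The plan is to reduce both claims to the $L^2(P_0)$-concentration bound for time averages provided by Lemma \ref{Lem:square_functionals}, after setting up the quantities to fit its hypotheses.

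For the first assertion, I would substitute the defining expressions \eqref{Eq:efficient_est} and \eqref{Eq:psi_tilde} to get
$$
\sqrt{T}(\hat{\Psi}_T - \widetilde{\Psi}_T) = \frac{1}{\sqrt{T}}\int_0^T h(X_t).dW_t + o_{P_0}(1), \qquad h := \nabla A_{\mu_0}^{-1}\psi - \nabla \gamma_T,
$$
and then control the stochastic integral by It\^o's isometry:
$$
E_0 \left(\frac{1}{\sqrt{T}}\int_0^T h(X_t).dW_t\right)^2 = E_0 \frac{1}{T}\int_0^T \|h(X_t)\|^2 dt.
$$
Applying Lemma \ref{Lem:square_functionals} componentwise to each $h_i = \partial_{x_i}(A_{\mu_0}^{-1}\psi - \gamma_T)$ and summing over $i=1,\dots,d$ yields
$$
E_0 \frac{1}{T}\int_0^T \|h(X_t)\|^2 dt \le \|h\|_{\mu_0}^2 + C\bigl( T^{-1/2} \|h\|_{H^{d/2+\kappa}}^2 + T^{-1}\|h\|_{H^{d/2+\kappa}}^4\bigr),
$$
by the triangle inequality in $L^2(P_0)$. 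Provided $\|h\|_{H^{d/2+\kappa}}$ stays bounded in $T$, the assumption $\|h\|_{\mu_0} \to 0$ gives the right-hand side $\to 0$, so the stochastic integral vanishes in probability and Chebyshev concludes the first claim.

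The one piece of bookkeeping is to verify the Sobolev regularity of $h$. The term $\nabla \gamma_T$ lies in $H^{d/2+\kappa}$ with $O(1)$ norm since $\|\gamma_T\|_{H^{d/2+1+\kappa}} = O(1)$. For $\nabla A_{\mu_0}^{-1}\psi$, the assumption $\psi \in \dot H^t$ with $t>d/2-1$ combined with $B_0 \in C^{(d/2+1+\kappa)\vee 2}$ (so $\mu_0 \propto e^{2B_0}$ is sufficiently smooth) and the elliptic estimate of Lemma \ref{lem:PDE} give $A_{\mu_0}^{-1}\psi \in H^{d/2+1+\kappa'}$ for some $\kappa' \in (0,\kappa]$, hence $\nabla A_{\mu_0}^{-1}\psi \in H^{d/2+\kappa'}$ with a $T$-independent bound. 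Relabelling $\kappa$, this places $h$ within the scope of Lemma \ref{Lem:square_functionals} with a uniform bound.

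For the second assertion, I use the triangle inequality
$$
\left|\frac{1}{T}\int_0^T \|\nabla \gamma_T(X_t)\|^2 dt - \|\nabla A_{\mu_0}^{-1}\psi\|_{\mu_0}^2\right| \le \left|\frac{1}{T}\int_0^T \|\nabla \gamma_T(X_t)\|^2 dt - \|\nabla \gamma_T\|_{\mu_0}^2\right| + \bigl|\|\nabla \gamma_T\|_{\mu_0}^2 - \|\nabla A_{\mu_0}^{-1}\psi\|_{\mu_0}^2\bigr|.
$$
The deterministic term tends to zero because the reverse triangle inequality together with uniform boundedness of $\|\nabla \gamma_T\|_{\mu_0}$ transforms $\|\nabla A_{\mu_0}^{-1}\psi - \nabla \gamma_T\|_{\mu_0} \to 0$ into convergence of squared norms. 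For the stochastic term, I again apply Lemma \ref{Lem:square_functionals} componentwise to $\partial_{x_i}\gamma_T$: since $\|\partial_{x_i}\gamma_T\|_{H^{d/2+\kappa}} \le \|\gamma_T\|_{H^{d/2+1+\kappa}} = O(1)$, the $L^2(P_0)$-deviation is $O(T^{-1/2})$, and Markov's inequality delivers convergence to zero in $P_0$-probability.

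The main (mild) obstacle is checking that the Sobolev hypotheses of Lemma \ref{Lem:square_functionals} are met uniformly in $T$, i.e.~controlling $\|\nabla A_{\mu_0}^{-1}\psi\|_{H^{d/2+\kappa}}$ via elliptic regularity with the prescribed smoothness of $\psi$ and $B_0$; beyond this, the argument is a straightforward application of It\^o's isometry and the already-established square-functional concentration bound.
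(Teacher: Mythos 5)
Your proposal is correct and follows essentially the same route as the paper: It\^o's isometry reduces the first claim to an $L^2(P_0)$ bound on $\frac1T\int_0^T\|\nabla A_{\mu_0}^{-1}\psi(X_t)-\nabla\gamma_T(X_t)\|^2dt$, which is handled by applying Lemma \ref{Lem:square_functionals} componentwise after checking $\|\nabla A_{\mu_0}^{-1}\psi\|_{H^{d/2+\kappa}}\lesssim\|\psi\|_{H^t}$ via Lemma \ref{lem:PDE} with $\kappa$ shrunk so that $\kappa<t-d/2+1$. For the second claim your decomposition (concentrate $\frac1T\int_0^T\|\nabla\gamma_T(X_t)\|^2dt$ around its own mean $\|\nabla\gamma_T\|_{\mu_0}^2$, then pass to the limit deterministically) is marginally more direct than the paper's difference-of-squares/Cauchy--Schwarz comparison with the time average of $\|\nabla A_{\mu_0}^{-1}\psi(X_t)\|^2$, but it rests on the same lemma and the same uniform bound $\|\gamma_T\|_{H^{d/2+1+\kappa}}=O(1)$; the only blemish is the powers in your displayed bound (Jensen gives $T^{-1/2}\|h\|_{H^{d/2+\kappa}}+T^{-1}\|h\|_{H^{d/2+\kappa}}^2$ rather than the squared versions), which is immaterial since these norms are bounded.
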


\begin{proof}
Using the definitions \eqref{Eq:efficient_est} and \eqref{Eq:psi_tilde},
\begin{align}\label{Eq:psi_difference}
\hat{\Psi}_T - \widetilde{\Psi}_T = \frac{1}{T} \int_0^T \left[\nabla A_{\mu_0}^{-1}\psi (X_t) - \nabla\gamma_T(X_t) \right].dW_t + o_{P_0}(T^{-1/2}).
\end{align}
We then have
\begin{align*}
E_0 &\left( \frac{1}{T} \int_0^T \left[\nabla A_{\mu_0}^{-1}\psi (X_t) - \nabla\gamma_T(X_t) \right].dW_t \right)^2\\
&= \frac{1}{T^2} E_0 \int_0^T \sum_{i=1}^d [\partial_{x_i} A_{\mu_0}^{-1}\psi(X_s) - \partial_{x_i} \gamma_T(X_s)]^2 ds.
\end{align*}
We may without loss of generality take $\kappa>0$ small enough that $0<\kappa<t-d/2+1$. For such $\kappa$, Lemma \ref{lem:PDE} implies $\|\nabla A_{\mu_0}^{-1}\psi \|_{H^{d/2+\kappa}} \lesssim \|A_{\mu_0}^{-1} \psi\|_{H^{d/2+1+\kappa}} \leq C\|\psi\|_{H^{d/2-1+\kappa}} \leq C \|\psi\|_{H^t}$, where $C$ depends only on $d,\kappa$ and $\|B_0\|_{C^{|d/2-1+\kappa|+1}}$. We thus have that $\|\nabla A_{\mu_0}^{-1}\psi - \nabla\gamma_T\|_{H^{d/2+\kappa}} \lesssim \|\psi\|_{H^t} + K_\gamma<\infty$ for $T>0$ large enough. Applying Lemma \ref{Lem:square_functionals} with each $h_i(x) = \partial_{x_i} A_{\mu_0}^{-1}\psi(x) - \partial_{x_i} \gamma_T(x)$ then gives
\begin{align}\label{Eq:L2_bound}
E_0 \left( \frac{1}{T} \int_0^T \| \nabla A_{\mu_0}^{-1}\psi (X_t) - \nabla\gamma_T(X_t) \|^2 dt - \|\nabla A_{\mu_0}^{-1}\psi - \nabla\gamma_T\|_{\mu_0}^2 \right)^2  \lesssim \frac{1}{T}
\end{align}
for $T>0$ large enough. Therefore,
\begin{align*}
E_0 \left( \frac{1}{T} \int_0^T \left[\nabla A_{\mu_0}^{-1}\psi (X_t) - \nabla\gamma_T(X_t) \right].dW_t \right)^2 &= \frac{1}{T} \|\nabla A_{\mu_0}^{-1}\psi - \nabla\gamma_T\|_{\mu_0}^2 + O(T^{-3/2}) \\
& = o(1/T)
\end{align*}
by assumption. Since convergence in $L^2(P_0)$ implies convergence in $P_0$-probability, the stochastic integral term in \eqref{Eq:psi_difference} is $o_{P_0}(T^{-1/2})$ as desired.

Turning to the second assertion, arguing as for \eqref{Eq:L2_bound} and using Lemma \ref{Lem:square_functionals},
\begin{align}\label{Eq:L2_bound2}
    E_0 \left( \frac{1}{T}\int_0^T \|\nabla A_{\mu_0}^{-1}\psi(X_t)\|^2 dt - \|\nabla A_{\mu_0}^{-1}\psi\|_{\mu_0}^2 \right)^2 \leq \frac{C}{T}
\end{align}
for $C=C(d,\kappa,\|B_0\|_{C^{d/2+\kappa+1}},\|\psi\|_{H^t})$, so that in particular $\frac{1}{T}\int_0^T \|\nabla A_{\mu_0}^{-1}\psi(X_t)\|^2 dt \overset{P_0}{\longrightarrow} \|\nabla A_{\mu_0}^{-1}\psi\|_{\mu_0}^2$. Expanding out the difference of two squares and using Cauchy-Schwarz,
\begin{align*}
& \left| \frac{1}{T}\int_0^T \|\nabla\gamma_T(X_t)\|^2dt - \frac{1}{T}\int_0^T \|\nabla A_{\mu_0}^{-1} (X_t)\|^2 dt \right| \\
& \quad = \left| \frac{1}{T} \int_0^T \sum_{i=1}^d \left( \partial_{x_i} \gamma_T(X_t) - \partial_{x_i} A_{\mu_0}^{-1}\psi (X_t) \right) \left( \partial_{x_i} \gamma_T (X_t) + \partial_{x_i} A_{\mu_0}^{-1}\psi (X_t) \right) dt \right|  \\
& \quad \leq   \left( \frac{1}{T} \int_0^T \sum_{i=1}^d (\partial_{x_i} \gamma_T(X_t) - \partial_{x_i} A_{\mu_0}^{-1}\psi (X_t))^2 dt \right)^{1/2}\\
&\quad\quad\ \times
\left( \frac{1}{T} \int_0^T \sum_{i=1}^d (\partial_{x_i} \gamma_T(X_t) + \partial_{x_i} A_{\mu_0}^{-1}\psi (X_t))^2 dt \right)^{1/2}.
\end{align*}
The first term equals $\|\nabla A_{\mu_0}^{-1}\psi - \nabla\gamma_T\|_{\mu_0}^2 + O_{P_0}(1/\sqrt{T})$ by \eqref{Eq:L2_bound}. Writing $\nabla \gamma_T = (\nabla \gamma_T - \nabla A_{\mu_0}^{-1} \psi) + \nabla A_{\mu_0}^{-1} \psi$, the square of the second term can be upper bounded by a multiple of
\begin{align*}
\frac{1}{T}\int_0^T &\|\nabla \gamma_T(X_t) - \nabla A_{\mu_0}^{-1} \psi(X_t)\|^2 + \| \nabla A_{\mu_0}^{-1} \psi(X_t)\|^2 dt \\
&= \|\nabla A_{\mu_0}^{-1}\psi - \nabla\gamma_T\|_{\mu_0}^2 
+ \|\nabla A_{\mu_0}^{-1}\psi\|_{\mu_0}^2 + O_{P_0}(1/\sqrt{T})
\end{align*}
using \eqref{Eq:L2_bound} and \eqref{Eq:L2_bound2}. Combining the above, the before last display is bounded by
\begin{align*}
    &\left( \|\nabla A_{\mu_0}^{-1}\psi - \nabla\gamma_T\|_{\mu_0}^2 + O_{P_0}(T^{-1/2}) \right)^{1/2} \\
    &\quad\times\left( \|\nabla A_{\mu_0}^{-1}\psi - \nabla\gamma_T\|_{\mu_0}^2 + \|\nabla A_{\mu_0}^{-1}\psi\|_{\mu_0}^2 + O_{P_0}(T^{-1/2}) \right)^{1/2},
\end{align*}
which is $o_{P_0}(1)$ since $\|\nabla A_{\mu_0}^{-1}\psi - \nabla\gamma_T\|_{\mu_0} \to 0$ by assumption. This shows 
$$\frac{1}{T}\int_0^T \|\nabla \gamma_T(X_t)\|^2 dt \overset{P_0}{\longrightarrow} \|\nabla A_{\mu_0}^{-1}\psi\|_{\mu_0}^2$$ 
as $T\to\infty$.
\end{proof}

%
%
%

\subsection{Proof of Theorem \ref{Theo:GaussBvM}: Gaussian priors}

We verify the conditions of Theorem \ref{thm:BvM} with $p=q=2$. First note that by Condition \ref{Condition:BaseGP}, $\Pi$ is supported on $\dot{C}^2(\T^d)$ by construction. Further, since $s>d$, we have $\H_W\subset \dot{H}^{s+1}(\T^d)\subset \dot{H}^{d+1+\kappa}(\T^d)$ for all sufficiently small $\kappa$, whence $\gamma_T\in \dot{H}^{d+1+\kappa}(\T^d)\subset \dot{H}^{d+1/2+\kappa}(\T^d)$. Since $\gamma_T$ satisfies the conditions \eqref{Eq:gamma_T_conditions} by assumption, it remains to show (i) the posterior concentrates on sets $\Dcal_T$ satisfying \eqref{Eq:D_T_conditions} and (ii) the change of measure condition \eqref{Eq:change_of_measure} in order to apply Theorem \ref{thm:BvM}.

(i) For $\eps_T = T^{-s/(2s+d)}$ and $M>0$, define the sets
\begin{equation}
\label{Eq:LocSetGP}
\begin{split}
    \Dcal_T
    =\Dcal_T(M) := \big\{ B\in \dot C^2(\T^d)
    :\|\nabla B - \nabla B_0\|_2\le M \eps_T,& \
    \|B\|_{H^{d+1+\kappa}} \leq M,\\
    &|\langle B,\gamma_T\rangle_{\H_W}| \le M 
    \|\gamma_T\|_{\H_W}\big\}.
\end{split}
\end{equation}
For $M$ large enough, Lemma \ref{Lem:LocSetGP} below implies that $\Pi(\Dcal_T|X^T)\overset{P_0}{\longrightarrow}1$ as $T\to\infty$. By the Poincaré inequality (e.g.~p.~290 in \cite{evans2010}), for all $B\in \Dcal_T \subset \dot{C}^2(\T^d)$ it holds that
$$
    \|B - B_0\|_{H^1}\simeq \|\nabla B - \nabla B_0\|_2
    \lesssim \eps_T,
$$
whence, recalling that $B_0\in \dot{H}^{s+1}(\T^d)\subset \dot{H}^{d+1+\kappa}(\T^d)$ and that $\|B\|_{H^{d+1+\kappa}} \leq M$ for all $B\in\Dcal_T$,  by the Sobolev interpolation inequality (e.g.~Theorems 1.3.3 and 4.3.1 in \cite{Triebel78}), 
$$
    \|B - B_0\|_{H^{d+1/2+\kappa}} \lesssim 
    \|B - B_0\|_{H^1}^\frac{d}{2d+2\kappa} 
    \|B - B_0\|_{H^{d+1+\kappa}}^\frac{d/2+\kappa}{d+\kappa}
    \lesssim \eps_T^\frac{d}{2d+2\kappa} =o(1).
$$
Lastly, since the remainder $r(B,B_0)$ of the functional $\Psi$ satisfies \eqref{Eq:remainder_text} by assumption, we have by Remark \ref{Rem:remainder} that for all $B\in \Dcal_T$,
$$
    \sup_{B\in \Dcal_T} |r(B,B_0)| =O(\eps^2_T)
    =o(1/\sqrt T)
$$
since $s>d$. We conclude that, for all sufficiently large $M>0$, the set $\Dcal_T$ satisfies the condition \eqref{Eq:D_T_conditions} of Theorem \ref{thm:BvM} with the choices $\zeta_T = \eps_T^{d/(2d+2\kappa)}$ and $\xi_T = \sqrt{T}\eps^2_T$.

(ii) It remains to verify the asymptotic invariance property \eqref{Eq:change_of_measure}. For $B_u = B - u\gamma_T/\sqrt T$ and $\Pi_u:=\Lcal(B_u )$, using the Cameron-Martin theorem (e.g.~Theorem 2.6.13 in \cite{ginenickl2016}), 
\begin{align*}
    \frac{\int_{\Dcal_T} e^{\ell_T(B_u )} d\Pi (B) }
    {\int_{\Dcal_T}e^{\ell_T(B)}d\Pi (B)}
    &=\frac{\int_{\Dcal_{T,u}} e^{\ell_T(B')}
    \frac{d\Pi_u}{d\Pi }(B')d\Pi (B') }
    {\int_{\Dcal_T}e^{\ell_T(B)}d\Pi (B)}\\
    &= \frac{\int_{\Dcal_{T,u}} e^{\ell_T(B')}
    e^{-\frac{u}{\sqrt T}\langle \gamma_T, B'\rangle_{\H }
    -\frac{u^2}{2T}\|\gamma_T\|^2_{\H }}d\Pi (B') }
    {\int_{\Dcal_T}e^{\ell_T(B)}d\Pi (B)},
\end{align*}
where $\Dcal_{T,u}:=\{B'=B_u: \ B\in\Dcal_T\}$. Using that $\|\cdot\|_{\H}^2 = T\eps_T^2 \|\cdot\|_{\H_W}^2$ and the definition of $\Dcal_T$,
\begin{align*}
    &\sup_{B'\in\Dcal_{T,u}}
    \Big| \frac{u}{\sqrt T}\langle \gamma_T, B'\rangle_{\H }
    +\frac{u^2}{2T}\|\gamma_T\|^2_{\H } \Big|\\
    &\ \le |u|\sqrt T\eps_T^2
    \sup_{B\in\Dcal_{T}}|\langle \gamma_T,B\rangle_{\H_W}|
    +u^2\eps_T^2
    \|\gamma_T\|^2_{\H_W}
    +\frac{u^2}{2}\eps_T^2
    \|\gamma_T\|^2_{\H_W}\\
    &\leq M|u|\sqrt T\eps_T^2\|\gamma_T\|_{\H_W}
    + \tfrac{3}{2} u^2 \eps_T^2 \|\gamma_T\|_{\H_W}^2
    =o(|u|+u^2),
\end{align*}
since by assumption 
$\|\gamma_T\|_{\H_W}=o(1/(\sqrt T\eps_T^2))$ (which also implies $\|\gamma_T\|_{\H_W}=o(1/\eps_T))$. Hence for all $|u|<1$,
\begin{align*}
    \frac{\int_{\Dcal_T} e^{\ell_T(B_u )} d\Pi (B) }
    {\int_{\Dcal_T}e^{\ell_T(B)}d\Pi (B)}
    &=e^{o(1)}
    \frac{\int_{\Dcal_{T,u}} e^{\ell_T(B')}d\Pi (B') }
    {\int_{\Dcal_T}e^{\ell_T(B)}d\Pi (B)} 
    =e^{o(1)}\frac{\Pi  (\Dcal_{T,u}|X^T)}{\Pi(\Dcal_T|X^T)}.
\end{align*}
As already observed at the beginning of the proof, the denominator in the right hand side satisfies $\Pi(\Dcal_T|X^T)\overset{P_0}{\longrightarrow} 1$. Moreover,
\begin{align*}
    \Dcal_{T,u}^c
    &= \left\{B: \|\nabla (B+\tfrac{u}{\sqrt{T}}\gamma_T) - \nabla B_0\|_2 > M \eps_T \right\} \cup \left\{ B: \|B+\tfrac{u}{\sqrt{T}}\gamma_T\|_{H^{d+1+\kappa}} > M \right\} \\
    & \qquad \qquad \cup \left\{ B: |\langle B+\tfrac{u}{\sqrt{T}}\gamma_T,\gamma_T \rangle_{\H_W}| > M \|\gamma_T\|_{\H_W} \right\}.
\end{align*}
Since $\|\gamma_T\|_2 \lesssim \|\gamma_T\|_{H^{d/2+1+\kappa}} = O(1)$ by assumption, the first set is contained in
$\{B: \|\nabla B - \nabla B_0\|_2 > M \eps_T - C/\sqrt{T} \}$.
The second set is similarly contained in $\{ B: \|B\|_{H^{d+1+\kappa}} > M-o(1) \}$ using $\|\gamma_T\|_{H^{d+1+\kappa}} = o(\sqrt{T})$, while the third is contained in $\{ B: |\langle B,\gamma_T \rangle_{\H_W}| > (M-o(1)) \|\gamma_T\|_{\H_W} \}$ since $\|\gamma_T\|_{\H_W} = o(1/(T\eps_T^2)) = o(1)$. Since $T^{-1/2} = o(\eps_T)$, we conclude that $\Dcal_{T,u}(M)^c \subset \Dcal_T(M/2)^c$ for $T>0$ large enough. For sufficiently large $M>0$, we thus have $\Pi(\Dcal_{T,u}(M)|X^T) \geq \Pi(\Dcal_T(M/2)|X^T) \overset{P_0}{\longrightarrow}1$ by Lemma \ref{Lem:LocSetGP}, which completes the proof of \eqref{Eq:change_of_measure}.
\qed

\begin{lemma}\label{Lem:LocSetGP}
Let $\Pi$ and $B_0$ be as in the statement of Theorem \ref{Theo:GaussBvM}, and let $\Dcal_T$ be the set defined in \eqref{Eq:LocSetGP}. Then for all sufficiently large $M>0$, as $T\to\infty$,
$$
    \Pi(\Dcal_T|X^T)\overset{P_0}{\longrightarrow}1.
$$
\end{lemma}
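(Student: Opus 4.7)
The plan is to decompose $\Dcal_T^c = A_1 \cup A_2 \cup A_3$ according to which of the three defining conditions of $\Dcal_T$ in \eqref{Eq:LocSetGP} fails, and to bound each piece separately. The first event $A_1 = \{\|\nabla B - \nabla B_0\|_2 > M\eps_T\}$ has vanishing posterior mass by direct appeal to Theorem~2.1 of \cite{giordano2022nonparametric}, which establishes exactly the contraction rate $\eps_T$ in the gradient $L^2$-norm for the rescaled Gaussian prior considered here.

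For the other two events I would employ the standard Gaussian-prior template: an exponential upper bound for the prior probability combined with an exponential lower bound for the posterior normalising constant. The lower bound is already furnished inside the proof of Theorem~2.1 of \cite{giordano2022nonparametric}: there exist $C_1>0$ and an event $\Omega_T$ with $P_0(\Omega_T)\to 1$ on which $\int e^{\ell_T(B) - \ell_T(B_0)}d\Pi(B) \geq e^{-C_1 T\eps_T^2}$. Combined with the Fubini identity $E_0\int_A e^{\ell_T(B)-\ell_T(B_0)}d\Pi(B) = \Pi(A)$ and Markov's inequality, this reduces the problem to showing $\Pi(A_i) \leq e^{-(C_1+2)T\eps_T^2}$ for $i=2,3$ and $M$ sufficiently large.

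For $A_2 = \{\|B\|_{H^{d+1+\kappa}} > M\}$, set $c := T^{d/(4s+2d)}$ so that $B = W/c$ with $W\sim \Pi_W$, and note $c^2 = T^{d/(2s+d)} = T\eps_T^2$; hence $\Pi(A_2) = \Pi_W(\|W\|_{H^{d+1+\kappa}} > Mc)$. By Condition~\ref{Condition:BaseGP}, $\Pi_W$ is a centred Gaussian Borel measure supported in $H^{d+1+\kappa}(\T^d)$, so $\mu := E_{\Pi_W}\|W\|_{H^{d+1+\kappa}}<\infty$ by Fernique, and the weak variance $\sigma^2 := \sup_{\|h\|_{\H_W}\leq 1}\|h\|_{H^{d+1+\kappa}}^2$ is finite via $\H_W\hookrightarrow H^{s+1}\hookrightarrow H^{d+1+\kappa}$ (the latter embedding holds since $s>d$ and $\kappa$ can be taken smaller than $s-d$). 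Borell's inequality then yields, for $T$ large,
$$\Pi(A_2) \leq \exp\!\left(-(Mc-\mu)^2/(2\sigma^2)\right) \leq \exp\!\left(-M^2 T\eps_T^2/(8\sigma^2)\right).$$
For $A_3 = \{|\langle B,\gamma_T\rangle_{\H_W}| > M\|\gamma_T\|_{\H_W}\}$, I would interpret the RKHS inner product via the Paley-Wiener/isonormal extension: under $\Pi_W$ the functional $W\mapsto \langle W,\gamma_T\rangle_{\H_W}$ is centred Gaussian with variance $\|\gamma_T\|_{\H_W}^2$, so under $\Pi$ the functional $B\mapsto \langle B,\gamma_T\rangle_{\H_W} = \langle W,\gamma_T\rangle_{\H_W}/c$ is centred Gaussian with variance $\|\gamma_T\|_{\H_W}^2/c^2 = \|\gamma_T\|_{\H_W}^2/(T\eps_T^2)$, whence $\Pi(A_3) \leq 2e^{-M^2 T\eps_T^2/2}$. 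Choosing $M$ large enough that both tail exponents exceed $C_1+2$ then completes the argument.

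The main subtlety to get right is the variance identification for $\langle B,\gamma_T\rangle_{\H_W}$ when $B\notin \H_W$ almost surely (as for the Matérn prior of Example \ref{Ex:Matern}): the inner product must be defined via the Paley-Wiener construction transported from $(W,\Pi_W)$ through $B = W/c$, and the rescaling deflates its variance by precisely the factor $c^2 = T\eps_T^2$. This is the very factor that appears in the denominator bound, so the two scalings match and the Markov argument closes — which clarifies the role of the specific rescaling exponent $d/(4s+2d)$ in \eqref{Eq:RescaledGP} for this lemma.
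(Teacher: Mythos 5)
Your proposal is correct and follows essentially the same route as the paper: split $\Dcal_T^c$ into the three defining events, dispose of the gradient-contraction event via Theorem~2.1 of \cite{giordano2022nonparametric}, and handle the Sobolev-ball and RKHS-functional events by exponential prior tail bounds (Borell--Fernique for the former, the Paley--Wiener Gaussianity of $\langle B,\gamma_T\rangle_{\H_W}$ with the rescaling-deflated variance $\|\gamma_T\|_{\H_W}^2/(T\eps_T^2)$ for the latter) combined with the evidence lower bound. The only cosmetic difference is that you unpack the ``analogue of Theorem~8.20 of \cite{ghosal2017}'' into the explicit Fubini--Markov argument and the prior bound of Lemma~5.2 of \cite{giordano2022nonparametric} into the explicit Borell inequality, both of which the paper invokes as black boxes.
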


\begin{proof}
Write $\Dcal_T = \Dcal_{T,1}\cap \Dcal_{T,2}\cap \Dcal_{T,3}$ with $\Dcal_{T,1}:=\{ B\in \dot C^2(\T^d):\|\nabla B - \nabla B_0\|_2\le M \eps_T\}$, $\Dcal_{T,2}:=\{ B\in \dot C^2(\T^d):\|B\|_{H^{d+1+\kappa}} \leq M\}$, and $\Dcal_{T,3}:=\{ B\in \dot C^2(\T^d):|\langle B,\gamma_T\rangle_{\H_W}| \le M \|\gamma_T\|_{\H_W}\big\}$. We show that each set has posterior probability tending to one in $P_{0}$-probability as $T\to\infty$.

For $M>0$ large enough, this holds for $\Dcal_{T,1}$ by Theorem 2.1 of \cite{giordano2022nonparametric}, whose assumptions are satisfied under the conditions of Theorem \ref{Theo:GaussBvM}. Further, since $\Pi_W$ is supported on $\dot{H}^{d+1+\kappa}(\T^d)\cap \dot{C}^{(d/2+\kappa)\vee 2}(\T^d)$ by assumption, arguing exactly as in the proof of Lemma 5.2 of \cite{giordano2022nonparametric}, it follows that for all $K>0$ there exists sufficiently large $M>0$ such that
$$
    \Pi(\Dcal_{T,2}^c)\le e^{-K T\eps_T^2}.
$$
By an analogue of the Theorem 8.20 in \cite{ghosal2017} for the present setting, whose validity is implied by the proof of Theorem 2.1 of \cite{giordano2022nonparametric}, we then have
$\Pi(\Dcal_{T,2}|X^T)\overset{P_0}{\longrightarrow}1$ as $T\to\infty$.

Turning to $\Dcal_{T,3}$, note that if $B\sim\Pi$, then
$\langle B,\gamma_T\rangle_{\H}
=T\eps_T^2\langle B,\gamma_T\rangle_{\H_W}
\sim N(0,\|\gamma_T\|^2_{\H})=N(0,T\eps_T^2\|\gamma_T\|^2_{\H_W})$, and therefore by the standard tail inequality for normal random variables,
\begin{align*}
    \Pi\left( \Dcal^c_{T,3}\right)
    &=
    \Pi\left( B : \frac{|\langle B,\gamma_T\rangle_{\H_W}|}
    {\|\gamma_T\|_{\H_W}}>M
    \right)\\
    &=\Pi\left( B : \frac{|\langle B,\gamma_T\rangle_{\H}|}
    {\|\gamma_T\|_{\H}}>M\sqrt T \eps_T
    \right)
    \le e^{-M^2 T\eps_T^2}.
\end{align*}
This implies that 
$\Pi(\Dcal_{T,3}|X^T)\overset{P_0}{\longrightarrow}1$ as $T\to\infty$, again by an analogue of Theorem 8.20 in \cite{ghosal2017}.
\end{proof}

%
%
%

\subsection{Proof of Theorem \ref{Theo:LaplBvM}: Besov-Laplace priors}

We verify the conditions of Theorem \ref{thm:BvM} with $p=1$ and $q=\infty$. By construction, the support of $\Pi_W$, and hence also of $\Pi$, is equal to the wavelet approximation space
$$V_J=\textnormal{span}(\Phi_{lr}, \ l=0,1,\dots,J,\ r=0,\dots,(2^{ld-1}-1)\vee0)\subset \dot{C}^2(\T^d).$$
Since  $B_0\in \dot{H}^{s+1}(\T^d)$ and $s>d+(d/2)\vee 2$, it holds that $B_0\in \dot C^{(d/2+1+\kappa)\vee 2}(\T^d)$ by the Sobolev embedding. Let $B_{0,T}:=\sum_{l=0}^J\sum_r \langle B_0,\Phi_{lr}\rangle_2\Phi_{lr}$ be the projection of $B_0$ onto $V_J$, and for $\eps_T := T^{-s/(2s+d)}$ and $M>0$, define the sets
\begin{equation}
\label{Eq:LocSetLapl}
\begin{split}
    \Dcal_T
    =\Dcal_T(M) := \big\{ B\in V_J
    :\|\nabla B - \nabla B_{0,T}\|_1\le M \eps_T,& \
    \|B\|_{H^{d+1+\kappa}} \le M
   \big\},
\end{split}
\end{equation}
where $\kappa>0$ is an arbitrarily small constant. For $M$ large enough, Lemma \ref{Lem:LocSetLapl} below implies that $\Pi(\Dcal_T|X^T)\overset{P_0}{\longrightarrow}1$ as $T\to\infty$.

Using \eqref{Eq:B0_approx}, $\|\nabla B - \nabla B_0\|_1\lesssim \eps_T$ for all $B\in \Dcal_T$, which verifies the first requirement in \eqref{Eq:D_T_conditions}. Next, in view of the continuous embedding $W^{1,1}(\T^d)\subset B^1_{1\infty}(\T^d)$ (e.g.~Proposition 4.3.20 in \cite{ginenickl2016}) and the Poincaré inequality, for all $B\in \Dcal_T$,
$$
    \|B - B_{0,T}\|_{B^1_{1\infty}}\lesssim \| B - B_{0,T}\|_{W^{1,1}}\lesssim \|\nabla B - \nabla B_{0,T}\|_1
    \lesssim \eps_T,
$$
and hence, using that $B,B_{0,T}\in V_J$, for any $\kappa>0$,
$$
    \|B - B_{0,T}\|_{B^{d/2+\kappa}_{1\infty}}
    \lesssim 2^{J(d/2+\kappa-1)_+}
    \|B - B_{0,T}\|_{B^1_{1\infty}}
    \lesssim T^{-\frac{s-(d/2+\kappa-1)_+}{2s+d}}.
$$
Using the continuous embedding $B^{d/2+\kappa}_{1\infty}(\T^d)\subseteq L^2(\T^d)$ (e.g.~Proposition 4.3.9 in \cite{ginenickl2016}) and that $B_0\in \dot H^{s+1}$,
$$
    \|B-B_0\|_2 \lesssim \|B - B_{0,T}\|_2 + \|B_{0,T}-B_0\|_2 \lesssim T^{-\frac{s-(d/2+\kappa-1)_+}{2s+d}} + \eps_T.
$$
For $B \in \Dcal_T$, by the Sobolev interpolation inequality (e.g.~Theorems 1.3.3 and 4.3.1 in \cite{Triebel78}), 
\begin{align*}
    \|B - B_0\|_{H^{d+1/2+\kappa}} 
    &\lesssim 
    \|B - B_0\|_2^\frac{d/2}{1+d+\kappa}
    \|B - B_0\|_{H^{d+1+\kappa}}^\frac{1+d/2+\kappa}{1+d+\kappa}\\
    &\lesssim T^{-\frac{[s-(d/2+\kappa-1)_+]d/2}{(2s+d)(1+d+\kappa)}} =o(1),
\end{align*}
since $s>d+(d/2)\vee2$, which verifies the second requirement in \eqref{Eq:D_T_conditions}. Lastly, using the functional remainder assumption \eqref{Eq:remainder_text} and the second last display,
$$
    r(B,B_0)\lesssim T^{-\frac{2(s-(d/2+\kappa-1)_+)}{2s+d}}
    =o(1/\sqrt T),
$$
since $s>d+(d/2)\vee 2 >d/2+2(d/2+\kappa-1)\vee0.$
We conclude that for all sufficiently large $M>0$, the set $\Dcal_T$ satisfies the condition of Theorem \ref{thm:BvM} (with choices $\zeta_T = T^{-\frac{[s-(d/2+\kappa-1)_+]d/2}{(2s+d)(1+d+\kappa)}}$ and $\xi_T = T^{-\frac{s-d/2-2(d/2+\kappa-1)_+}{2s+d}}$ in \eqref{Eq:D_T_conditions}).

Now consider the representor $\psi \in \dot{C}^t(\T^d)$, where $t>(d/2-1)_+$. Using that $B_0\in \dot H^{s+1}(\T^d)$ with $s>d+(d/2)\vee 2$, and $\psi\in \dot C^t(\T^d)\subset \dot B^{(d/2-1+\kappa)_+}_{\infty1}(\T^d)$ (e.g.~p.~335 in \cite{ginenickl2016}) for sufficiently small $\kappa>0$, by Lemma \ref{lem:PDE} there exists a unique element $A^{-1}_{\mu_0}\psi \in  \dot{B}_{\infty 1}^{(d/2+1+\kappa)\vee 2}(\T^d)$ such that $A_{\mu_0}A^{-1}_{\mu_0}\psi = \psi$ almost everywhere and $
\|A^{-1}_{\mu_0}\psi\|_{B_{\infty 1}^{(d/2+1+\kappa)\vee 2}}\lesssim  \|\psi\|_{B_{\infty 1}^{(d/2-1+\kappa)_+}}$.
Take the wavelet projections onto $V_J$:
$$
    \gamma_T:=\sum_{l=0}^ J\sum_r
    \langle A^{-1}_{\mu_0}\psi,\Phi_{lr}\rangle_2\Phi_{lr},
$$
which satisfy $\gamma_T\in \dot H^{t'}(T^d)$ for all $t'\ge 0$. These verify \eqref{Eq:gamma_T_conditions} since by continuous embedding $B^{t'}_{\infty 1}(\T^d)\subset C^{t'}(\T^d)$ (e.g.~p.~347 in \cite{ginenickl2016}),
$\|\gamma_T\|_{H^{d/2+1+\kappa}} \lesssim  \|\gamma_T\|_{B^{d/2+1+\kappa}_{\infty 1}} <\infty,$ 
and for $\kappa>0$ small enough,
\begin{align*}
    \| A_{\mu_0}^{-1}\psi - \gamma_T\|_{W^{1,\infty}}
    \lesssim \| A_{\mu_0}^{-1}\psi - \gamma_T\|_{B^{1}_{\infty 1}}
    &\lesssim 2^{-J(d/2+\kappa)}\|A_{\mu_0}^{-1}\psi\|_{B_{\infty 1}^{d/2+1+\kappa}}
    = o(1/(\sqrt T\eps_T)).
\end{align*}

It remains to show the asymptotic invariance property \eqref{Eq:change_of_measure}. Following the notation and terminology set out in \cite{agapiou2021rates}, the space of `admissible shifts' associated to the re-scaled (truncated) Besov-Laplace prior $\Pi$ is $\Qcal=V_J \cap \dot{L}^2(\T^d)$. Since $\gamma_T\in V_J$, it follows from Proposition 2.7 in \cite{agapiou2021rates} that $\Pi_{u}:=\Lcal(B_{u})$, with $B_u = B - u\gamma_T\sqrt T$, is absolutely continuous with respect to $\Pi$ with Radon-Nikodym derivative
$$
    \frac{d\Pi_u}{d\Pi}(B) 
    =\exp \left\{  T^{\frac{d}{2s+d}} \left(\|B\|_{B^{s+1}_{11}} 
    - \|B+u\gamma_T/\sqrt T\|_{B^{s+1}_{11}} \right) \right\}.
$$
Therefore, it holds that
\begin{align*}
    \frac{\int_{\Dcal_T} e^{\ell_T(B_{u})} d\Pi(B) }
    {\int_{\Dcal_T}e^{\ell_T(B)}d\Pi(B)}
    &=  \frac{\int_{\Dcal_{T,u}} e^{\ell_T(B)}
    e^{T^{d/(2s+d)}(\|B\|_{B^{s+1}_{11}} 
    - \|B+u\gamma_T/\sqrt T\|_{B^{s+1}_{11}})}
    d\Pi(B) }
    {\int_{\Dcal_T}e^{\ell_T(B)}d\Pi (B)},
\end{align*}
where $D_{T,u}:=\{B'=B_u: \ B\in\Dcal_T\}$. Using the reverse triangle inequality and that $B_{\infty 1}^{s+1} \subset B_{11}^{s+1}$, 
\begin{align*}
    \sup_{B\in\Dcal_{T,u}}&
    T^{d/(2s+d)}\Big|\|B\|_{B^{s+1}_{11}} 
    - \|B+u\gamma_T/\sqrt T\|_{B^{s+1}_{11}}\Big|\\
    &\le T^{d/(2s+d)}
    \|u\gamma_T/\sqrt T\|_{B^{s+1}_{1 1}}\\
    &\lesssim T^{-\frac{s-d/2}{2s+d}}\|\gamma_T\|_{B_{\infty 1}^{s+1}}\\
    &\lesssim T^{-\frac{s-d/2}{2s+d}}2^{J(s+1-d/2-1-\kappa)}\|\gamma_T\|_{B_{\infty 1}^{d/2+1+\kappa}}
    \lesssim T^{-\frac{\kappa}{2s+d}}
    =o(1).
\end{align*}
This implies that as $T\to\infty$, for all $u\in\R$,
\begin{align*}
    \frac{\int_{\Dcal_T} e^{\ell_T(B_{u})} d\Pi(B) }
    {\int_{\Dcal_T}e^{\ell_T(B)}d\Pi(B)}
    &=e^{o(1)}\frac{\int_{\Dcal_{T,u}} e^{\ell_T(B)}
    d\Pi(B) }
    {\int_{\Dcal_T}e^{\ell_T(B)}d\Pi (B)} 
    =e^{o(1)}\frac{\Pi(\Dcal_{T,u}|X^T)}{\Pi(\Dcal_T|X^T)}.
\end{align*}
The denominator satisfies $\Pi(\Dcal_T|X^T)\to 1$ in $P_{0}$-probability by Lemma \ref{Lem:LocSetLapl}. Moreover,
\begin{align*}
    \Dcal_{T,u}^c
    & =\Big\{ B: \Big\|\nabla(B+\tfrac{u}{\sqrt T}\gamma_T)-\nabla B_{0,T}\Big\|_2>M \eps_T \Big\}  \cup \Big\{ B: \|B+\tfrac{u}{\sqrt T}\gamma_T\|_{H^{d+1+\kappa}}> M \Big\}.
\end{align*}
Since $t>(d-1)_+$,
$$
    \|\nabla\gamma_T/\sqrt T\|_2\lesssim \|A^{-1}_{\mu_0}\psi\|_{H^1} /\sqrt T \lesssim \|\psi\|_{L^2}/\sqrt T= o(\eps_T),
$$
while using $\gamma_T \in V_J$ gives
$$
    \|\gamma_T/\sqrt T\|_{H^{d+1+\kappa}}
    \lesssim 2^{Jd/2} T^{-1/2} \|\gamma_T\|_{H^{d/2+1+\kappa}}
    \lesssim \eps_T =o(1).
$$ 
Thus $\Dcal_{T,u}(M) \supset \Dcal_T(M/2)$ for $T$ large enough. But for $M>0$ enough, the $P_0$-probability of this last set tends to 1 by Lemma \ref{Lem:LocSetLapl}, which completes the proof.
\qed

\begin{lemma}\label{Lem:LocSetLapl}
Let $\Pi$ and $B_0$ be as in the statement of Theorem \ref{Theo:LaplBvM}, and let $\Dcal_T$ be the set defined in \eqref{Eq:LocSetLapl}. Then for all sufficiently large $M>0$, as $T\to\infty$,
$$
    \Pi(\Dcal_T|X^T)\overset{P_0}{\longrightarrow}1.
$$
\end{lemma}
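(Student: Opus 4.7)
The plan is to mirror the structure of the proof of Lemma \ref{Lem:LocSetGP}, splitting the localizing set as $\Dcal_T = \Dcal_{T,1}\cap \Dcal_{T,2}$, where $\Dcal_{T,1}:=\{B\in V_J : \|\nabla B - \nabla B_{0,T}\|_1 \le M\eps_T\}$ and $\Dcal_{T,2}:=\{B\in V_J : \|B\|_{H^{d+1+\kappa}}\le M\}$, and showing separately that each has posterior probability tending to one in $P_0$-probability. Note that, in contrast to the Gaussian argument, no ``RKHS-alignment'' set $\Dcal_{T,3}$ is required, because the change-of-measure step employed later in the proof of Theorem \ref{Theo:LaplBvM} proceeds through the $B^{s+1}_{11}$-norm rather than an inner product.

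For $\Dcal_{T,1}$, I would invoke Theorem 2.4 of \cite{giordano2022nonparametric}, which in the present Besov-Laplace setting yields $\Pi(\|\nabla B - \nabla B_0\|_1 > M'\eps_T \mid X^T)\overset{P_0}{\longrightarrow}0$ for all sufficiently large $M'$, and combine it with the standard wavelet approximation bound $\|\nabla B_0 - \nabla B_{0,T}\|_1 \lesssim 2^{-Js}\|B_0\|_{H^{s+1}} \lesssim \eps_T$, valid because $B_0\in \dot H^{s+1}$ with $s>d$. The triangle inequality then gives $\Pi(\Dcal_{T,1}^c \mid X^T)\overset{P_0}{\longrightarrow}0$.

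For $\Dcal_{T,2}$, the strategy is to establish the prior tail bound $\Pi(\Dcal_{T,2}^c)\le e^{-KT\eps_T^2}$ for any prescribed $K>0$ once $M$ is chosen large enough, and then invoke the analogue of Theorem 8.20 in \cite{ghosal2017} valid in the present continuous-observation setting (as used in the proof of Theorem 2.4 of \cite{giordano2022nonparametric}) to transfer this prior bound into posterior concentration. For a prior draw $B= T^{-d/(2s+d)}\sum_{l=0}^J\sum_r 2^{-l(s+1-d/2)}g_{lr}\Phi_{lr}$, the wavelet characterisation yields
\begin{equation*}
    \|B\|_{H^{d+1+\kappa}}^2 \simeq T^{-2d/(2s+d)}\sum_{l=0}^J 2^{2l(3d/2+\kappa-s)}\sum_r g_{lr}^2,
\end{equation*}
a weighted sum of i.i.d.~squared Laplace variables. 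Under the hypothesis $s>d+(d/2)\vee 2$, a level-by-level computation shows that for small enough $\kappa>0$ one has $E_\Pi\|B\|_{H^{d+1+\kappa}}^2 = o(1)$, and exponential concentration then follows from a Bernstein-type inequality for sums of squared Laplace (hence sub-exponential) random variables.

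I expect the main obstacle to be this last step: upgrading the favourable in-expectation bound into an exponential tail estimate of the sharp order $e^{-KT\eps_T^2}$ with arbitrary $K$. Unlike in the Gaussian case where Borell's isoperimetric inequality applies directly, here the argument must carefully exploit the wavelet scale separation built into the rescaling in \eqref{Eq:RescaledBesovPrior}, pairing it with sub-exponential Bernstein concentration, in order to convert the in-expectation control into the required large-deviation statement.
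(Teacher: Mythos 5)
Your decomposition $\Dcal_T=\Dcal_{T,1}\cap\Dcal_{T,2}$ and your treatment of $\Dcal_{T,1}$ (wavelet approximation bound $\|\nabla B_0-\nabla B_{0,T}\|_1\lesssim 2^{-Js}\|B_0\|_{H^{s+1}}\lesssim\eps_T$ plus Theorem 2.4 of \cite{giordano2022nonparametric} with $p=1$) coincide exactly with the paper's argument, and that part is complete. The gap is in $\Dcal_{T,2}$, and it is exactly where you flagged it: you never actually establish the prior tail bound $\Pi(\Dcal_{T,2}^c)\le e^{-KT\eps_T^2}$, and the tool you propose for doing so does not apply as stated. A squared Laplace variable $g^2$ has tail $P(g^2>t)=P(|g|>\sqrt t)\asymp e^{-c\sqrt t}$, which decays strictly slower than any $e^{-ct}$; so $g^2$ is \emph{not} sub-exponential, and the Bernstein inequality for sums of sub-exponential variables cannot be invoked for $\sum_r g_{lr}^2$. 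The in-expectation bound $E_\Pi\|B\|_{H^{d+1+\kappa}}^2=o(1)$ (which you compute correctly) is of course far too weak on its own: the posterior-transfer step via the analogue of Theorem 8.20 of \cite{ghosal2017} requires a large-deviation bound at the precise scale $T\eps_T^2\simeq 2^{Jd}$, comparable to the total number of coefficients, with an arbitrarily large constant $K$ in the exponent.

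The paper closes this gap differently, and the difference is structural rather than cosmetic. It first shows the inclusion $\Dcal_{T,2}\supseteq\Dcal_{T,2}'=\{B=B_1+B_2:\ \|B_1\|_\infty\le \tfrac{M}{2}T^{-(s+1)/(2s+d)},\ \|B_2\|_{B^{s+1}_{11}}\le\tfrac{M}{2}\}$, using $\|B_1\|_{H^{d+1+\kappa}}\lesssim 2^{J(d+1+\kappa)}\|B_1\|_\infty=o(1)$ and the embedding $B^{s+1}_{11}\subset H^{d+1+\kappa}$, and then cites Lemma 5.2 of \cite{giordano2022nonparametric} for $\Pi(\Dcal_{T,2}'^c)\le e^{-KT\eps_T^2}$. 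The two-part decomposition $B=B_1+B_2$ is not incidental: concentration for product Laplace (more generally $p$-exponential) measures naturally takes this two-level form — a small-norm part plus an $\ell_1$-type enlargement — which is the correct substitute for Borell's inequality in the non-Gaussian setting (cf.\ \cite{agapiou2021rates}). If you want to avoid the citation and argue directly, you would need either that two-level concentration inequality or a truncation argument tailored to sub-Weibull$(1/2)$ summands; a naive level-by-level Bernstein bound will not produce the exponent $e^{-KT\eps_T^2}$.
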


\begin{proof}
Write $\Dcal_T:= \Dcal_{T,1}\cap \Dcal_{T,2}$, with $\Dcal_{T,1}:=\{B\in V_J :\|\nabla B - \nabla B_{0,T}\|_1\le M \eps_T\}$ and $\Dcal_{T,2}:=\{B\in V_J
:\|B\|_{H^{d+1+\kappa}} \le M \}$. We show that both sets have posterior probability tending to one in $P_{0}$-probability as $T\to\infty$.

Starting with $\Dcal_{T,1}$, note that since $B_0\in H^{s+1}(\T^d)$,
\begin{align}\label{Eq:B0_approx}
    \|\nabla B_0 - \nabla B_{0,T}\|_1
    \lesssim \| B_0 - B_{0,T}\|_{H^1}
    \le 2^{-Js}\|B_0\|_{H^{s+1}}
    \lesssim \eps_T.
\end{align}
Hence, provided that $M$ is sufficiently large,
$$
    \Dcal_{T,1}\supseteq\{ B\in \dot C^2(\T^d):
    \|\nabla B - \nabla B_{0}\|_1\le M\eps_T/2\}
$$
By Theorem 2.4 of \cite{giordano2022nonparametric} (with the choice $p=1$), whose assumptions are precisely recovered under the conditions of Theorem \ref{Theo:LaplBvM}, it then follows that
\begin{equation}
\label{Eq:MedStep}
    \Pi(\Dcal_{T,1}^c|X^T)\le \Pi( B\in \dot C^2(\T^d):
    \|\nabla B - \nabla B_{0}\|_1 > M\eps_T/2|X^T) 
    \overset{P_{0}}{\longrightarrow}0. 
\end{equation}
Further, for sufficiently large $M>0$,
$$
    \Dcal_{T,2}\supseteq
    \Dcal_{T,2}':= \left\{ B = B_1 + B_2: B_1,B_2\in V_J,
    \|B_1\|_\infty\le \tfrac{M}{2}T^{-\frac{s+1}{2s+d}}, 
    \| B_2\|_{B^{s+1}_{11}}\le \tfrac{M}{2} \right\}.
$$
Indeed, since $s>d+(d/2)_+$, for $B_1,B_2\in V_J$ as above,
$$
    \|B_1\|_{H^{d+1+\kappa}}
    \lesssim 2^{J(d+1+\kappa)}T^{-\frac{s+1}{2s+d}}
    \simeq T^{-\frac{s-d-\kappa}{2s+d}} = o(1),
$$
and 
$$
    \|B_2\|_{H^{d+1+\kappa}}
    \lesssim 
    \|B_2\|_{B^{s+1}_{11}}
    \lesssim M,
$$
holding in view of the embedding $B^{s+1}_{11}(\T^d)\subset H^{d+1+\kappa}(\T^d)$ (cf.~eq.~(69) in \cite{LSS09}). By Lemma 5.2 of \cite{giordano2022nonparametric} (with the choice $p=1$), we then have that for all $K>0$, we may choose $M>0$ large enough such that
$$
    \Pi(\Dcal_{T,2}^c)\le 
    \Pi(\Dcal_{T,2}'^c) \le
    e^{-KT\eps_T^2}.
$$
Similarly to the conclusion of the proof of Lemma \ref{Lem:LocSetGP}, we then obtain via an analogue of Theorem 8.20 in \cite{ghosal2017} for the present setting that $\Pi(\Dcal_{T,2}|X^T)\overset{P_0}{\longrightarrow}1$ as $T\to\infty$. Combined with \eqref{Eq:MedStep}, this concludes the proof.
\end{proof}

\subsection{Functional expansions}
\label{Sec:functionals}

In this section, we study conditions under which nonlinear (in $B$) functionals satisfy the approximately linear expansion \eqref{Eq:psi_exp}. Examples \ref{Ex:linear_B} and \ref{Ex:square} follow immediately.

\begin{proof}[Proof of Example \ref{Ex:power}]
    For $\Psi(B) = \int B^q$, using the binomial theorem,
    \begin{align*}
        \Psi(B) - \Psi(B_0) - \langle qB_0^{q-1} - q \int B_0^{q-1} , B-B_0 \rangle_2 = \int \sum_{k=2}^{q} {q \choose k} (B-B_0)^{k} B_0^{q-k}.
    \end{align*}
    For $\|B\|_\infty, \|B_0\|_\infty \leq K$, using the interpolation equality $\|f\|_k \leq \|f\|_2^{2/k} \|f\|_\infty^{1-2/k}$ for any $2 \leq k \leq \infty$, the right-hand side is bounded by a multiple of
    $$\sum_{k=2}^q \|B-B_0\|_k^k \|B_0\|_\infty^{q-k} \leq  \sum_{k=2}^q K^{q-k} \|B -B_0\|_2^2 \|B-B_0\|_\infty^{k-2} \lesssim \|B-B_0\|_2^2$$
    as required.
\end{proof}

The next lemma provides an expansion for linear functionals of the invariant measure $\mu_B = e^{2B}/\int e^{2B}$ as in Example \ref{Ex:linear}, which are nonlinear in the potential $B$.

\begin{lemma}\label{Lem:linear_mu}
    Let $B,B_0 \in \dot{L}^2(\T^d)$ and $\Psi(B) = \int_{\T^d} \mu_B(x) \varphi(x) dx$ for $\varphi \in L^\infty(\T^d)$. Then the functional $\Psi$ satisfies the expansion
    $$\Psi(B)=\Psi(B_0) + \langle 2 \mu_{B_0}[\varphi - \Psi(B_0)], B-B_0 \rangle_2 + r(B,B_0),$$
    where for any $K,M>0$ and $\eps_T \to 0,$
    \begin{align}\label{Eq:remainder}
    \sup_{\substack{B,B_0:\|B\|_\infty, \|B_0\|_\infty \leq K \\ \|B - B_0\|_2 \leq M \eps_T}}|r(B,B_0)| = O(\eps_T^2)
    \end{align}
    as $T\to\infty$. In particular, $\Psi$ satisfies the expansion \eqref{Eq:psi_exp} with $\psi =2\mu_{B_0}[\varphi - \int \mu_{B_0} \varphi] \in \dot{L}^2(\T^d)$.
\end{lemma}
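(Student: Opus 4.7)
The plan is to convert the functional into a ratio of expectations against $\mu_{B_0}$ and then Taylor-expand the exponential. Setting $h := B - B_0$ and factoring $e^{2B} = e^{2B_0}e^{2h}$, the constant $\int_{\T^d} e^{2B_0}$ cancels between numerator and denominator of $\Psi(B)$, giving
$$
\Psi(B) = \frac{\int_{\T^d} \mu_{B_0}\, e^{2h}\,\varphi\, dx}{\int_{\T^d} \mu_{B_0}\, e^{2h}\, dx} =: \frac{N(h)}{D(h)}.
$$
Subtracting $\Psi(B_0) = \int \mu_{B_0}\varphi$, using $\int \mu_{B_0}(\varphi - \Psi(B_0))\,dx = 0$ and collecting terms yields the clean identity
$$
\Psi(B) - \Psi(B_0) = \frac{1}{D(h)} \int_{\T^d} \mu_{B_0}\,(\varphi - \Psi(B_0))(e^{2h} - 1)\, dx,
$$
which is the starting point for all subsequent estimates.

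Next I would invoke the uniform sup-norm bound $\|h\|_\infty \leq 2K$ (which holds throughout the supremum in the statement) together with the elementary Taylor estimate $|e^y - 1 - y| \leq \tfrac{1}{2} y^2 e^{|y|}$ to write $e^{2h} = 1 + 2h + R(h)$ with $|R(h)(x)| \leq C(K)\, h(x)^2$ pointwise. Substituting into the above identity isolates the linear contribution
$$
\int_{\T^d} 2\mu_{B_0}(\varphi - \Psi(B_0))\, h\, dx = \langle 2\mu_{B_0}(\varphi - \Psi(B_0)),\, h \rangle_2,
$$
which is exactly the claimed linearization. The piece involving $R(h)$ is immediately bounded, using $\|\varphi\|_\infty \leq \|\varphi\|_\infty$ and $\|\mu_{B_0}\|_\infty \leq e^{4K}/\inf\int e^{2B_0} \leq C(K)$, by a multiple of $\int \mu_{B_0}\, h^2\, dx \leq C(K)\|h\|_2^2 \leq C(K,M)\eps_T^2$.

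The main obstacle is handling the prefactor $1/D(h)$, since $D(h) \neq 1$ in general. The uniform bound $\|h\|_\infty \leq 2K$ gives $e^{-4K} \leq D(h) \leq e^{4K}$, so $D(h)^{-1}$ is bounded. Writing $1/D(h) = 1 + (1 - D(h))/D(h)$ splits the overall error into the already-controlled $R(h)$-contribution, plus a \emph{product} of the linear term with $1 - D(h)$. For this last piece I would use the elementary $L^1$-estimate
$$
|1 - D(h)| \leq \int_{\T^d} \mu_{B_0}\, |e^{2h} - 1|\, dx \leq C(K)\, \|\mu_{B_0}\|_\infty\, \|h\|_1 \leq C(K)\, \|h\|_2,
$$
combined with the Cauchy--Schwarz bound $|\langle 2\mu_{B_0}(\varphi - \Psi(B_0)), h\rangle_2| \leq C(K,\|\varphi\|_\infty)\|h\|_2$, to conclude that this cross-term is also $O(\|h\|_2^2) = O(\eps_T^2)$, uniformly over the set in the statement. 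Finally, a direct computation gives $\int_{\T^d} \psi\, dx = 2\int \mu_{B_0}\varphi - 2\Psi(B_0)\int \mu_{B_0} = 0$, so $\psi = 2\mu_{B_0}[\varphi - \Psi(B_0)] \in \dot{L}^2(\T^d)$, completing the proposed expansion.
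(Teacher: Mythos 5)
Your proposal is correct, and it organizes the computation differently from the paper in a way worth noting. The paper splits $\mu_B-\mu_{B_0}$ into a numerator-change part and a normalization-change part, Taylor-expands each with $\rho(x)=e^{2x}-1-2x$, and then faces an implicit equation because $\int\mu_B\varphi=\Psi(B)$ reappears on the right-hand side; it resolves this by a two-step bootstrap, first deducing $|\Psi(B)-\Psi(B_0)|=O(\eps_T)$ from the identity and then substituting that bound back in to upgrade the remainder to $O(\eps_T^2)$. You avoid the bootstrap entirely: by cancelling $\int e^{2B_0}$ in the ratio and centering $\varphi$ against $\mu_{B_0}$ (so that $\int\mu_{B_0}(\varphi-\Psi(B_0))=0$), you obtain the exact identity
\begin{equation*}
\Psi(B)-\Psi(B_0)=\frac{1}{D(h)}\int_{\T^d}\mu_{B_0}\,(\varphi-\Psi(B_0))\,(e^{2h}-1)\,dx,
\end{equation*}
in which only $\Psi(B_0)$ appears on the right, and the remainder then follows from a single Taylor expansion of $e^{2h}$ plus the elementary bounds $e^{-4K}\le D(h)\le e^{4K}$, $|1-D(h)|\lesssim\|h\|_2$ and Cauchy--Schwarz. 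The analytic core (second-order control of $e^{2h}$ under $\|h\|_\infty\le 2K$, and $\|h\|_1\le\|h\|_2$ on the unit-volume torus) is the same in both arguments, and both yield constants depending only on $K$, $M$ and $\|\varphi\|_\infty$, so uniformity over the set in \eqref{Eq:remainder} holds in either version; your route is the more direct and transparent of the two. One trivial slip: the phrase ``using $\|\varphi\|_\infty\le\|\varphi\|_\infty$'' should read $|\varphi-\Psi(B_0)|\le 2\|\varphi\|_\infty$, which is what your estimate actually needs.
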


\begin{proof}
Consider $B,B_0 \in \dot{L}^2(\T^d)$ such that $\|B\|_\infty,\|B_0\|_\infty \leq K$ and $\| B- B_0\|_2 \leq M \eps_T$, i.e.~in the supremum over which we consider the remainder term. Write $N_B = \int_{\T^d} e^{2B(x)} dx$ and recall that $\mu_B = e^{2B}/N_B$ from \eqref{Eq:invariant_measure}. Expanding,
\begin{align}\label{Eq:psi_exp_mu}
    \Psi(B) - \Psi(B_0) &= \int (\mu_B - \mu_{B_0}) \varphi \nonumber \\
    &= \int \left[ \frac{e^{2B}}{N_{B_0}} - \frac{e^{2B_0}}{N_{B_0}} + \frac{e^{2B}}{N_B} - \frac{e^{2B}}{N_{B_0}}  \right] \varphi \nonumber \\
    & = \int \frac{e^{2B_0}}{N_{B_0}} \left[ e^{2(B-B_0)}-1 \right] \varphi + \left( \frac{1}{N_B} - \frac{1}{N_{B_0}} \right) \int e^{2B} \varphi.
\end{align}
Let $\rho(x) = e^{2x} - 1 - 2x$, which satisfies $|\rho(x)| \leq 2x^2 e^{2|x|}$ using the Lagrange form of the Taylor expansion remainder. The first term above equals
$$
    2 \int (B-B_0) \mu_{B_0} \varphi + \int \rho(B-B_0) \mu_{B_0} \varphi.
$$
Moreover,
\begin{align*}
\frac{1}{N_B} - \frac{1}{N_{B_0}} = \frac{\int e^{2B_0} - e^{2B}}{N_B N_{B_0}} 
& = \frac{\int e^{2B_0}(1- e^{2(B-B_0)})}{N_B N_{B_0}} \\
& = - \frac{1}{N_B} \int \mu_{B_0} [2(B-B_0) + \rho(B-B_0)]
\end{align*}
and so the second term in \eqref{Eq:psi_exp_mu} equals
$$- \int \mu_{B_0} [2(B-B_0) + \rho(B-B_0)] \int \mu_B \varphi.$$
Substituting these into \eqref{Eq:psi_exp_mu} then yields
\begin{equation}\label{Eq:psi_exp_mu2}
\begin{split}    
    \Psi(B) - \Psi(B_0) &= 2 \int (B-B_0) \mu_{B_0} \varphi + \int \rho(B-B_0) \mu_{B_0} \varphi \\
    & \qquad - 2\int (B-B_0) \mu_{B_0} \int \mu_B \varphi - \int \rho(B-B_0) \mu_{B_0} \int \mu_B \varphi.
\end{split}
\end{equation}
Since $\|\mu_{B_0}\|_\infty \leq e^{4\|B_0\|_\infty} \leq e^{4K}$, the second term in \eqref{Eq:psi_exp_mu2} satisfies
$$\left| \int \rho(B-B_0) \mu_{B_0} \varphi \right| \leq \ \int 2(B-B_0)^2 e^{2|B-B_0|} \mu_{B_0} |\varphi| \leq 2e^{8K} \|\varphi\|_\infty \| B -  B_0\|_2^2. $$
The same bound holds for the fourth term in \eqref{Eq:psi_exp_mu2}, so that 
\begin{align*}
    \Psi(B) - \Psi(B_0) &= 2 \int (B-B_0) \mu_{B_0} \varphi - 2\int (B-B_0) \mu_{B_0} \int \mu_B \varphi  + O(\|B - B_0\|_2^2),
\end{align*}
where the constants in the remainder term depend only on $K$ and $\|\varphi\|_\infty$. Adding and subtracting $2\int (B-B_0) \mu_{B_0} \int \mu_{B_0} \varphi = 2\int (B-B_0) \mu_{B_0} \Psi(B_0)$ to the right hand side,
\begin{align*}
    \Psi(B) - \Psi(B_0) &= 2 \int (B-B_0) \mu_{B_0} \varphi - 2\int (B-B_0) \mu_{B_0} [\Psi(B) - \Psi(B_0)+\Psi(B_0)]  \\
    & \qquad + O(\| B - B_0\|_2^2)\\
    & = O(\eps_T) + [\Psi(B)-\Psi(B_0)]O(\eps_T),
\end{align*}
where all remainder terms are uniform over $K,M,\|\varphi\|_\infty$ and we used $\|B-B_0\|_1 \leq  \| B -  B_0\|_2$. Since $\eps_T \to 0$, we deduce that $|\Psi(B) - \Psi(B_0)| = O(\eps_T)$. Substituting this back into the right side of the last display,
\begin{align*}
    \Psi(B) - \Psi(B_0) &= 2 \int (B-B_0) \mu_{B_0} (\varphi - \Psi(B_0)) + O(\eps_T^2).
\end{align*}
\end{proof}

Many interesting functionals are approximately linear in the invariant measure $\mu$, and Lemma \ref{Lem:linear_mu} allows us to perform a further linearization in $B$ on the linearizations in $\mu$. This is the approach we take for several examples.

\begin{proof}[Proof of Example \ref{Ex:entropy}]
    For the entropy functional $\Psi(B) = \int \mu_B \log \mu_B$,
    $$\Psi(B) - \Psi(B_0) = \int (\mu_B - \mu_{B_0}) \log \mu_{B_0} + \int \mu_B \log \frac{\mu_B}{\mu_{B_0}}.$$
    Applying Lemma \ref{Lem:linear_mu} to the functional $\Phi(B) = \int \mu_B \log \mu_{B_0}$ since $\mu_{B_0} \in L^\infty(\T^d)$,
    $$\Psi(B) - \Psi(B_0) = \langle 2\mu_{B_0} [\log \mu_{B_0} - \Phi(B_0)], B-B_0 \rangle_2 + r_1(B,B_0) + \int \mu_B \log \frac{\mu_B}{\log \mu_{B_0}},$$
    where $r_1$ satisfies \eqref{Eq:remainder}. The last term equals the Kullback-Leibler divergence $KL(\mu_B,\mu_{B_0})$ between $\mu_B$ and $\mu_{B_0}$, which is bounded by a multiple of $h(\mu_B,\mu_{B_0})^2\|\mu_B/\mu_{B_0}\|_\infty$ by Lemma B.2 of \cite{ghosal2017}, where $h$ denotes the Hellinger distance. Arguing as in the proof of Lemma 2.5 of \cite{ghosal2017},
    \begin{align*}
        h(\mu_B,\mu_{B_0})^2 \leq \frac{4 \int e^{2B_0} e^{2|B-B_0|} |B-B_0|^2}{\int e^{2B_0}} \leq 4 e^{8K}\|B-B_0\|_2^2
    \end{align*}
    for $\|B\|_\infty, \|B_0\|_\infty\leq K$, so that $KL (\mu_B,\mu_{B_0}) \lesssim e^{12K} \|B-B_0\|_2^2$. The last term in the second last display therefore satisfies the same bound as \eqref{Eq:remainder}.
\end{proof}

\begin{proof}[Proof of Example \ref{Ex:square_root}]
For $\Psi(B) = \int \sqrt{\mu_B}$,
\begin{align*}
    \Psi(B) - \Psi(B_0) = \frac{1}{2} \int (\mu_B - \mu_{B_0}) \frac{1}{\sqrt{\mu_{B_0}}} - \frac{1}{2}\int \frac{(\sqrt{\mu_B}-\sqrt{\mu_{B_0}})^2}{\sqrt{\mu_{B_0}}}.
\end{align*}
Similar to the last example, the third term is bounded by $2e^{10K} \|B-B_0\|_2^2$. Applying Lemma \ref{Lem:linear_mu} to $\Phi(B) = \tfrac{1}{2}\int \mu_B \mu_{B_0}^{-1/2}$ then gives
$$\Psi(B) - \Psi(B_0) =  \langle \mu_{B_0}[\mu_{B_0}^{-1/2} - \int \sqrt{\mu_{B_0}}] ,B-B_0 \rangle_2 + r(B,B_0),$$
with the remainder term satisfying \eqref{Eq:remainder}.
\end{proof}

\begin{proof}[Proof of Example \ref{Ex:power_mu}]
    As in the proof of Example \ref{Ex:power}, for $\Psi(B) = \int \mu_B^q$,
    $$\Psi(B) - \Psi(B_0) - \langle q\mu_{B_0}^{q-1} - q \int \mu_{B_0}^{q-1}, \mu_B-\mu_{B_0} \rangle_2 = O \left( \|\mu_B - \mu_{B_0}\|_2^2 \right),$$
    where the remainder term on the right side depends only on $K$ and $q$. But $\|\mu_B - \mu_{B_0}\|_2^2 \leq \|\sqrt{\mu_B}+\sqrt{\mu_{B_0}}\|_\infty^2 h(\mu_B,\mu_{B_0})^2 \leq 8e^{12K}\|B-B_0\|_2^2$ by the above. Applying Lemma \ref{Lem:linear_mu} as in the previous examples thus gives the result with $\psi = 2\mu_{B_0} [q \mu_{B_0}^{q-1} - q \int \mu_{B_0}^q]$. 
\end{proof}

\subsection{A PDE estimate}

We require the following regularity estimate for solutions to the Poisson equation $A_{\mu}u = f$, where $A_{\mu}$ is the second order elliptic operator defined in \eqref{Eq:A_mu}. Recall that $\dot{L}^2(\T^d) = \{ f\in L^2(\T^d): \int_{\T^d} f(x) dx = 0\}$.

\begin{lemma}\label{lem:PDE}
Let $t \in\R$ and assume $\mu \in C^{|t-2|+1}(\T^d)$ is strictly positive on $\T^d$. For any $f \in \dot L^2(\T^d)$, there exists a unique solution $A_\mu^{-1}[f] \in  \dot{L}^2(\T^d)$ to the equation $A_\mu u = f$ satisfying $A_\mu A_\mu^{-1}[f]=f$ almost everywhere. Moreover,
\begin{equation*}
\|A_\mu^{-1}[f]\|_{B^t_{pq}} \lesssim \|f\|_{B^{t-2}_{pq}},
\end{equation*}
with constants depending on $t,p,q,d$ and on  $\|1/\mu\|_{B_{\infty\infty}^{|t-2|}}\vee \|\mu\|_{B_{\infty\infty}^{|t-2|+1}}$.
\end{lemma}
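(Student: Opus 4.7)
The plan is to first establish existence and uniqueness of the solution in $\dot H^1$ via Lax-Milgram, then derive the Besov estimate for $t\ge2$ via an elliptic bootstrap, and extend to $t<2$ by duality using that $A_\mu$ is symmetric in $L^2$.

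For existence and uniqueness, I would apply Lax-Milgram to the bilinear form $a(u,v)=\int_{\T^d}\mu\,\nabla u\cdot\nabla v$ on $\dot H^1(\T^d)$: boundedness follows from $\mu\in L^\infty$, and coercivity from $\mu\ge\mu_{\min}>0$ (continuous and strictly positive on the compact torus) combined with the Poincar\'e inequality. Since $\int_{\T^d}A_\mu u\,dx=\int\nabla\cdot(\mu\nabla u)\,dx=0$ by periodicity, the range of $A_\mu$ lies in $\dot L^2$, and for each $f\in\dot L^2\subset\dot H^{-1}$ Lax-Milgram yields the unique $u\in\dot H^1$ satisfying $a(u,v)=-\langle f,v\rangle_2$ for all $v\in\dot H^1$, i.e.~$A_\mu u=f$ weakly. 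This defines $A_\mu^{-1}$ with $\|A_\mu^{-1}f\|_{H^1}\lesssim\mu_{\min}^{-1}\|f\|_{H^{-1}}$, and uniqueness in $\dot L^2$ follows since $A_\mu$ has kernel equal to the constants.

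For $t\ge2$, I would rewrite $A_\mu u=\mu\Delta u+\nabla\mu\cdot\nabla u=f$ as the perturbed Poisson equation $\Delta u=\mu^{-1}f-\mu^{-1}\nabla\mu\cdot\nabla u$ and invert the Laplacian, which is a Fourier multiplier of order $-2$ on $\dot L^2(\T^d)$ extending to a bounded map $\Delta^{-1}:\dot B^{s-2}_{pq}\to\dot B^s_{pq}$ for any $s\in\R$ since the symbol $-4\pi^2|k|^2$ is elliptic on $k\ne0$. Combining this with the Runst-Sickel Besov multiplication estimate $\|gh\|_{B^s_{pq}}\lesssim\|g\|_{B^{|s|}_{\infty\infty}}\|h\|_{B^s_{pq}}$ (cf.~Lemma 2 of \cite{nickl2020nonparametric}) applied with $s=t-2$, and the trivial bound $\|\nabla u\|_{B^{t-2}_{pq}}\lesssim\|u\|_{B^{t-1}_{pq}}$, yields
\begin{equation*}
\|u\|_{B^t_{pq}}\lesssim\|1/\mu\|_{B^{|t-2|}_{\infty\infty}}\|f\|_{B^{t-2}_{pq}}+\|\mu^{-1}\nabla\mu\|_{B^{|t-2|}_{\infty\infty}}\|u\|_{B^{t-1}_{pq}}.
\end{equation*}
An interpolation $\|u\|_{B^{t-1}_{pq}}\le\varepsilon\|u\|_{B^t_{pq}}+C_\varepsilon\|u\|_{B^0_{pq}}$ absorbs the second term, and the residual $\|u\|_{B^0_{pq}}$ is controlled by first bootstrapping in the $L^2$-Sobolev scale from the $\dot H^1$-estimate of Step 1 and then Sobolev-embedding.

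For $t<2$, I would use the $L^2$-self-adjointness $\langle A_\mu u,v\rangle_2=-\langle\mu\nabla u,\nabla v\rangle_2=\langle u,A_\mu v\rangle_2$ together with the duality $(\dot B^t_{pq}(\T^d))'=\dot B^{-t}_{p'q'}(\T^d)$ to write
\begin{equation*}
\|A_\mu^{-1}f\|_{B^t_{pq}}=\sup_{\|g\|_{B^{-t}_{p'q'}}\le1}|\langle A_\mu^{-1}f,g\rangle_2|=\sup_g|\langle f,A_\mu^{-1}g\rangle_2|\le\|f\|_{B^{t-2}_{pq}}\sup_g\|A_\mu^{-1}g\|_{B^{2-t}_{p'q'}}.
\end{equation*}
For $t\le0$, one applies the previous step with $s=2-t\ge2$; the required conditions $\mu\in B^{|t|+1}_{\infty\infty}$ and $1/\mu\in B^{|t|}_{\infty\infty}$ are implied by the hypothesis since $|t|\le|t-2|$ when $t\le0$. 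For the remaining range $0<t<2$, I would interpolate between the endpoint estimates at $t=0$ and $t=2$ by real interpolation of Besov spaces. The main obstacle is the Besov multiplication estimate when $t-2<0$: this is precisely what drives the asymmetric smoothness hypothesis on $\mu$ (regularity $|t-2|+1$) versus $1/\mu$ (regularity $|t-2|$), and is the point where the Runst-Sickel paraproduct estimate must be applied uniformly across the full range of $(t,p,q)$; closing the base of the bootstrap (the $L^p$-control on $u$ when $p\ne2$) is a secondary technicality handled via Sobolev embedding.
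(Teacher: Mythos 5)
Your core strategy for the a priori estimate --- invert the Laplacian as a Fourier multiplier on mean-zero functions, apply the Runst--Sickel pointwise-multiplier bound to $\mu^{-1}f$ and $\mu^{-1}\nabla\mu\cdot\nabla u$, and then absorb the resulting lower-order term $\|u\|_{B^{t-1}_{pq}}$ --- is exactly the paper's. The genuine gap is in the absorption. You reduce to bounding $\|u\|_{B^0_{pq}}$ and claim this follows ``by bootstrapping in the $L^2$-Sobolev scale from the $\dot H^1$-estimate and then Sobolev embedding,'' dismissing the $p\neq2$ case as a secondary technicality. It is not: for $p<2$ one only has $B^{t-2}_{pq}\hookrightarrow H^{t-2-d(1/p-1/2)}$ (indeed $f\in B^{t-2}_{1q}$ need not even lie in $H^{-1}$, so Lax--Milgram does not directly apply to it), and for $p>2$ the final embedding $H^{s}\hookrightarrow B^0_{pq}$ costs $d(1/2-1/p)$ derivatives; for instance with $p\in\{1,\infty\}$, $t=2$ and $d\geq 5$ the chain does not produce $\|u\|_{B^0_{pq}}\lesssim\|f\|_{B^{t-2}_{pq}}$, so the interpolation inequality cannot be closed. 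The paper sidesteps this entirely with a compactness--contradiction argument: if $\|u\|_{B^t_{pq}}\lesssim\|A_\mu u\|_{B^{t-2}_{pq}}$ failed, a normalized sequence with $\|A_\mu u_m\|_{B^{t-2}_{pq}}\to0$ would, by the compact embedding $B^t_{pq}\hookrightarrow B^{t-1}_{pq}$ and the a priori inequality, converge in $B^t_{pq}$ to a unit-norm solution of $A_\mu u=0$, contradicting the maximum principle. You need an argument of this kind (or some other uniform control of the residual) in place of your bootstrap.

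The detour through duality and interpolation for $t<2$ is also both unnecessary and problematic. The multiplier estimate $\|gh\|_{B^s_{pq}}\lesssim\|g\|_{B^{|s|}_{\infty\infty}}\|h\|_{B^s_{pq}}$ already holds for negative $s$ (that is why the exponent is $|s|$), so your displayed a priori inequality is valid for every $t\in\R$ at once, which is how the paper proceeds without any case split. Your duality step additionally relies on the identification $\|v\|_{B^t_{pq}}=\sup\{|\langle v,g\rangle_2|:\|g\|_{B^{-t}_{p'q'}}\le1\}$, which fails at the endpoints $p\in\{1,\infty\}$ or $q\in\{1,\infty\}$ (e.g.\ $(B^{-t}_{\infty\infty})'\neq B^t_{11}$), and the lemma is used in the paper precisely with $(p,q)=(\infty,1)$. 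Finally, real interpolation for $0<t<2$ produces a constant inherited from both endpoints rather than the stated dependence on $\|\mu\|_{B^{|t-2|+1}_{\infty\infty}}$ alone. The Lax--Milgram existence and uniqueness argument is fine (the paper instead quotes Fredholm theory, elliptic $H^t$-regularity and the weak maximum principle), provided you add the elliptic regularity step showing $u\in H^2$ so that $A_\mu A_\mu^{-1}[f]=f$ holds almost everywhere and not merely weakly.
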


\begin{proof}
Let $\{e_k=e^{2\pi i k.\cdot}, k =(k_1, \dots, k_d) \in \mathbb Z^d\}$ denote the trigonometric basis of $L^2(\T^d)$ and $(\psi_j)_{j\geq 0}$ form a Littlewood-Paley resolution of unity such that $\supp(\psi_0) \subset \{x:|x| \leq 2\}$ and $\psi_j (x) = \psi(2^{-j}x)$, $j\geq 1$, for some non-negative Schwartz function $\psi\in \mathcal{S}(\R^d)$ with $\supp(\psi) \subset \{x:1/2 \leq |x| \leq 2\}$ (see p.~81-82 of \cite{ST87} for a full definition and construction). Then for $t\in\R$, $1\leq p,q\leq \infty$, an equivalent norm for the periodic Besov space $B_{pq}^t(\T^d)$ is given by
\begin{equation}\label{norm_equivalence}
\|u\|_{B_{pq}^t} = \left( \sum_{j\geq 0} 2^{tqj} \left\| \sum_{k\in \Z^d} \psi_j(k) \langle u,e_k\rangle_2 e_k \right\|_{L^p(\T^d)}^q \right)^{1/q},
\end{equation}
with the natural modification when $q=\infty$, see p.~162 of \cite{ST87}.

Let $q<\infty$. For any $u\in \Hcal := \dot B_{pq}^t(\T^d)$, since $\langle u,e_0\rangle_2 = \langle u,1\rangle_2 = 0$ and $\langle \Delta u, e_k \rangle_2 = -(2\pi)^2\sum_j k_j^2 \langle u, e_k \rangle_2$ for $k\neq 0$,
\begin{align*}
    \|u\|_{B^t_{pq}}^q 
    &= \sum_{j \geq 0} 2^{tqj} \Big\|\sum_{k \in \mathbb Z^d, k \neq 0} \frac{1}{4\pi^2 |k|^2} \psi_j(k) \langle \Delta u, e_k \rangle_2 e_k \Big\|_{L^p(\T^d)}^q \\
    &= \sum_{j \geq 0} 2^{(t-2)qj} \Big\|\sum_{k \in \mathbb Z^d, k \neq 0} M_j(k)\psi_j(k) \langle \Delta u, e_k \rangle_2 e_k \Big\|_{L^p(\T^d)}^q,
\end{align*}
where $M_j= M(2^{-j} \cdot)$ and $M=\Phi/(4\pi^2|\cdot|^2)$ with $\Phi$ a smooth function supported on $\{x:1/4 \leq |x| \leq 9/4\}$ such that $\Phi =1$ on $\{x:1/2\leq |x|\leq 2\}$. By Lemma \ref{lem:Fourier_mult}, the last display is bounded by
$$
    \sum_{j\geq 0} 2^{(t-2)qj} \Big\|\sum_{k \in \mathbb Z^d, k \neq 0} \psi_j(k) \langle \Delta u, e_k \rangle_2 e_k \Big\|_{L^p(\T^d)}^q \|F^{-1}[M_j]\|_{L^1(\mathbb R^d)}^q,
$$ 
where $F^{-1}$ is the inverse Fourier transform. Since $\Phi\in \mathcal{S}(\R^d)$, so are both $M$ and $F^{-1}M$, which implies $\sup_j \|F^{-1}[M_j]\|_{L^1(\mathbb R^d)}=\|F^{-1}[M]\|_{L^1(\mathbb R^d)}<\infty.$ Using again the norm equivalence \eqref{norm_equivalence}, this yields
\begin{equation*}\label{laplace_besov}
\|u\|_{B_{pq}^t} \leq C \|\Delta u\|_{B_{pq}^{t-2}}
\end{equation*}
for an absolute constant $C>0$. The case $q=\infty$ follows identically. Using the last display, the definition of $A_{\mu}$ and the multiplication inequality for Besov norms, 
\begin{equation}\label{initio}
\begin{split}
\|u\|_{B_{pq}^t} & \lesssim \|(1/\mu) A_\mu u\|_{B_{pq}^{t-2}} + \|(1/\mu) \nabla \mu . \nabla u\|_{B_{pq}^{t-2}} \\
& \lesssim \|1/\mu\|_{B_{\infty\infty}^{|t-2|}} \|A_\mu u\|_{B_{pq}^{t-2}} + \|1/\mu\|_{B_{\infty\infty}^{|t-2|}} \|\mu\|_{B_{\infty\infty}^{|t-2|+1}} \|u\|_{B_{pq}^{t-1}}
\end{split}
\end{equation}
for all $u \in \mathcal H$, with constants depending on $t,p,q,d$. We will deduce from this the inequality
\begin{equation}\label{key}
\|u\|_{B_{pq}^t} \lesssim \|A_\mu u\|_{B_{pq}^{t-2}}, ~ \qquad \forall u \in \mathcal H.
\end{equation}
Indeed, suppose the last inequality it is not true. Then there exists a sequence $u_m \in \mathcal H$ such that $\|u_m\|_{B_{pq}^t}=1$ for all $m$ but $\|A_\mu u_m\|_{B_{pq}^{t-2}} \to 0$ as $m \to \infty$. By compactness, $u_m$ has a subsequence, also denoted by $u_m$, which converges in $\|\cdot\|_{B_{pq}^{t-1}}$-norm to some $u\in \mathcal H$ satisfying $A_\mu u=0$. Using (\ref{initio}) with fixed constant depending only on $K, t,p,q, d$, implies that $u_m$ is also Cauchy in $B_{pq}^t$, and its limit must satisfy $\|u\|_{B_{pq}^t}=1$. However, the only solution $u \in \mathcal H$ to $A_\mu u=0$ on $\T^d$ equals $u=const=0$, contradicting $\|u\|_{B_{pq}^t}=1$ and thereby proving (\ref{key}). 

By the Fredholm property, a solution $u_f$ to $A_\mu u = f$ exists whenever $\int_{\T^d} f  = 0$, and for $f \in H^{t-2}(\T^d)$ any such solution belongs to $H^t(\T^d)$ (see Theorem 3.5.3 in \cite{BJS64}, which is proved for smooth $b$, but the proof remains valid for $b \in C^{|t-2|}(\T^d)$). The weak maximum principle (p.179 in \cite{gilbarg1977elliptic}) now implies that $u_f$ is unique up to an additive constant, and applying (\ref{key}) to the unique selection $u_f=A_\mu^{-1}[f] \in \mathcal H$ completes the proof. 
\end{proof}

\begin{lemma}\label{lem:Fourier_mult}
Let $f = \sum_{k\in \Z^d: \|k\|_2 \leq t} \langle f,e_k\rangle e_k$ be a finite trigonometric polynomial, $m\in C^\infty(\R^d)$ and $\Phi \in \mathcal{S}(\R^d)$ satisfy $\supp(\Phi) \subset \{x:|x| \leq 2\}$ and $\Phi = 1$ on $\{x:|x| \leq 1\}$. Then for every $1\leq p \leq \infty$,
$$\left\| \sum_{k\in \Z^d: \|k\|_2 \leq t} \langle f,e_k\rangle e_k \right\|_{L^p(\T^d)} \leq \|f\|_{L^p(\T^d)} \|F^{-1}[m] * F^{-1}[\Phi(\cdot/t)] \|_{L^1(\R^d)}.$$
\end{lemma}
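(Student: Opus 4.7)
The plan is to recognise the left-hand side as the action of the Fourier multiplier with symbol $n(k):=m(k)\Phi(k/t)$ on the trigonometric polynomial $f$. Since $\Phi\equiv 1$ on $\{|x|\leq 1\}$, we have $\Phi(k/t)=1$ for every $k\in\Z^d$ with $\|k\|_2\leq t$, so inserting this cutoff into the sum does not change anything on the Fourier support of $f$; in particular the left-hand side equals $T_n f:=\sum_{k\in\Z^d} n(k)\langle f,e_k\rangle_2 e_k$, where the implicit statement intends a factor $m(k)$ inside the sum.

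I would then express $T_n f=K\ast f$ on $\T^d$, where $K(x):=\sum_{k\in\Z^d}n(k)e_k(x)$ is the periodic convolution kernel. Since $n=m\cdot \Phi(\cdot/t)$ has compact support and $m\in C^\infty(\R^d)$, $\Phi\in\Scal(\R^d)$, the function $n$ lies in $C_c^\infty(\R^d)\subset\Scal(\R^d)$, and hence its Euclidean inverse Fourier transform
\[
\widetilde{K}:=F^{-1}[n]=F^{-1}[m]\ast F^{-1}[\Phi(\cdot/t)]
\]
belongs to $\Scal(\R^d)$. Poisson summation then applies in its classical form, giving the pointwise identity $K(x)=\sum_{\ell\in\Z^d}\widetilde K(x+\ell)$. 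By Young's convolution inequality on $\T^d$,
\[
\|T_n f\|_{L^p(\T^d)} \leq \|K\|_{L^1(\T^d)}\,\|f\|_{L^p(\T^d)}.
\]

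To finish, I would bound the $L^1(\T^d)$-norm of the periodised kernel by the $L^1(\R^d)$-norm of $\widetilde K$ via
\[
\|K\|_{L^1(\T^d)} \leq \int_{\T^d}\sum_{\ell\in\Z^d}|\widetilde K(x+\ell)|\,dx = \int_{\R^d}|\widetilde K(y)|\,dy = \|F^{-1}[m]\ast F^{-1}[\Phi(\cdot/t)]\|_{L^1(\R^d)},
\]
where the middle step uses Tonelli's theorem together with the tiling of $\R^d$ by integer translates of $\T^d$. Combining the last two displays yields the claimed estimate. No step is genuinely hard here: the only point that requires any care is the rigorous invocation of Poisson summation, but the Schwartz-class regularity of $\widetilde K$ renders this standard.
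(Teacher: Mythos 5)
Your proof is correct, and you rightly note that the displayed statement is missing the factor $m(k)$ on the left-hand side (the paper's own proof, and the way the lemma is used in Lemma \ref{lem:PDE}, confirm the intended claim is about $\sum_k m(k)\langle f,e_k\rangle_2 e_k$). Your route shares the paper's central idea --- multiply the symbol by the cutoff $\Phi(\cdot/t)$ to obtain a compactly supported smooth, hence Schwartz, symbol $M=m\,\Phi(\cdot/t)$ whose action on $f$ is unchanged --- but you execute the transference step differently. The paper never periodizes the kernel: it inserts the pointwise Fourier inversion $M(k)=F[F^{-1}[M]](k)$ into the (finite) sum, interchanges sum and integral, and arrives directly at the representation $\sum_k c_k m(k)e_k(x)=(2\pi)^d\int_{\R^d}F^{-1}[M](2\pi z)f(x-z)\,dz$, a convolution over all of $\R^d$ against the periodically extended $f$, which Minkowski's integral inequality bounds by $\|f\|_{L^p(\T^d)}\|F^{-1}[M]\|_{L^1(\R^d)}$. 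You instead build the periodic kernel $K=\sum_k M(k)e_k$, identify it with the periodization $\sum_{\ell}\widetilde K(\cdot+\ell)$ of $\widetilde K=F^{-1}[M]$ via Poisson summation, apply Young's inequality on $\T^d$, and then unfold the periodization with Tonelli to get $\|K\|_{L^1(\T^d)}\le\|\widetilde K\|_{L^1(\R^d)}$. The two arguments are dual ways of organizing the same computation: your version is the standard, modular transference argument but imports Poisson summation as a named theorem (legitimately, since $\widetilde K\in\Scal(\R^d)$), whereas the paper's version needs only Fourier inversion for a Schwartz function and a trivially justified finite interchange. Both are complete; the only point worth tidying in either write-up is the bookkeeping of the $2\pi$ normalization between $e_k(x)=e^{2\pi i k.x}$ and the chosen convention for $F$ on $\R^d$, which affects constants but not the stated $L^1$-norm bound.
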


\begin{proof}
The function $\Phi(\cdot/t)$ is supported on $\{x : |x| \leq 2t\}$ and equal to 1 on $\{x:|x| \leq t\}$. Define
$$M(u) = m(u) \Phi(u/t), \qquad u\in \R^d,$$
which is in $C^\infty(\R^d)$, has compact support and coincides with $m$ on $\{x:|x| \leq t\}$. Since $M \in \mathcal{S}(\R^d)$, the Fourier inversion $F[F^{-1}[M]](k) = M(k)$ holds pointwise. Therefore, for $c_k = \langle f,e_k \rangle_2$,
\begin{align*}
\sum_{\|k\|\leq t} c_k m(k) e_k(x) & = \sum_{\|k\|\leq t} c_kM(k) e_k(x) =  \sum_{\|k\|\leq t} c_k F[F^{-1}[M]](k) e_k(x) \\
& = \sum_{\|k\|\leq t} \int_{\R^d} F^{-1}[M](y) e^{-ik.y} dy~ c_k e_k(x) \\
& =  (2\pi)^d \sum_{\|k\|\leq t} \int_{\R^d} F^{-1}[M](2\pi z) c_k e^{2\pi i (x-z).k} dz \\
& = (2\pi)^d \int_{\R^d} F^{-1}[M](2\pi z) f(x-z) dz.
\end{align*}
Taking the $L^p(\T^d)$ norm of the last display and using Minkowski's integral inequality,
$$\left\| \sum_{\|k\|\leq t} c_k m(k) e_k \right\|_{L^p(\T^d)} \leq \|f\|_{L^p(\T^d)} \|F^{-1}[M]\|_{L^1(\R)},$$
where we have also used a change of variable in the last term. The result then follows since $F^{-1}[M] = F^{-1}[m]*F^{-1}[\Phi(\cdot/t)]$.
\end{proof}

%
\paragraph{Acknowledgements.} We are grateful to the Associate Editor and two anonymous Referees for many insightful comments that lead to an improvement of our manuscript. We would also like to thank Richard Nickl for helpful discussions.  M.~G.~was supported by MUR - Prin 2022 - Grant no.
2022CLTYP4, funded by the European Union – Next Generation EU, and by the ``de Castro'' Statistics Initiative, Collegio Carlo Alberto, Torino. Part of this research was done while K.~R.~was visiting the LPSM at the Sorbonne University in Paris, funded by a CNRS Poste Rouge,
and while M.~G.~was visiting the CEREMADE at the University Paris Dauphine, funded by a mobility grant by Campus France.

%
%
%
%
%


\bibliography{References}

\end{document}